\def\softd{{\leavevmode\setbox1=\hbox{d}%
          \hbox to 1.05\wd1{d\kern-0.4ex{\char039}\hss}}}
\newcommand{\vla}[1]{{\color{red}{#1}}}
\newcommand{\bla}[1]{{\color{blue}{#1}}}
\newcommand{\mbf}[1]{\mathbf{#1}}
\newcommand*\xbar[1]{%
  \hbox{%
    \vbox{%
      \hrule height 0.5pt 
      \kern0.4ex
      \hbox{%
        \kern-0.05em
        \ensuremath{#1}%
        \kern-0.00em
      }%
    }%
  }%
}
\crefname{hypothesis}{Hypothesis}{Hypotheses}
\title{A Well-balanced Point-Average-Moment PolynomiAl-interpreted (PAMPA) Method for Shallow Water Equations \vla{with Horizontal Temperature Gradients} on Triangular Meshes\thanks{Submitted to the editors DATE.
\funding{The work of Y. Liu was supported by UZH Postdoc Grant, grant no. K-71120-05-01 SNSF grant 200020$\_$204917.}}}
\author{Yongle Liu\thanks{Institute of Mathematics, University of Z\"{u}rich, 8057 Z\"{u}rich, Switzerland
  (\email{yongle.liu@math.uzh.ch; liuyl2017@mail.sustech.edu.cn}).}}
\DeclareMathOperator{\diag}{diag}
\begin{document}

\maketitle

\nolinenumbers

\begin{abstract}
In this paper, we develop a novel well-balanced Point-Average-Moment PolynomiAl-interpreted (PAMPA) numerical method for solving the two-dimensional shallow water equations \vla{with temperature gradients} on unstructured triangular meshes. The proposed PAMPA method use a globally continuous representation of the variables, with degree of freedoms (DoFs) consisting of point values on the edges and average values within each triangular element. The update of cell averages is carried out using a conservative form of the partial differential equations (PDEs), while \vla{the update of point values---unconstrained by local conservation---follows a non-conservative formulation. The powerful PAMPA framework offers great flexibility in the choice of variables for the non-conservative form, including conservative variables, primitive variables, and other possible sets of variables. In order to preserve a wider class of steady-state solutions, we introduce pressure-momentum-temperature variables instead of using the standard conservative or primitive ones. By utilizing these new variables and the associated non-conservative form, along with adopting suitable Gaussian quadrature rules in the discretization of conservative form, we prove that this new class of schemes is well-balanced for both ``lake at rest'' and isobaric steady states. We validate the performance of the proposed well-balanced PAMPA method through a series of numerical experiments, demonstrating their high-order accuracy, well-balancedness, and robustness.} 
\end{abstract}

\begin{keywords}
  \vla{Shallow water equations with temperature gradients}, Well-balanced scheme, Point-Average-Moment PolynomiAl-interpreted (PAMPA) methods, Unstructured triangular meshes, Non-conservative formulation.
\end{keywords}

\begin{MSCcodes}
  76M10, 76M12, 65M08, 35L40
\end{MSCcodes}

\section{Introduction}
This paper focus on introducing a novel well-balanced (WB) Point-Average-Moment PolynomiAl-interpreted (PAMPA) method to solve the two-dimensional (2-D) shallow water equations \vla{with horizontal temperature gradients} on general unstructured triangular meshes \vla{(see, e.g., \cite{Dellar03,Ripa,Ripa95})}:\vla{ 
\begin{equation}\label{1.1}
  \frac{\partial \mbf u}{\partial t}+{\rm div}~\mbf f(\mbf u)=\mbf S,
\end{equation}
where $t$ denotes time, and
\begin{equation}\label{1.1a}
 \mbf u=\begin{pmatrix}h\\h\bm\nu\\h\theta\end{pmatrix}, \quad \mbf f(\mbf u)=\begin{pmatrix}h\bm\nu\\h\bm\nu\bigotimes\bm\nu+\frac{g}{2}h^2\theta{\rm Id}_d\\h\bm\nu\theta\end{pmatrix},\quad \mbf S=\begin{pmatrix}0\\-gh\theta\nabla Z\\0\end{pmatrix}
\end{equation}
represent the vector of conservative variables, flux tensor, and source term, respectively. In \eqref{1.1a}, $h$ represents the water depth, $\bm\nu=(u,v)^\top$ is the velocity field, ${\rm Id}_d$ is the $d\times d$ identity matrix (with $d=2$ in this work), $Z$ is the bottom topography, $g$ is the gravity constant, and $\theta$ denotes the potential temperature field.  The system \eqref{1.1}--\eqref{1.1a}, referred to as the Ripa model, comprises the shallow water equations and terms which account for horizontal temperature fluctuations, making it an important role in modeling ocean currents and understanding real-world phenomena. The governing equations are derived based on multilayered ocean models, and vertically integrating the velocity fields, density, and horizontal pressure gradients in each layer. The inclusion of temperature is particularly advantageous since the temperature-driven forces acting on the water influence the motion and behaviour of the ocean currents.} 

The system \eqref{1.1}--\eqref{1.1a} is a hyperbolic system of balance laws \vla{provided it admits real eigenvalues and linearly independent eigenvectors}. The development of robust and accurate numerical methods for the computation of its solutions is both important and challenging as this nonlinear hyperbolic system admits both smooth and nonsmooth solutions: shocks, rarefaction waves, and their interactions. \vla{Additionally, \eqref{1.1} admits non-trivial steady-state solutions, which often respect a delicate balance between the flux and the source terms at the PDE level. These steady states are of particular interest, as many physically relevant waves can be viewed as small perturbations of such equilibria. Standard methods often treat flux derivatives and source terms differently, leading to circumstances where solutions that are stationary at the PDE level fail to remain stationary in numerical simulations. In such cases, small perturbations of steady states can be amplified by numerical scheme, generating unphysical waves with magnitudes proportional to the grid size. These spurious waves may overshadow the physical waves of interest, causing a phenomenon known as the ``numerical storm'' (see, e.g., \cite{KLshw,LeV98,NPPN_Sw}). While grid refinement can mitigate this issue, extreme refinement is often needed until spurious waves are significantly smaller than the physical waves that one wants to resolve in a near-equilibrium setup. However, the increase in computational cost is usually unaffordable. To address these challenges, the developed numerical method should be well-balanced, meaning it is capable of exactly balancing the flux and source terms to preserve (some) relevant steady states within machine accuracy and effectively capture the nearly steady-state flows on relatively coarse meshes. Furthermore, a good numerical scheme for the Ripa model should possess another important property: it should not develop spurious pressure oscillations in the neighborhood of temperature jumps between, for instance, the ``warm'' and ``cold'' water. These oscillations are analogous to those observed at material interfaces in compressible multifluid computations (see, e.g., \cite{Abgrall_2001_multif} and references therein.}

\vla{Note that if the potential temperature field is excluded by taking $\theta\equiv 1$, the system \eqref{1.1}--\eqref{1.1a} reduces to the classical Saint--Venant system of shallow water equations \cite{SV}, which has been extensively studied and applied to model water flows in rivers, lakes, and coastal areas. One of the most important steady-state solution is} the ``lake at rest'' equilibrium:
\begin{equation}\label{1.2}
 \bm\nu\equiv\mbf 0,\quad w:=h+Z\equiv{\rm Const},
\end{equation} 
where $w$ denotes the water surface level. In the past two decades, various WB methods, in the sense that it exactly preserves ``lake at rest'' equilibria \eqref{1.2}, for numerically solving the two-dimensional shallow water equations have been proposed. \vla{These methods include finite difference (FD) methods \cite{Wang_2022_FDSWE, Ren2024, Zhang_2023_FDSWE, Li_2020_FDSWE, Xing_2005_FDSWE}, finite volume (FV) methods \cite{Li_2012_FVSWE,NPPN_Sw,NXS,KPshw,BEKP,BMK2016,CMOT_WBSW,FMT11, Kurganov2022,Yan2024, DelGrosso2024}, discontinuous Galerkin (DG) methods \cite{LLTX,Xing_2013WBSWE, Zhang_2021_DGSWE,Mantri2024}, and continuous finite element methods \cite{Azerad_2017_FEMSW, Knobloch_2024_FEM,Hajduk}. Additionally, several other contributions deserve mention. In the framework of residual distribution (RD), well-balanced simulations for two-dimensional shallow water equations were demonstrated in \cite{RB09}, where the authors adapted the RD technology to develop second-order schemes that yield results comparable to the ones given by state of the art finite volume discretizations on unstructured triangular meshes. In \cite{Hauck_2020_EGSWE}, an enrich Galerkin (EG) method was proposed by extending the classical finite element approximation space with discontinuous functions supported on elements. Similar to DG, the EG method guarantees local conservation of all primary unknowns while the total number of degrees of freedom is substantially lower than that for the DG space of the same order.}  

\vla{In the presence of the horizontal temperature gradients, however, the structure of the steady-state solutions becomes more complex. In this case, the equilibria of interest are not only the ``lake at rest'':
\begin{equation}\label{1.2a}
  \bm\nu=\mbf 0,\quad \theta={\rm Const},\quad w=h+Z={\rm Const},
\end{equation}
but also the isobaric equilibria:
\begin{equation}\label{1.2b}
  \bm\nu=\mbf 0,\quad Z={\rm Const},\quad p:=h^2\theta={\rm Const},
\end{equation}
where the variable $p$ is referred to as the ``temperature dependent'' pressure. Several attempts have been made in the literature to design well-balanced schemes for the two-dimensional Ripa model \eqref{1.1}--\eqref{1.1a}. The first such work appears to be \cite{CKL}, where a well-balanced positivity-preserving central-upwind scheme coupled with an interface tracking method was developed to preserve both ``lake at rest'' and isobaric equilibria. Other examples, which are capable of preserving only the ``lake at rest'' equilibrium, include FD methods \cite{HL,Ren_Ripa_2024,Liu_2025_Ripa}, FV methods \cite{TouKli,Jelti_2025Ripa,KLZ_2D}, and DG methods \cite{Qian_2018_DGRipa,Li_2020_DGRIPA,Huang_2023}. The aforementioned works assume that the approximate solution is discontinuous on the interior edges of the mesh.}

\vla{Recently, a new class of schemes with globally continuous approximations, inspired by the so-called Active Flux method (see, e.g., \cite{ER_AF2, Eyman13, ER_AF1, Eymann_13,HKS,Barsukow_AF}), was introduced in \cite{Abgrall_camc}. This novel approach, referred to as Active Flux-like method, employs degrees of freedom (DoFs) from quadratic polynomials---all lying on element boundaries---along with the traditional average values within each element. These DoFs are evolved using two formulations of the same PDEs: a conservative version and a non-conservative version. It has been demonstrated that under mild assumptions on the scheme, and assumptions similar to those of the Lax--Wendroff theorem, the solution will converge to a weak solution of the studied problem. The method of lines formulation simplifies time stepping, making it more efficient and applicable for nonlinear hyperbolic problems. Formal third-order accuracy is achieved on a compact computational stencil. This contrasts with FD and FV methods, which typically improve accuracy by extending the computational stencil and using higher-order reconstructions, such as Essentially Non-Oscillatory (ENO) and Weighted ENO (WENO) schemes. Comparing with DG methods, Active Flux and Active Flux-like method are also advantageous, as its
shared DoFs requires less memory and less computational cost. Several extensions of this approach have been developed. For instance, by incorporating equilibrium variables into the non-conservative formulation expressed in primitive variables, a fully well-balanced Active Flux-like scheme was proposed for the one-dimensional shallow water equations in \cite{AbgrallLiu}. Additionally, by introducing higher moments of conservative variables as additional DoFs, the method was extended to arbitrarily high-order accuracy for one-dimensional hyperbolic conservation laws in \cite{AB_FE_FV} and hyperbolic balance laws in \cite{LiuBarsukow}, where the resulting high-order scheme is termed a hybrid finite element--finite volume method. Further extensions to multidimensional hyperbolic conservation laws have also been made, with both conservative and non-conservative formulations expressed in terms of conservative variables. These include the work in \cite{Abgrall_Lin_Liu}, which uses second- and third-order simplex elements on unstructured triangular meshes, and \cite{Abgrall_2025_AFC}, which employs Cartesian grids and refers to the resulting scheme as the generalized Active Flux scheme.

In this work, we use the framework in \cite{Abgrall_Lin_Liu} to develop well-balanced Active Flux-like methods, capable of exactly preserving both the ``lake at rest'' \eqref{1.2a} and isobaric \eqref{1.2b} equilibria, for Ripa system \eqref{1.1}--\eqref{1.1a}. 
We refer to this new scheme as well-balanced PAMPA (Point-Average-Moment PolynomiAl-interpreted), which serve as a novel extension of Active Flux-like method in solving multidimensional hyperbolic balance laws. We introduce only a third-order WB PAMPA scheme here and plan to extend the construction to higher-order schemes via higher-moments in future works. For the third-order case, the updates for all DoFs, including cell averages and point values, are formulated in a semi-discrete form and advanced in time using standard Runge--Kutta methods. More precisely, a conservative version of the studied PDEs is used to update the average values, while a non-conservative formulation, expressed in pressure-momentum-temperature variables, is employed to evolve the point values. The novelties and contributions of this work include the following:
\begin{itemize}
  \item Great flexibility in choosing various non-conservative formulations. Unlike \cite{Abgrall_Lin_Liu,Abgrall_2025_AFC}, where the non-conservative formulation is expressed solely in conservative variables, we introduce the pressure-momentum-temperature variables $(p,h\bm\nu,\theta)$ to rewrite the non-conservative formulation. This rewriting is the key to fulfilling both equilibria \eqref{1.2a} and \eqref{1.2b} simultaneously. We remark that using the conservative or primitive variables in the non-conservative form can preserve only the ``lake at rest'' equilibrium \eqref{1.2a}; see \cref{remark:pritive}.
  \item Well-balanced evolution is ensured in both conservative and non-conservative formulations. In order to achieve the WB update of cell averages, we project the bottom function into the same finite element space as the conservative variables and determin suitable Gauss quadrature rules in the computations of integrals to guarantee that all the integrals are properly calculated at the equilibria \eqref{1.2a} and \eqref{1.2b}. For the WB discretization of non-conservative form, we use the introduced water surface level to decompose the source terms. These ensures that when a linear finite difference operator is applied to approximate all the spatial derivatives, the WB property is achieved. We prove the well-balanced property for both \eqref{1.2a} and \eqref{1.2b} in \cref{prop:WB}.
  \item Positivity preserving and oscillation-eliminating near strong discontinuities. Similar to the approach in \cite{Abgrall_Lin_Liu}, we implement a simple a-posteriori limiting mechanism based on the MOOD paradigm from \cite{CDL,Vilar} coupled with first-order schemes to guarantee the non-negativity of the water depth and potential temperature field. 
  \item Extensive numerical validation. We validate the high-order accuracy, well-balancedness, and robustness of the proposed schemes through a series of numerical experiments. 
\end{itemize}}

%

The rest of the paper is organized as follows. In \cref{sec2}, we introduce the novel third-order WB PAMPA methods. The low-order schemes, which will be used to retain the nonlinear stability, are described in \cref{sec3}. In \cref{sec4}, some numerical examples are presented to illustrate the high-order accuracy in smooth regions for general solutions, the good performance in simulating vortex propagation over flat and non-flat bottom topography, the WB property, and the robustness in capturing shocks and other complex wave patterns, of the proposed schemes. Finally, concluding remarks are given in \cref{sec5}.

\section{Well-balanced third-order PAMPA schemes}\label{sec2}
In this section, we introduce the new third-order semi-discrete well-balanced PAMPA numerical schemes for solving the Ripa system on triangular meshes. 

\vla{In the PAMPA (or Active Flux) framework, the DoFs consist of averages evolved using the conservative formulation \eqref{1.1}--\eqref{1.1a}, and point values updated by a non-conservative formulation (equivalent in smooth regions) of \eqref{1.1}--\eqref{1.1a}:
\begin{equation}\label{2.1}
  \frac{\partial \mbf v}{\partial t}+\mbf J\cdot\nabla\mbf v=\widetilde{\mbf S},\quad \mbf J\cdot\nabla\mbf v=A\frac{\partial\mbf v}{\partial x}+B\frac{\partial\mbf v}{\partial y},
\end{equation}
where $\mbf v=\Psi(\mbf u)$ with $\Psi$ being a one-to-one, continuously differentiable, and invertible mapping. The Jacobian matrix vector $\mbf J$ is defined as:
\begin{equation*}
  \mbf J=\frac{\partial \Psi(\mbf u)}{\partial \mbf u}\frac{\partial \mbf f(\mbf u)}{\partial\mbf u}\Big(\frac{\partial \Psi(\mbf u)}{\partial \mbf u}\Big)^{-1}.
\end{equation*}
There are many choices for the variables $\mbf v$. In order to preserve simultaneously the ``lake at rest'' \eqref{1.2a} and isobaric \eqref{1.2b} equilibria, instead of using the standard conservative variables as in \cite{Abgrall_Lin_Liu,Abgrall_2025_AFC} or primitive variables, we choose the pressure-momentum-temperature variables $(p,h\bm\nu,\theta)$ with its associated Jacobian matrices:
\begin{equation}\label{2.2a}
  A=\left(
      \begin{array}{cccc}
        0 & 2h\theta & 0 & h^2u \\
        \frac{1}{2}\big(g-\frac{u^2}{h\theta}\big) & 2u & 0 & \frac{hu^2}{2\theta} \\
        -\frac{uv}{2h\theta} & v & u & \frac{huv}{2\theta} \\
        0 & 0 & 0 & u \\
      \end{array}
    \right),\quad B=\left(
      \begin{array}{cccc}
        0 & 0 & 2h\theta & h^2v \\
        -\frac{uv}{2h\theta} & v & u & \frac{huv}{2\theta} \\
        \frac{1}{2}\big(g-\frac{v^2}{h\theta}\big) & 0 & 2v & \frac{hv^2}{2\theta} \\
        0 & 0 & 0 & v \\
      \end{array}
    \right).
\end{equation}
The system is hyperbolic, i.e., for any vector $\bm n=(n_x,n_y)^\top$, the matrix
\begin{equation*}
  \bm J\cdot\bm n=An_x+Bn_y
\end{equation*}
is diagonalizable in $\mathbb{R}$.

Finally, in order to exactly preserve both the ``lake at rest'' and isobaric equilibria using the above two non-conservative formulations, we follow the idea in \cite{Xing_2005_FDSWE} and split the source terms $-gh\theta\nabla Z$ in \eqref{1.1a} into two terms $\frac{g\theta}{2}\nabla (Z^2)-gw\theta\nabla Z$, and then $\widetilde{\mbf S}$ in \eqref{2.1} reads as
\begin{equation}\label{Snew}
  \widetilde{\mbf S}=\begin{pmatrix}0\\\frac{g\theta}{2}\nabla (Z^2)-gw\theta\nabla Z\\0\end{pmatrix}.
\end{equation}
As we will see below, this special splitting of the source term is crucial for the design of our high order WB schemes to the non-conservative formulation.} 

\begin{rmk}\label{rmk1}
\bla{Note that the conservative variable $\mbf u$ belongs to the following invariant domain:
\begin{equation*}
 \mathcal{D}_{\mbf u}=\{\mbf u=(h,h\bm\nu,h\theta)^\top\in\mathbb{R}^4: h>0, \theta>0\}. 
\end{equation*}
It is easy to find a mapping $\Psi$, which is one-to-one, continuously differentiable, and invertible, such that the two sets of variables used in the non-conservative formulations can be transformed from the conservative ones, i.e., $\mbf v=\Psi(\mbf u)$. For instance, we set $\mbf v=(p,h\bm\nu,\theta)$ with $p=h\cdot h\theta$ and $\theta=\frac{h\theta}{h}$.}
\end{rmk}

\subsection{Approximation space}\label{sec21}
From now on, we assume that a triangulation $\mathcal{E}:=\bigcup\limits_j E_j$ of the computational domain, which consists of triangular cells $E_j$ of size $\vert E_j\vert$, is given. For each triangle $E$, the DoFs are defined at the three vertices $\sigma_i$ (for $i=1,2,3$), the three midpoints $\sigma_i$ (for $i=4,5,6$) on the three edges, and the cell average; see Figure \ref{p2_ele}. In this figure, we also define the scaled normal vectors associated with each DoF, which will be used later. For any polynomial DoF $\sigma\in\{\sigma_1,\sigma_2,\ldots,\sigma_6\}$ over the element $E$:
\begin{itemize}
  \item If $\sigma$ is a vertex, $\mbf n_\sigma$ is the inward normal vector to the edge opposite to $\sigma$. For example,  $\mbf n_1$. 
  \item If $\sigma$ is a midpoint, $\mbf n_\sigma$ is the outward normal vector to the edge on which $\sigma$ is sitting. For example, $\mbf n_4$. 
\end{itemize}
\begin{figure}[ht!]
\centerline{\includegraphics[trim=0.01cm 0.01cm 0.01cm 0.01cm,clip,width=4.5cm]{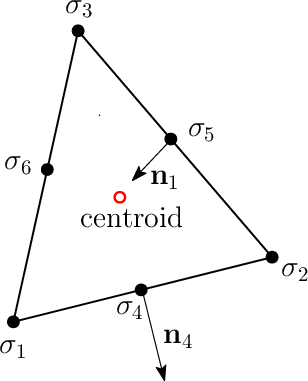}}
\caption{Quadratic triangular element.\label{p2_ele}}
\end{figure}

Equipped with the information of all DoFs for each triangle $E$, as shown in \cite[Section 2]{Abgrall_Lin_Liu}, we can construct a polynomial space $V$ that contains quadratic polynomials and also accommodates the cell average as an independent variable. The seven basis functions for this polynomial space, denoted by $\{\varphi_i\}_{i=1}^7$, are defined as:
\begin{equation*}
\begin{aligned}
  &\varphi_1=\lambda_1(2\lambda_1-1),\quad &&\varphi_2=\lambda_2(2\lambda_2-1),\quad &&\varphi_3=\lambda_3(2\lambda_3-1),\\
  &\varphi_4=4\lambda_1\lambda_2-20\lambda_1\lambda_2\lambda_3,\quad &&\varphi_5=4\lambda_2\lambda_3-20\lambda_1\lambda_2\lambda_3,\quad && \varphi_6=4\lambda_3\lambda_1-20\lambda_1\lambda_2\lambda_3,\\
  &\varphi_7=60\lambda_1\lambda_2\lambda_3,
\end{aligned}
\end{equation*} 
where $(\lambda_1,\lambda_2,\lambda_3)$ are the barycentric coordinates. One can verify that these basis functions satisfy the following conditions:
\begin{subequations}\label{basiscondi}
\begin{equation}\label{cond1}
  \varphi_i(\sigma_j)=\bla{\delta_{ij}},~\mbox{for}~1\leq i,j\leq6,\quad \int_E\varphi_i(\mbf x){\rm d}\mbf x=0,~\mbox{for}~i=1,\ldots,6
\end{equation}
and 
\begin{equation}\label{cond2}
  \varphi_7(\sigma_j)=0,~\mbox{for}~j=1,\ldots,6,\quad \int_E\varphi_7(\mbf x){\rm d}\mbf x=\bla{\vert E \vert}.
\end{equation}
\end{subequations}

Finally, in the third-order PAMPA method, we seek an approximation of the studied PDEs \eqref{1.1}--\eqref{1.1a}, denoted by \bla{$\mbf u_h$}\footnote{\bla{Following the finite element convention, we use the subscript $h$ to indicate a finite element approximation of a variable. For example, $\mbf u_h$ denotes a finite element approximation of $\mbf u$. Meanwhile, we have also used the letter $h$ for the water depth in this paper. Therefore, $h_h$ would stand for a finite element approximation of the water depth $h$.}}, which belongs to the finite dimensional space $V$ and within each element $E$ is given by 
\begin{equation}\label{2.5}
  \bla{\mbf u_h}|_E=\sum_{i=1}^{6}\mbf u_{\sigma_i}\varphi_i+\xbar{\mbf u}_E\varphi_7,
\end{equation}
where $\mbf u_{\sigma_i}$ is the point value of the variable $\mbf u$ at the element boundary DoF $\sigma_i$ and $\xbar{\mbf u}_E$ is the cell average defined as:
\begin{equation*}
  \xbar{\mbf u}_E=\frac{1}{|E|}\int_E\bla{\mbf u_h}(\mbf x){\rm d}\mbf x.
\end{equation*}
We project the bottom function $Z$ into the same space $V$, to obtain an approximation which is denoted by \bla{$Z_h$}.

\bla{
\begin{rmk}
We emphasize that the PAMPA methodology should not be confused with other approximation space enrichment approaches, such as enriched Galerkin (EG) methods and bubble function-based techniques. Compared to the quadratic polynomial space enriched by a cubic bubble function in \cite{Maeng_thesis}, we observe that the second condition in \eqref{cond1} does not hold in \cite{Maeng_thesis}, leading to a distinct approximation space. Furthermore, we also note that the idea of the EG method is to enhance the CG approximation space using element-local discontinuous functions and, by relying on a solution procedure nearly identical to that of the DG method (e.g., numerical edge fluxes and Riemann solvers), the PAMPA method employs a continuous solution reconstruction, eliminating the need for numerical fluxes and Riemann solvers. 
\end{rmk}}

\subsection{Semi-discrete scheme for the update of cell average}\label{sec22}
In this subsection, we describe how to evolve in time the cell average using the high-order spatial discretization.

A semi-discrete scheme for \eqref{1.1}--\eqref{1.1a} is a system of ODEs for the approximations of the cell average $\xbar{\mbf u}_E$ on a triangle $E$:
\begin{equation}\label{2.6}
  |E|\frac{{\rm d}\xbar{\mbf u}_E}{{\rm d}t}+\oint_{\partial E}{\mbf f}(\bla{\mbf u_h})\cdot\mbf n\; {\rm d}\gamma=\int_E\mbf S(\bla{\mbf u_h},\bla{Z_h})\;{\rm d}\mbf x,
\end{equation}
where $\mbf n$ is the outward unit normal at almost each point on the boundary $\partial E$ and we have applied the divergence theorem to rewrite the integral of the flux function over the triangle $E$.

\bla{Notice that we enforce global continuity on interior edges of the mesh: the point values on an edge are identical to those from the adjacent cells.} In the numerical simulations, \bla{we, therefore, can directly compute} the surface integral over $\partial E$ using a quadrature formula on each edge. Specifically,
\begin{equation}\label{2.7}
  \oint_{\partial E}{\mbf f}(\bla{\mbf u_h})\cdot\mbf n\; {\rm d}\gamma\bla{=}\sum_{l=1}^{3}\int_{{\rm ed}_l^E}{\mbf f}(\bla{\mbf u_h})\cdot\mbf n_l^E\; {\rm d}\gamma
\end{equation}  
where $\mbf n_l^E$ is the outer unit normal to the $l$-th edge of triangle $E$.

\bla{Generally speaking, the scheme described above preserves neither the ``lake at rest'' equilibrium nor the isobaric equilibrium unless suitable quadrature rules are specified. Recalling that we project the bottom topography into the same finite element space, the numerical solution $\mbf u_h$ and $Z_h$ are polynomials in each element $E$, making $\mbf f(\mbf u_h)$ and $\mbf S(\mbf u_h, Z_h)$ both polynomials as well.

In order to preserve the ``lake at rest'' equilibrium, we require exact computation of edge and area integrals at this steady state. This is achieved using Gaussian quadrature rules of degrees of precision at least four and five for edge and area integrals, respectively. However, these rules alone cannot preserve the isobaric equilibrium \eqref{1.2b}, since the interest quantity to preserve---the pressure---is a nonlinear function of the conservative variables. 

At the isobaric equilibrium, the bottom topography is flat, eliminating the need to modify the evaluation of the area integral. For the edge integral \eqref{2.7}, we note that the pressure is constant at the three DoFs on each element edge. This allows us to use a 3-point Gauss--Lobatto quadrature rule (collocation method) for edge integrals while preserving the isobaric equilibrium and numerical accuracy. 

To adaptively select the suitable quadrature rules for the edge integral \eqref{2.7}, we introduce a local plateau condition for the bottom topography within each element $E$. When the bottom topography is locally flat in an extended neighborhood of element $E$, we use the Gauss--Lobatto rule. Otherwise, we apply the Gauss--Legendre rule with 5 quadrature points. The extended neighborhood of $E$ is given by the index set $\bar{V}_E=\{\b{V}_{E_i}\}_{E_i\in\b{V}_{E_i}}$ (see Figure \ref{neighbour_cell}--(b), where $\bar{V}_E=\{\b{V}_{E},\b{V}_{E_1},\b{V}_{E_2},\b{V}_{E_3}\}$). Here, $\b{V}_{E_i}$ is the neighborhood index set and defined as the set containing $E_i$ and all elements sharing an edge with it (see Figure \ref{neighbour_cell}--(a), where $\b{V}_E=\{E,E_1,E_2,E_3\}$). An element $E$ satisfies the local plateau condition of bottom topography, if
\begin{equation}\label{plateau_Z}
  \max_{\mbf x\in\bar{V}_E}(Z(\mbf x))-\min_{\mbf x\in\bar{V}_E}(Z(\mbf x))\leq \varepsilon,
\end{equation}
where we set $\varepsilon=10^{-6}$ in the numerical experiments. 
\begin{figure}[ht!]
\centerline{\subfigure[Neighborhood cells of $E$]{\includegraphics[trim=0.01cm 0.01cm 0.01cm 0.01cm,clip,width=5.0cm]{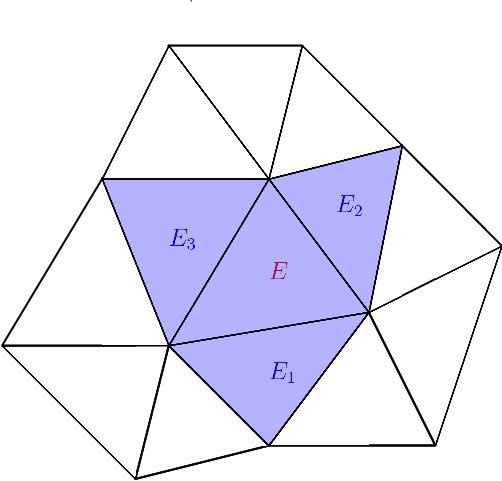}}\hspace*{0.5cm}
	\subfigure[Extended neighborhood cells of $E$]{\includegraphics[trim=0.01cm 0.01cm 0.01cm 0.01cm,clip,width=5.0cm]{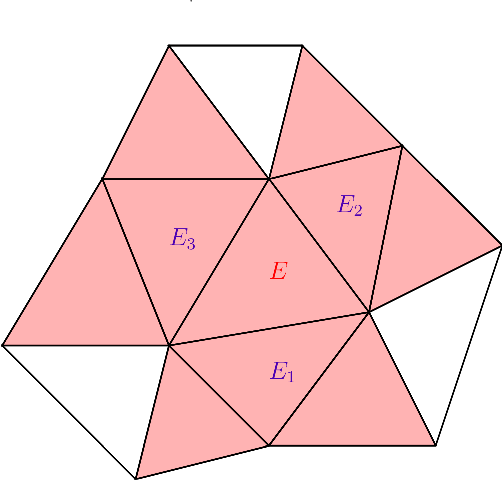}}}
\caption{\sf Mesh notation and the (extended) neighbourhood areas of an element $E$. \label{neighbour_cell}}
\end{figure}
The quadrature rule selection process is summarized in Algorithm \cref{alg:Gaussian_integrals}.
\begin{algorithm}
\color{blue}
\caption{Approximation of area integral in \eqref{2.6} and edge integral \eqref{2.7}}
\label{alg:Gaussian_integrals}
\begin{algorithmic}[1]
\FORALL{Element $E\in \mathcal{E}$ }
\STATE{Compute the area integral $\int_E\mbf S(\bla{\mbf u_h},\bla{Z_h})\;{\rm d}\mbf x$ using $7$-point Gaussian quadrature.}
\STATE{Check local plateau condition \eqref{plateau_Z} for $E$.}
\IF {$E$ is a locally flat}
\STATE{Compute edge integral \eqref{2.7} using 3-point Gauss--Lobatto rule (3 collocation points per edge).}
\ELSE
\STATE{Compute edge integral \eqref{2.7} using 5-point Gauss--Legendre rule}
\ENDIF
\ENDFOR
\end{algorithmic}
\end{algorithm}
With these adapted quadrature rules, we can demonstrate that the semi-discrete scheme for the conservative form is WB (see \cref{prop:WB}).} 
\bla{
\begin{rmk}\label{conservation}
It appears that other WB approximations of the source terms exist. For instance, one can decompose the source terms as 
\begin{equation*}
  -\int_{E}gh\theta\nabla Z\;{\rm d}\mbf x=\int_{E}\frac{g}{2}\nabla (h^2\theta)\;{\rm d}\mbf x-\int_{E}\frac{gh^2}{2}\nabla \theta\;{\rm d}\mbf x-\int_{E}gh\theta\nabla w\;{\rm d}\mbf x.
\end{equation*}
Note that the first integral can be cancelled by the acoustic part of the flux. The remaining two integrals, which naturally vanish at the ``lake at rest'' equilibrium, can be approximated using Gaussian quadrature rules with fewer quadrature points than those used in our approach.
However, in the case of flat bottom topography where the source terms vanish, this approximation is neither exactly zero nor consistent with zero. It is also important to note that the derivative terms $\nabla \theta$ and $\nabla w$ involve unknown functions $\theta$ and $h$. In this situation, conservation and convergence towards weak solutions may be problematic for discontinuous solutions, since the well-known Lax--Wendroff theorem. Similar observations in the FD WENO methods for the shallow water equations can be found in \cite{Xing_2005_FDSWE}.
\end{rmk}}

\subsection{Semi-discrete schemes for the update of point value}\label{sec23}  
In this subsection, we describe how to evolve the boundary DoFs of each triangle $E$ in time using the non-conservative formulations \eqref{2.1}--\eqref{2.2a}.


\bla{Since the update of point values is not constrained by local conservation and we are not restricted by a conservation formulation. Therefore, we can apply a finite difference-type method in the non-conservative form.} Following the approach introduced in \cite[Section 3.1]{Abgrall_Lin_Liu}, the evolution of any boundary DoF \vla{$\mbf v_\sigma$ ($\sigma\in\{\sigma_1,\sigma_2,\cdots,\sigma_6\}$)} is carried out by the following semi-discrete form:
\begin{equation}\label{2.12}
  \frac{{\rm d}\vla{\mbf v}_\sigma}{{\rm d}t}+\sum\limits_{E, \sigma\in E} \Phi_\sigma^E(\vla{\mbf v_h})=0,
\end{equation}
where $\mbf v_h$ is the numerical approximation of $\mbf v$.
\vla{Inspired by Residual Distribution schemes, and in particular the LDA \footnote{The LDA scheme is designed for steady advection problems on triangular meshes for scalar problems. Point value residuals are defined to update the solution similarly to our approach. It is locally conservative, linearity-preserving, and multidimensional upwind but does not suppress spurious oscillations at discontinuities. In its original version, it is second order accurate. The accuracy is obtained by defining the residual at this point as the product of the upwind parameter with $J\cdot\nabla u$. The upwind property is obtained by considering the sign of the scalar product of local velocity (hence $J$ for a scalar problem) against the normal opposite to the vertex where the residual is evaluated. Finally local conservation is guaranteed by multiplying the upwind parameter by a scaling factor, the so-called $N$ parameter, so that the sum of the fluctuations over the element is equal to $\int_E J\cdot \nabla u\;{\rm d}\mbf x$. The $J$ Jacobians are evaluated so that this integral is equal to the integral over the boundary of the normal flux, by some generalisation of the Roe average.} (Low Diffusion advection A) scheme \cite{Deconinck2007}, we can define point residual} $\Phi_\sigma^E(\vla{\mbf v_h})$ as
\begin{equation}\label{2.13}
 \bla{ \Phi_\sigma^E(\mbf v_h)=N_\sigma \big(K_\sigma^E\big)^+ \Big(\mbf J_\sigma\cdot \nabla \mbf v_\sigma-\widetilde{\mbf S}_\sigma\Big),\quad N_\sigma^{-1}= \sum\limits_{E, \sigma\in E} \big (K_\sigma^E\big )^+},
\end{equation}
\bla{where $\mbf J_\sigma=\mbf J(\mbf v_\sigma)$, $\nabla \mbf v_\sigma=\nabla \mbf v_h(\mbf v_\sigma)$, and $\widetilde{\mbf S}_\sigma=\widetilde{\mbf S}(\mbf v_\sigma, Z_\sigma)$. Since the problem is hyperbolic, for any vector $\mbf n_\sigma$ defined in \cref{sec21}, the matrix $K_\sigma^E$ can be decomposed into its positive and negative parts:
\begin{equation*}
  K^E_\sigma =\mbf J_\sigma\cdot \mbf n_\sigma = (K^E_\sigma)^-+(K^E_\sigma)^+=R\Lambda R^{-1}=R\Lambda^- R^{-1}+R\Lambda^+R^{-1},
\end{equation*}
where $R$ is the matrix of eigenvectors of $K^E_\sigma$ corresponding to eigenvalues $\{\lambda_1,\dots,\lambda_4\}$ (since $\mbf u\in\mathbb{R}^4$), and
\begin{equation*}
    \Lambda^+ = \diag\big(\max(\lambda_1, 0), \dots, \max(\lambda_4, 0)\big), ~
    \Lambda^-= \diag\big(\min(\lambda_1, 0), \dots, \min(\lambda_4, 0)\big).
\end{equation*}
}

\begin{rmk}
\vla{Recalling that the update of point values is primarily constrained only by the requirement that the resulting method remains stable. With this in mind, a natural approach is to incorporate an upwinding mechanism \footnote{\vla{Recently, we have proven that the central version also satisfies the stability requirement due to its staggered nature \cite{central_PAMPA}.}}, similar to the one used in the original LDA scheme \cite{Deconinck2007,Weide}.} \bla{Moreover, we can verify that this choice ensures
consistency in cases where $\nabla\mbf v_\sigma$ and $Z$ are constants. Namely, we recover
\begin{equation*}
  \sum\limits_{E, \sigma\in E} \Phi_\sigma^E(\mbf v_h)=\sum\limits_{E, \sigma\in E}N_\sigma \big(K_\sigma^E\big)^+\mbf J_\sigma\cdot \nabla \mbf v_\sigma=N_\sigma\Big(\sum\limits_{E, \sigma\in E}\big(K_\sigma^E\big)^+\Big)\mbf J_\sigma\cdot \nabla \mbf v_\sigma=J_\sigma\cdot \nabla \mbf v_\sigma.
\end{equation*}}
\end{rmk}

\vla{
To complete the computation in \eqref{2.13}, we also need to calculate $\nabla \mbf v_\sigma$ and $\widetilde{\mbf S}_\sigma$, which take the following forms:
\begin{equation*}
  \nabla \mbf v_\sigma=\begin{pmatrix}\nabla p_\sigma\\\nabla (h\bm\nu)_\sigma\\\nabla\theta_\sigma\end{pmatrix}\quad \mbox{and}\quad
  \widetilde{\mbf S}_\sigma\stackrel{\eqref{Snew}}{=}\begin{pmatrix}0\\\frac{g\theta_\sigma}{2}\nabla Z^2_\sigma-gw_\sigma\theta_\sigma\nabla Z_\sigma\\0\end{pmatrix},~~ w_\sigma=h_\sigma+Z_\sigma.
\end{equation*}
Obviously, we require high-order FD approximations for $\nabla \mbf v_\sigma$, $\nabla Z_\sigma$, and $\nabla Z^2_\sigma$. These can be obtained using the point values of the boundary DoFs and the point value at the centroid of each element. Since $Z$ is a given function, its point values, as well as those of $Z^2$, are immediately available. We now describe how to compute the point values $\mbf v_\sigma$ at the element boundaries and the point value at the centroid, denoted by $\mbf v_c$. First, using the polynomial expansion \eqref{2.5}, we compute the point value of $\mbf u$ at the centroid, denoted as $\mbf u_c$:
\begin{equation}\label{uc}
  \mbf u_c=\frac{20}{9}\xbar{\mbf u}_E-\frac{1}{9}\sum_{i=1}^3\mbf u_{\sigma_i}-\frac{8}{27}\sum_{i=4}^6\mbf u_{\sigma_i}.
\end{equation}  
Together with the boundary DoFs $\mbf u_\sigma$ and using \cref{rmk1}, we obtain the point values $\mbf v_\sigma=\Psi(\mbf u_\sigma)$ and $\mbf v_c=\Psi(\mbf u_c)$. Next, we employ these point values ($\{\mbf v_{\sigma_i}\}_{i=1}^6$ and $\mbf v_c$) to define a linear FD operator $D$, constructed using the Lagrange polynomial interpolation that includes $\mbf v_c$. The basis functions are defined as follows:
\begin{equation*}
\begin{aligned}
  &\phi_7=27\lambda_1\lambda_2\lambda_3,\\
  &\phi_1=\lambda_1(2\lambda_1-1)+\frac{1}{9}\phi_7, &&\phi_2=\lambda_2(2\lambda_2-1)+\frac{1}{9}\phi_7, &&\phi_3=\lambda_3(2\lambda_3-1)+\frac{1}{9}\phi_7,\\
  &\phi_4=4\lambda_1\lambda_2-\frac{4}{9}\phi_7, &&\phi_5=4\lambda_2\lambda_3-\frac{4}{9}\phi_7, && \phi_6=4\lambda_3\lambda_1-\frac{4}{9}\phi_7,
\end{aligned}
\end{equation*}
satisfying the interpolation condition $\phi_i(\sigma_j)=\delta_{ij}$ for $1\leq i,j\leq7$, where $\sigma_7$ represents the DoF at the centroid. Finally, we compute $\nabla \mbf v(\sigma)$ as
\begin{equation*}
  \nabla\mbf v\approx D(\mbf v)=\sum_{i=1}^{6}\mbf v_{\sigma_i}\nabla\phi_i+\mbf v_c\nabla\phi_7.
\end{equation*}
The same linear FD operator can be applied to $Z$ and $Z^2$, yielding
\begin{equation*}
  \nabla Z\approx D(Z)=\sum_{i=1}^{6}\mbf Z_{\sigma_i}\nabla\phi_i+\mbf Z_c\nabla\phi_7,\quad \nabla Z^2\approx D(Z^2)=\sum_{i=1}^{6}\mbf (Z_{\sigma_i})^2\nabla\phi_i+\mbf (Z_c)^2\nabla\phi_7.
\end{equation*}

\begin{rmk}
Alternatively, one could derive a high-order linear FD operator using only the boundary DoFs without incorporating the value at the centroid.  However, this introduces a hidden issue: the non-conservative form loses its connection to the conservative form. From the computation of the centroid point value $\mbf u_c$ in \eqref{uc}, we observe that this communication between the two formulations is maintained through the use of $\xbar{\mbf u}_E$.
\end{rmk}
}

\subsection{Summary of the third-order well-balanced schemes}\label{summary}
The proposed third-order WB PAMPA methods for the shallow water equations with temperature gradients are detailed in \cref{sec22} and \cref{sec23}. The evolution of the cell averages is governed by \eqref{2.6}, where the surface flux defined in \eqref{2.7} and the area integral of the source terms are evaluated by suitable Gaussian quadrature rules. The boundary point values are updated according to \eqref{2.1}--\eqref{2.2a} for the pressure-momentum-temperature variables $(p,h\bm\nu,\theta)$. The construction of the numerical schemes demonstrates that the PAMPA methods use a compact stencil and require fewer DoFs to achieve high order, offering potential advantages over WENO or DG methods. Moreover, the well-balanced property of the PAMPA methods is achieved without relying on complex numerical flux evaluations or hydrostatic reconstruction techniques. This advantage benefits from the continuous assumption of the solution and the flexible choice of the well-balanced evolution for the point values. Finally, by collecting the results of the previous subsections, it is straightforward to prove the following WB result:

\begin{prop}\label{prop:WB}
The third-order WB PAMPA methods for the shallow water equations with temperature gradients, as described above, \vla{maintain the WB property for the steady states ``lake at rest'' \eqref{1.2a} and isobaric \eqref{1.2b}}. That is, if the discrete data satisfy \vla{``lake at rest'':
\begin{equation}\label{2.16}
\begin{aligned}
  &(hu)_\sigma=(hv)_\sigma\equiv0,& &\theta_\sigma=\widehat{\theta}\equiv{\rm Const},& & h_\sigma+Z_\sigma=\widehat{w}\equiv{\rm Const},&& \forall \sigma,\\
  &\xbar{(hu)}_E=\xbar{(hv)}_E\equiv0,& &\xbar{(h\theta)}_E=\xbar{h}_E,& & \xbar{h}_E+\xbar{Z}_E=\widehat{w}\equiv{\rm Const},&& \forall E, 
\end{aligned}
\end{equation}
or isobaric:
\begin{equation}\label{2.16a}
\begin{aligned}
  &(hu)_\sigma=(hv)_\sigma\equiv0,& &Z_\sigma\equiv{\rm Const},& & p_\sigma=h_\sigma^2\theta_\sigma=\widehat{p}\equiv{\rm Const},&& \forall \sigma,\\
  &\xbar{(hu)}_E=\xbar{(hv)}_E\equiv0,& &\xbar{Z}_E\equiv{\rm Const},& & \xbar{p}_E=\xbar{h}_E\xbar{(h\theta)}_E=\widehat{p}\equiv{\rm Const},&& \forall E, 
\end{aligned}
\end{equation}}
then \eqref{2.6} and \eqref{2.12} reduce to
\begin{equation*}
  \frac{{\rm d}\xbar{\mbf u}_E}{{\rm d}t}=\mbf 0 \quad \mbox{and} \quad \frac{{\rm d}{\mbf v}_\sigma}{{\rm d}t}=\mbf 0.
\end{equation*}
\end{prop}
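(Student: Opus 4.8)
The plan is to verify the two evolution equations \eqref{2.6} and \eqref{2.12} independently, since once the data is frozen at a steady state the cell-average update and the point-value update are decoupled; for each it suffices to show that the right-hand side vanishes at each of the two equilibria \eqref{2.16} and \eqref{2.16a}. Throughout I would lean on the partition-of-unity identities $\sum_{i=1}^{7}\varphi_i\equiv1$ and $\sum_{i=1}^{7}\phi_i\equiv1$ (the latter for the interpolation basis feeding the operator $D$), both consequences of exact reproduction of constants. These guarantee that the discrete relations $(h\theta)_h=\widehat{\theta}\,h_h$ and $h_h+Z_h\equiv\widehat{w}$ (lake at rest), or $Z_h\equiv\mathrm{Const}$ (isobaric), hold as \emph{exact} polynomial identities on each $E$; hence $\theta_h$ and $w_h$ (resp.\ $Z_h$) are exactly constant and $(h\bm\nu)_h$ is identically zero.

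First I would treat the cell-average equation \eqref{2.6}. At zero velocity the only surviving flux is the momentum block $\hf g h^2\theta\,{\rm Id}_d$ and the only surviving source is its momentum block, so the mass and temperature equations are balanced trivially. For lake at rest I would factor out the constant $\widehat{\theta}$, use that the chosen degree-$\ge4$ edge rule and the $7$-point (degree $5$) area rule integrate the quartic $h_h^2$ and the cubic $h_h\nabla Z_h$ exactly, and then apply the divergence theorem, $\oint_{\partial E}h_h^2\mbf n\,{\rm d}\gamma=\int_E\nabla(h_h^2)\,{\rm d}\mbf x$; combining the two exact integrals reduces the momentum residual to a multiple of $\int_E h_h\nabla w_h\,{\rm d}\mbf x$, which vanishes since $w_h$ is constant. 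For the isobaric state the bottom is flat, so the plateau test \eqref{plateau_Z} selects the $3$-point Gauss--Lobatto rule of \eqref{2.7}; its collocation nodes are exactly the three edge DoFs where $p=\widehat{p}$, so the momentum flux on each edge is $\hf g\widehat{p}\,|{\rm ed}|\,\mbf n$, and summing over the three edges gives $\hf g\widehat{p}\oint_{\partial E}\mbf n\,{\rm d}\gamma=\mbf0$, while the source vanishes because $\nabla Z_h\equiv\mbf0$.

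Next I would treat the point-value equation \eqref{2.12}; here it suffices to show the bracket in \eqref{2.13} vanishes, i.e.\ $\mbf J_\sigma\cdot\nabla\mbf v_\sigma=\widetilde{\mbf S}_\sigma$ at each DoF, so that every $\Phi_\sigma^E=\mbf0$. Evaluating the Jacobians \eqref{2.2a} at $u=v=0$ collapses $A$ and $B$ to matrices whose only nonzero rows are the first row (acting on $\partial(h\bm\nu)$, which is zero) and the two momentum rows, each equal to $\hf g$ acting on $\partial p$; the whole residual therefore reduces to the momentum identity $\hf g\,\nabla p=\widetilde{\mbf S}_{\mathrm{mom}}$. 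For lake at rest I would exploit the split source \eqref{Snew}: the pressure obeys the exact nodal identity $p=\widehat{\theta}(\widehat{w}-Z)^2=\widehat{\theta}\widehat{w}^2-2\widehat{\theta}\widehat{w}Z+\widehat{\theta}Z^2$ at all six boundary DoFs and at the centroid value $p_c=\widehat{\theta}h_c^2$ produced by \eqref{uc}. Applying the single linear operator $D$ and using $D(\mathbf1)=\mbf0$ yields $D(p)=\widehat{\theta}D(Z^2)-2\widehat{\theta}\widehat{w}D(Z)$, matching $\tfrac{g\widehat{\theta}}{2}D(Z^2)-g\widehat{w}\widehat{\theta}D(Z)=\widetilde{\mbf S}_{\mathrm{mom}}$ term by term; this cancellation is exactly what the splitting and the use of the \emph{same} $D$ for $p$, $Z$ and $Z^2$ are built to produce. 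For the isobaric state $D(Z)=D(Z^2)=\mbf0$, so the identity reduces to $D(p)=\mbf0$.

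The hard part will be this last reduction. Since $p=h\cdot(h\theta)$ is genuinely nonlinear in the conservative DoFs, constancy $p_\sigma=\widehat{p}$ at the edge DoFs does \emph{not} by itself force the discrete gradient to vanish: writing the pressure interpolant as $\widehat{p}+(p_c-\widehat{p})\phi_7$ gives $D(p)_\sigma=(p_c-\widehat{p})\nabla\phi_7(\sigma)$, so everything hinges on whether the reconstructed centroid value $p_c=h_c(h\theta)_c$ from \eqref{uc} and \cref{rmk1} also equals $\widehat{p}$. Establishing this compatibility is the crux of the isobaric case and the one place where the nonlinearity truly bites; I would verify it directly from the discrete isobaric constraints \eqref{2.16a}, observing in particular that it holds for the physically relevant constant-depth (``warm/cold pool'') configurations, where $h_h$ and $\theta_h$ are locally constant so that $p_c=h_c(h\theta)_c=\widehat{p}$ automatically and $D(p)=\mbf0$. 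Once $D(p)=\mbf0$ is secured, every residual vanishes and both \eqref{2.6} and \eqref{2.12} reduce to zero time derivatives, completing the argument.
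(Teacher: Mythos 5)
Your proposal follows the same decomposition and the same key cancellations as the paper's proof: the exact polynomial identities obtained from partition of unity, exact quadrature combined with the divergence theorem for the cell-average update at ``lake at rest'', the Gauss--Lobatto collocation argument with $\sum_{l}\int_{{\rm ed}_l^E}\mbf n_l^E\,{\rm d}\gamma=\mbf 0$ for the isobaric edge flux, and the source splitting \eqref{Snew} together with linearity and consistency of the operator $D$ for the point values. Your lake-at-rest point-value computation (write $p=\widehat{\theta}\widehat{w}^2-2\widehat{\theta}\widehat{w}Z+\widehat{\theta}Z^2$ nodally and apply the single linear operator $D$) is exactly the regrouping carried out in the paper, and your cell-average argument via $\int_E h_h\nabla w_h\,{\rm d}\mbf x=0$ is an equivalent rephrasing of the paper's $\nabla\cdot\mbf f(\mbf u_h)=\mbf S(\mbf u_h,Z_h)$ step.

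The one place where you and the paper diverge is the isobaric point-value case, and you are right that it is the crux. The paper disposes of it in one line --- ``$p=\widehat p={\rm Const}$ and $D$ is consistent, hence $D(p)=\mbf 0$'' --- which tacitly assumes that the reconstructed centroid value $p_c=h_c\,(h\theta)_c$ obtained from \eqref{uc} and \cref{rmk1} also equals $\widehat p$. You correctly isolate this as the only nontrivial claim, since $D(p)_\sigma=(p_c-\widehat p)\,\nabla\phi_7(\sigma)$ and $\nabla\phi_7$ does not vanish at the edge midpoints. However, your resolution covers only the locally constant-depth configuration, and the general discrete hypothesis \eqref{2.16a} does \emph{not} imply $h_c(h\theta)_c=\widehat p$: take for instance $\xbar{h}_E=1$, $h_{\sigma_1}=2$, all other depth DoFs equal to $1$, with $(h\theta)_\sigma=\widehat p/h_\sigma$ and $\xbar{(h\theta)}_E=\widehat p$; then \eqref{2.16a} holds but $h_c=\tfrac{8}{9}$, $(h\theta)_c=\tfrac{19}{18}\widehat p$, so $p_c=\tfrac{76}{81}\widehat p\neq\widehat p$. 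Consequently, to close your argument (and, strictly speaking, to make the paper's one-line justification rigorous) in the nonconstant-depth isobaric regime, one must either include $p_c=\widehat p$ as part of the definition of the discrete equilibrium data or derive it from the particular way the steady state is initialized; it is not a consequence of \eqref{2.16a} alone. Apart from making that additional hypothesis explicit, your proof is complete and coincides with the paper's.
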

\begin{proof}
\bla{
We begin with the conservative formulation given in \eqref{2.6}. When the data satisfy \eqref{2.16}, we have 
\begin{equation*}
\begin{aligned}
  (h\bm\nu)_h&=\sum_{i=1}^{6}(h\bm\nu)_{\sigma_i}\varphi_i+\xbar{(h\bm\nu)}_E\varphi_7=\mbf 0,\\
  (h\theta)_h&=\sum_{i=1}^{6}(h\theta)_{\sigma_i}\varphi_i+\xbar{(h\theta)}_E\varphi_7=\widehat{\theta}h_h,\\
  h_h+Z_h&=\sum_{i=1}^{6}(h_{\sigma_i}+Z_{\sigma_i})\varphi_i+(\xbar{h}_E+\xbar{Z}_E)\varphi_7=\widehat{w}.
  \end{aligned}
\end{equation*}
and then we can verify that $\nabla\cdot\mbf f(\mbf u_h)=\mbf S(\mbf u_h, Z_h)$. Since both the edge and area integrals in \eqref{2.6} and \eqref{2.7} are calculated exactly for the still water using suitable Gaussian quadrature rules, the fluctuations of \eqref{2.6} equivalent to
\begin{equation*}
  \sum_{l=1}^{3}\int_{{\rm ed}_l^E}{\mbf f}(\mbf u_h)\cdot\mbf n_l^E\; {\rm d}\gamma-\int_E\mbf S(\mbf u_h,Z_h)\;{\rm d}\mbf x=\int_E \big(\nabla\cdot\mbf f(\mbf u_h)-\mbf S(\mbf u_h, Z_h)\big)\;{\rm d}\mbf x=0.
\end{equation*}
This directly implies that $\frac{{\rm d}\xbar{\bm u}_E}{{\rm d}t}=\mbf 0$. When the data satisfy \eqref{2.16a}, we have $p_{\sigma_i}=\widehat{p}$, $\nabla Z_h\equiv\mbf 0$. Therefore, for edge integral,
\begin{equation*}
  \sum_{l=1}^{3}\int_{{\rm ed}_l^E}{\mbf f}(\mbf u_h)\cdot\mbf n_l^E\; {\rm d}\gamma=\sum_{l=1}^{3}\sum_{i=1}^{3}
  \begin{pmatrix}
      0 \\
      \frac{g}{2}p_{\sigma_i} \\
      0 \\
    \end{pmatrix}
 \widetilde{\omega}_i\mbf n_l^E=\frac{g}{2}\widehat{p}\sum_{l=1}^{3}\mbf n_l^E=\mbf 0,
\end{equation*}
where $\{\widetilde{\omega}_i\}_{i=1}^3$ are the 3-point Gauss-Lobatto weights. For the area integral,
\begin{equation*}
  \int_E\mbf S(\mbf u_h,Z_h)\;{\rm d}\mbf x=-g\vert E\vert\sum_{\mbf x_{i}}
    \begin{pmatrix}
      0 \\
      (h\theta)_h(\mbf x_i)\nabla Z_h(\mbf x_i) \\
      0 \\
    \end{pmatrix}\omega_{i}=\mbf 0,
\end{equation*}
where $(\mbf x_{i},\omega_{i})$ are Gaussian quadrature pairs.

Next, we consider the non-conservative form \eqref{2.1}--\eqref{2.2a}. For the ``lake at rest'' equilibrium \eqref{2.16}, we obtain
\begin{equation*}
\begin{aligned}
  \mbf J_\sigma\cdot \nabla \mbf v_\sigma-\widetilde{\mbf S}_\sigma
  &=\left(\begin{array}{c}
     0 \\
     \frac{g}{2}D(p)-\frac{g\widehat{\theta}}{2}D(Z^2)+g\widehat{\theta}\widehat{w} D(Z) \\
     0
     \end{array}\right)\\
  &=\left(\begin{array}{c}
     0 \\
     D\Big(\frac{g\widehat{\theta}}{2}(h^2-Z^2)+g\widehat{\theta}\widehat{w}Z\Big) \\
     0
     \end{array}\right)
  =\left(\begin{array}{c}
     0 \\
     \frac{g\widehat{\theta}\widehat{w}}{2}D\big(\widehat{w}\big) \\
     0
     \end{array}\right)=\mbf 0,
  \end{aligned}
\end{equation*}
where the first equality follows from the fact that both $\theta$ and $w$ are constants at steady state; the second equality is due to the linearity of the FD operator $D$ and the fact that $\widehat{w}={\rm Const}$; the third equality is just a simple regrouping of terms inside the parenthesis; and the last equality follows from the fact that $\widehat{w}={\rm Const}$ and the consistency of the FD operator $D$. Similarly, for the isobaric equilibrium \eqref{2.16a}, we have
\begin{equation*}
  \mbf J_\sigma\cdot \nabla \mbf v_\sigma-\widetilde{\mbf S}_\sigma
  =\left(\begin{array}{c}
     0 \\
     \frac{g}{2}D(p) \\
     0
     \end{array}\right)=\mbf 0,
\end{equation*}
since $p=\widehat{p}={\rm Const}$ at steady state and the FD operator $D$ is consistent. From \eqref{2.13}, we conclude that $\Phi_\sigma^E(\mbf v)=0$ for both steady states, which implies that $\frac{{\rm d}{\mbf v}_\sigma}{{\rm d}t}=0$.}

\end{proof}

\bla{
\begin{rmk}\label{remark:pritive}
In the non-conservative form, if we consider the primitive variables $(h,\bm\nu,\theta)$ and the associated Jacobian matrices and source terms:
\begin{equation*}
   A=\left(
      \begin{array}{cccc}
        u & h & 0 & 0 \\
        g\theta & u & 0 & \frac{gh}{2} \\
        0 & 0 & u & 0 \\
        0 & 0 & 0 & u \\
      \end{array}
    \right),\quad B=\left(
      \begin{array}{cccc}
        v & 0 & h & 0 \\
        0 & v & 0 & 0 \\
        g\theta & 0 & v & \frac{gh}{2} \\
        0 & 0 & 0 & v \\
      \end{array}
    \right),\quad 
    \widetilde{\mbf S}=\left(
      \begin{array}{c}
        0 \\
        -g\theta \nabla Z \\
        0 \\
      \end{array}
    \right).
\end{equation*}
It is trivial to verify that this formulation preserves the ``lake at rest'' equilibrium since $\theta={\rm Const}$, $\bm\nu=\mbf 0$, and $g\theta\nabla(h+Z)=\mbf 0$. For the isobaric steady state, given that $\bm\nu=\mbf 0$, we only need to examine the momentum equations. Here, we obtain $g\theta_\sigma  D(h)+\frac{gh_\sigma}{2}D(\theta)\neq\frac{g}{2}D(\theta h^2)$, since $D$ is a linear FD operator. Consequently, the fluctuations for the momentum equations do not vanish at the isobaric equilibrium, implying that the scheme is not WB for this steady state. A similar analysis can be performed when using conservative variables.
\end{rmk}}
\section{Non-linear stabilization and low-order schemes}\label{sec3}
As pointed out in \cite{Abgrall_Lin_Liu}, the proposed high-order schemes possess linear stability. However, when the solution develops discontinuities, the high-order schemes will be prone to numerical oscillations. Additionally, the high-order schemes cannot guarantee the preservation of water depth and temperature filed positivity. \vla{These issues can lead to nonlinear instability and even cause code crashes. To address them,} we need some sort of nonlinear limiting. For this purpose, we employ a simplified version of the MOOD paradigm from \cite{CDL,Vilar} combined with low-order schemes to ensure the nonlinear stability. \bla{The idea of the MOOD paradigm operates by first applying a high-order numerical method over the entire domain for a single time step to obtain a candidate solution. Then, for each element, the behaviour of this solution is locally checked using certain, pre-selected admissibility criteria, such as computer admissibility, physical admissibility, monotonicity, and others. If the candidate solution meets the selected criteria, it is accepted. Otherwise, it is locally recomputed with a spatially lower order numerical method. This procedure is repeated until said criteria are universally satisfied, or when a robust first order scheme is employed. Essentially, this approach is a-posteriori limiting strategy. In our implementation, the MOOD paradigm involves working with two numerical schemes ranging from a third-order WB PAMPA method to a first-order robust Lax--Friedrichs scheme. The latter is designed to be positivity-preserving and oscillation-eliminating.} For more details on the MOOD approach and the selection of admissibility criteria, we refer to \cite[Section 3.4]{Abgrall_Lin_Liu}. In the following, we focus on describing the first-order schemes\bla{, which are not WB but should be monotone}.

\subsection{First-order scheme for cell average}
For the evolution of the cell average $\xbar{\bm u}_E$, we simply take a standard finite volume numerical flux (so we do not use the boundary values). For instance, we use the local Lax-Friedrichs numerical flux, replacing \eqref{2.7} with
\begin{equation}\label{3.1}
  \oint_{\partial E}{\mbf f}(\mbf u)\cdot\mbf n\; {\rm d}\gamma\approx\sum_{l=1}^{3}\ell_l\widehat{\mbf f}(\xbar{\mbf u}_E,\xbar{\mbf u}_{E'};\mbf n_l^E),
\end{equation}
where $\xbar{\mbf u}_{E'}$ represents the cell average of the neighboring triangle that shares the $l$-th edge with triangle $E$. The numerical flux $\widehat{\mbf f}$ is defined as
\begin{equation}\label{3.2}
  \widehat{\mbf f}(\mbf a_1,\mbf a_2;\mbf n)=\frac{1}{2}\Big[\mbf f(\mbf a_1)\cdot\mbf n+\mbf f(\mbf a_2)\cdot\mbf n-\alpha(\mbf a_2-\mbf a_1)\Big],
\end{equation}
where,
\begin{equation*}
  \alpha=\max_{\mbf a=\{\mbf a_1,\mbf a_2\}}\rho\big ( \mbf J(\mbf a)\cdot \mbf n\big ),
\end{equation*}
where $\rho(B)$ is the spectral radius of the matrix $B$. In addition to this modification of flux evaluation, we also replace the source terms approximation using the midpoint quadrature rule. That is, we compute 
\begin{equation*}
\begin{aligned}
 &-g\int_Eh\vla{\theta}\nabla Z_h\;{\rm d}\mbf x\approx-g|E|\xbar{(h\vla{\theta})}_E\nabla Z_h(\mbf x_c)\\
 &\qquad=g\xbar{(h\vla{\theta})}_E\Big[\frac{2}{3}\big(Z_{\sigma_4}\mbf n_3+Z_{\sigma_5}\mbf n_1+Z_{\sigma_6}\mbf n_2\big)-\frac{1}{6}\big(Z_{\sigma_1}\mbf n_1+Z_{\sigma_2}\mbf n_2+Z_{\sigma_3}\mbf n_3\big)\Big].
\end{aligned}
\end{equation*}

\subsection{First-order scheme for point value}
For the evolution of the boundary values $\mbf v_\sigma$, the first-order spatial descretization is more involved. Following the approach introduced in \cite{Abgrall_Lin_Liu}, we temporarily re-number the boundary nodes for the definition of sub-elements. Starting with a selected DoF, denoted as $\varsigma_1$, we list the other nodes in a counter-clockwise manner, creating the list $\{\varsigma_1, \ldots, \varsigma_6\}$ of all boundary DoFs, with the centroid labeled as $\varsigma_7$. We then define six sub-elements, $\{T^E_i\}_{i=1}^6$, with vertices given by \bla{$T^E_1=\{\varsigma_1,\varsigma_2,\varsigma_7\}$, $T^E_2=\{\varsigma_2,\varsigma_3,\varsigma_7\}$, $T^E_3=\{\varsigma_3,\varsigma_4,\varsigma_7\}$, $T^E_4=\{\varsigma_4,\varsigma_5,\varsigma_7\}$, $T^E_5=\{\varsigma_5,\varsigma_6,\varsigma_7\}$, $T^E_6=\{\varsigma_6,\varsigma_1,\varsigma_7\}$}. For each sub-elements $T^E_i$, we can also get the inward normals $\mbf n_{\varsigma_i}$. For quadratic elements, and using back the original numbering, this is illustrated in Figure \ref{p2_ele_subcell}. 
\begin{figure}[ht!]
\centerline{\includegraphics[trim=0.01cm 0.01cm 0.01cm 0.01cm,clip,width=4.5cm]{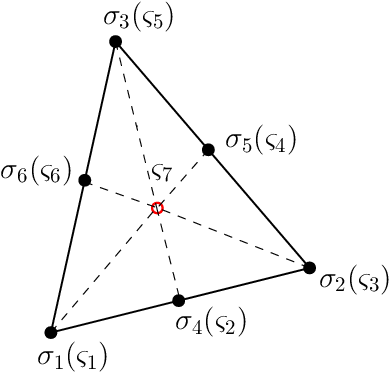}}
\caption{Geometry for the first-order scheme.\label{p2_ele_subcell}}
\end{figure}

\bla{
The boundary values $\mbf v_\sigma$ are updated using \eqref{2.12} in the first-order case, but with a modified $\Phi_\sigma^E$:
\begin{equation}\label{3.3}
\Phi_\sigma^E=\frac{1}{\vert C_\sigma\vert }\sum_{T^E_i, \sigma\in T^E_i}\Phi_{\sigma}^{T^E_i}(\mbf v_h)
\end{equation}
where $\vert C_\sigma\vert$ denotes the dual control volume associated with the DoF $\mbf v_\sigma$ and is defined as
\begin{equation}\label{3.7}
  \vert C_\sigma\vert =\sum_{E, \sigma\in E}\sum_{T^E_i, \sigma\in T^E_i} \frac{\vert T^E_i\vert}{3}=\sum_{E, \sigma\in E}\frac{\vert E\vert}{9}.
\end{equation} 

The residuals for each sub-triangle $T_i^E$ are defined as follows, inspired by monotone first-order residual distribution schemes, see \cite{Deconinck2007,Abgrall2006,RB09} for example. We use a version inspired by the local Lax-Friedrichs scheme:
\begin{equation}\label{3.4}
  \Phi_\sigma^{T^E_i}=\frac{1}{3}\int_{T^E_i} \big(\mbf J(\mbf v_h)\cdot\nabla\mbf v_h-\widetilde{\mbf S}(\mbf{v}_h, Z_h)\big) {\rm d}\mbf x+\alpha_{T^E_i}\big (\mbf {v}_\sigma-\xbar{\mbf v}_{T^E_i}\big )
\end{equation}
and apply a $P^1$ approximation on $T^E_i$, 
where $\xbar{\mbf v}_{T^E_i}$ (resp. $\xbar{Z}_{T^E_i}$) is the arithmetic average of the $\mbf v$'s (resp. $Z$'s) at three vertices of $T^E_i$ (hence we use the average value here).}
Finally, 
\begin{equation}\label{3.6}
  \alpha_{T^E_i}=\max\limits_{\substack{\sigma\in T^E_i,\\\mbf n_{b_i} \text{normals of~} T^E_i}}\rho\big ( \mbf J(\mbf v_\sigma)\cdot \mbf n_{b_i}\big ).
\end{equation}

It is well-known that the local Lax-Friedrichs' finite volume scheme, defined by \eqref{2.7} and \eqref{3.1}--\eqref{3.2}, for evolving the cell average of water depth $h$ and bulk temperature $h\theta$, is positivity-preserving under the standard Courant--Friedrichs--Lewy (CFL) condition. \vla{The positivity-preserving property of local Lax--Friedrichs' schemes for the evolution of point values is discussed in \cite[Section 4.2]{RB09} and can be omitted here for brevity.}

\section{Numerical examples}\label{sec4}
In this section, we demonstrate the performance of the proposed third-order WB PAMPA scheme through several numerical examples. In all test cases, cell averages are updated using the conservative formulation \eqref{1.1}--\eqref{1.1a}, while point values are evolved according to the non-conservative pressure-momentum-temperature formulation \eqref{2.1}--\eqref{2.2a}. Numerical results obtained with the WB PAMPA scheme and  its non-WB counterparts will be compared.

In all numerical experiments, unless otherwise specified, the zero-order extrapolation boundary conditions are used. All the meshes are unstructured and generated by GMSH \cite{Geuzaine2009}. \vla{In Examples 1--5, we set the acceleration due to gravity as $g=9.812$ while in Examples 6--9, we take $g=1$.} \bla{For some results, we compute the discrete $L^1$- and $L^{\infty}$-errors for the cell averages $\xbar{\mbf u}_E$ and $\xbar{\mbf u}'_E$ by
\begin{equation*}
  \Vert\xbar{\mbf u}_E-\xbar{\mbf u}'_E\Vert_{L^1}=\frac{\sum\limits_{E\in\mathcal{E}}\vert E\vert\vert\xbar{\mbf u}_E-\xbar{\mbf u}'_E\vert}{\sum\limits_{E\in \mathcal{E}}\vert E\vert},\quad 
  \Vert\xbar{\mbf u}_E-\xbar{\mbf u}'_E\Vert_{L^\infty}=\max\limits_{E\in\mathcal{E}}\vert\xbar{\mbf u}_E-\xbar{\mbf u}'_E\vert
\end{equation*}
and for the point values ${\mbf u}_\sigma$ and ${\mbf u}'_\sigma$
\begin{equation*}
  \Vert{\mbf u}_\sigma-{\mbf u}'_\sigma\Vert_{L^1}=\frac{\sum\limits_{E\in\mathcal{E},\sigma\in E}\vert C_\sigma\vert\vert{\mbf u}_\sigma-{\mbf u}'_\sigma\vert}{\sum\limits_{E\in \mathcal{E},\sigma\in E}\vert C_\sigma\vert},\quad \Vert{\mbf u}_\sigma-{\mbf u}'_\sigma\Vert_{L^\infty}=\max\limits_{E\in\mathcal{E},\sigma\in E}\vert{\mbf u}_\sigma-{\mbf u}'_\sigma\vert.
\end{equation*}
respectively.}

\subsubsection*{Example 1---Accuracy Test}
The goal of the first example is to experimentally check the high-order accuracy of the proposed schemes when applied to the following two-dimensional problem, \bla{taken from \cite{Clain_MOOD_SWE}}:
\begin{equation*}
 Z(r)=0.2e^{\frac{1-r^2}{2}},~ h(r,0)=1-\frac{1}{4g}e^{2(1-r^2)}-Z(r),~ \bm\nu(r,0)=-\mbf x^\bot e^{1-r^2},~ \theta(r,0)=1,
\end{equation*}
where $r=\sqrt{x^2+y^2}$ and $\mbf x^\bot=(-y,x)$. The computational domain is $[-10,10]\times[-10,10]$. Note that this is a stationary equilibrium of the system \eqref{1.1}--\eqref{1.1a} so that the exact solution coincides with the initial condition at any time. \bla{Dirichlet boundary conditions are imposed by evaluating the exact solution at the quadrature points on the domain boundary.}

We compute the numerical solution until the final time $t =1$ using the proposed WB PAMPA scheme. The unstructured meshes are obtained using the Frontal-Delaunay \bla{option of GMSH} with mesh characteristic lengthes $1.0$, $0.5$, $0.25$, and $0.15$. We measure the discrete $L^1$-errors for cell averages as well as the point values of the conservative variables and then compute the rate of convergence. Tables \ref{tab1} and \ref{tab2} report the results obtained by the WB PAMPA scheme, from which we can see that the expected experimental third order of accuracy is achieved for the studied WB PAMPA scheme.

\begin{table}[ht!]
\color{red}
\caption{\label{tab1} Example 1: $L^1$ errors and numerical orders of accuracy for cell averages.}
\begin{center}
\begin{tabular}{|c||c|c|c|c|c|c|}
\hline
$\max\limits_j\sqrt{\vert E_j\vert}$& $h$ &rate&  $hu$& rate&$hv$& rate \\
\hline
$0.7510$     &  $1.693\times10^{-4}$&- &  $6.368\times10^{-5}$& -     &        $7.482\times10^{-4}$  & - \\
$0.3681 $ &  $3.352\times10^{-5}$&$2.27$&  $1.310\times10^{-4}$& $2.22$&       $1.265 \times10^{-4}$  & $2.49$\\
$0.1954$ &  $5.296\times10^{-6}$& $2.91$&  $1.831\times10^{-5}$& $3.11$&       $1.826\times10^{-5}$  & $3.05$\\
$0.1250$ &  $1.484\times10^{-6}$& $2.85$&  $5.152\times10^{-6}$& $2.84$&       $5.144\times10^{-6}$  & $2.84$\\
\hline
\end{tabular}
\end{center}
\end{table}

\begin{table}[ht!]
\color{red}
\caption{\label{tab2} Example 1: Same as in Table \ref{tab1} but for point values.}
\begin{center}
\begin{tabular}{|c||c|c|c|c|c|c|}
\hline
$\max\limits_\sigma\sqrt{\vert C_\sigma\vert}$& $h$ &rate&  $hu$& rate&$hv$& rate \\
\hline
$0.6128$     &  $1.720\times10^{-4}$&- &  $1.040\times10^{-3}$& -     &        $1.093\times10^{-3}$  & - \\
$0.3025 $ &  $3.376\times10^{-5}$&$2.31$&  $1.714\times10^{-4}$& $2.55$&       $1.755 \times10^{-4}$  & $2.59$\\
$0.1552$ &  $5.275\times10^{-6}$& $2.78$&  $2.442\times10^{-5}$& $2.92$&       $2.456\times10^{-5}$  & $2.95$\\
$0.9903$ &  $1.484\times0^{-6}$& $2.82$&  $6.853\times10^{-6}$& $2.83$&       $6.907\times10^{-6}$  & $2.82$\\
\hline
\end{tabular}
\end{center}
\end{table}

\subsubsection*{Example 2---Traveling Vortex}
In the second example, we consider a traveling vortex initially located at $(0,0)$ that propagates with a speed of $\bm\nu_{\rm ad}=(1,\frac{\sqrt{2}}{2})$ in a square computational domain $[-10,10]\times[-10,10]$. The initial conditions are a slightly modification of those from the previous example, given by 
\begin{equation}\label{Ex2ini}
h(r,0)=1-\frac{1}{4g}e^{2(1-r^2)},~ \bm\nu(r,0)=\bm\nu_{\rm ad}-\mbf x^\bot e^{1-r^2},~ \theta(r,0)=1,
\end{equation}
\bla{and the same Dirichlet boundary conditions as in the previous example are used.}

We first consider a flat bottom topography, i.e., $Z(x,y)=0$. In this case, the vortex travels in the direction $(1,\frac{\sqrt{2}}{2})$, and we can derive the exact solution, which is defined as
\begin{equation*}
  h(\mbf x,t)=h(\mbf x-t\bm\nu_{\rm ad},0),
  \quad \bm\nu(\mbf x,t)=\bm\nu(\mbf x-t\bm\nu_{\rm ad},0),
  \quad\theta(\mbf x,t)=\theta(\mbf x-t\bm\nu_{\rm ad},0).
\end{equation*}

In Figure \ref{Ex2_h}, we show the computed water depth $h$ and the exact solution (represented by the black curve, which coincides with the numerical solution) on a mesh with 23246 triangular elements and 46893 DoFs at the final time $t=5$. As one can observe, the studied WB PAMPA scheme can capture the traveling vortex very accurately. The vortex moves in the correct direction and reaches the right final position. Moreover, at this scale, the difference between the exact and numerical solutions is \bla{insignificant}.  

\begin{figure}[ht!]
\centerline{\subfigure[$\xbar{h}_E$]{\includegraphics[trim=1.8cm 2.5cm 2.5cm 2.5cm,clip,width=5.0cm]{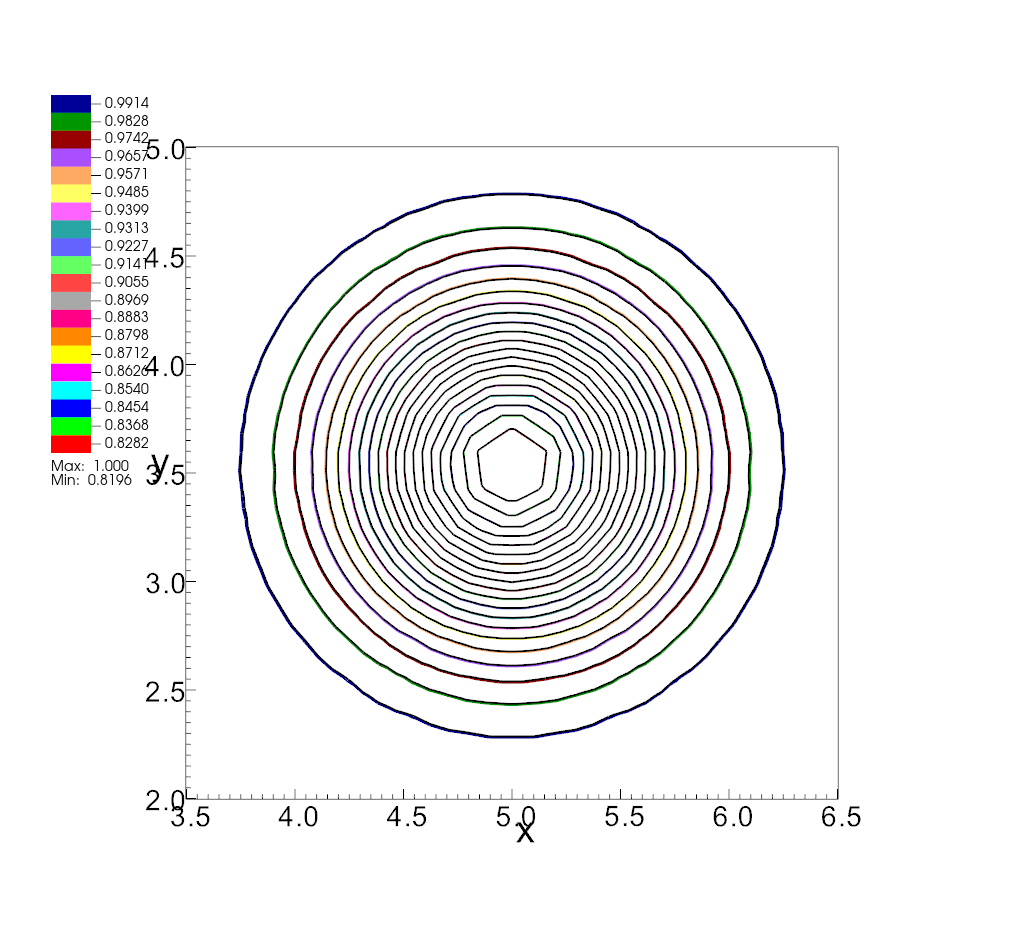}}\hspace*{0.5cm}
	\subfigure[$h_\sigma$]{\includegraphics[trim=1.8cm 2.5cm 2.5cm 2.5cm,clip,width=5.0cm]{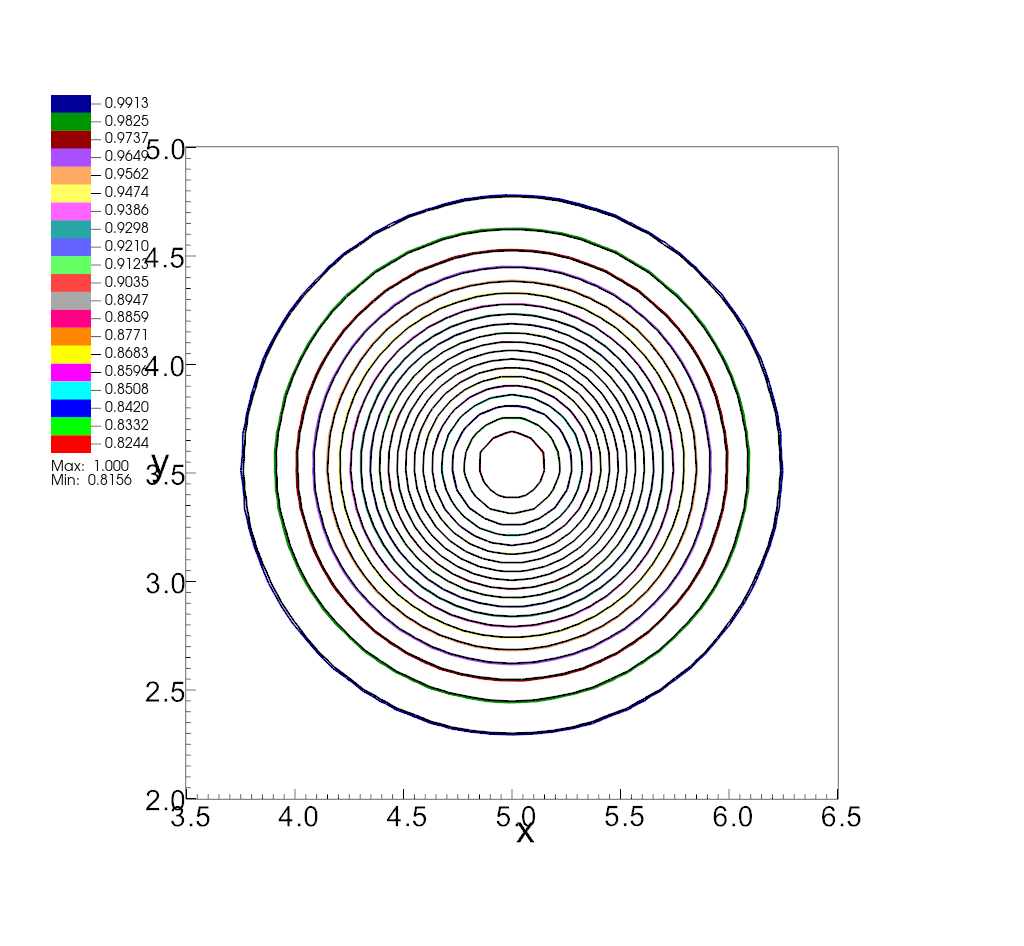}}}
\caption{\sf Example 2: Cell averages and point values of water depth at $t=5$.\label{Ex2_h}}
\end{figure}

Next, we introduce a non-flat bottom topography, defined as
\begin{equation*}
  Z(\mbf x)=0.2e^{\frac{1}{2}-2\Vert\mbf x-2\Vert^2},
\end{equation*}
while keeping \bla{the rest as} in \eqref{Ex2ini}. In this case, the water depth is perturbed due to the non-flat bottom, but the vortex still travels in the same direction. In Figure \ref{Ex2_h1}, we show the water depth at times $t=0.39$ and $t=1$. We can see from the top row of Figure \ref{Ex2_h1} that at $t=0.39$, the front part of the vortex has just passed through the bottom topography and the tail part remains behind it. The bottom row of Figure \ref{Ex2_h1} presents the water depth at time $t=1$, corresponding to the moment when the tail part of the vortex is about to pass over the bottom. From these results, we can conclude that the studied WB PAMPA scheme can accurately capture the dynamical traveling processes of the vortex over a non-flat bottom topography. This demonstrates great potential for applying these schemes to the study of rotating shallow water equations and the simulation of cyclonic vortices; see, e.g., \cite{KLZ_hurricane, KLZ_2D}.    
\begin{figure}[ht!]
\centerline{\subfigure[$\xbar{h}_E$ at $t=0.39$]{\includegraphics[trim=1.8cm 2.5cm 2.5cm 2.5cm,clip,width=5.0cm]{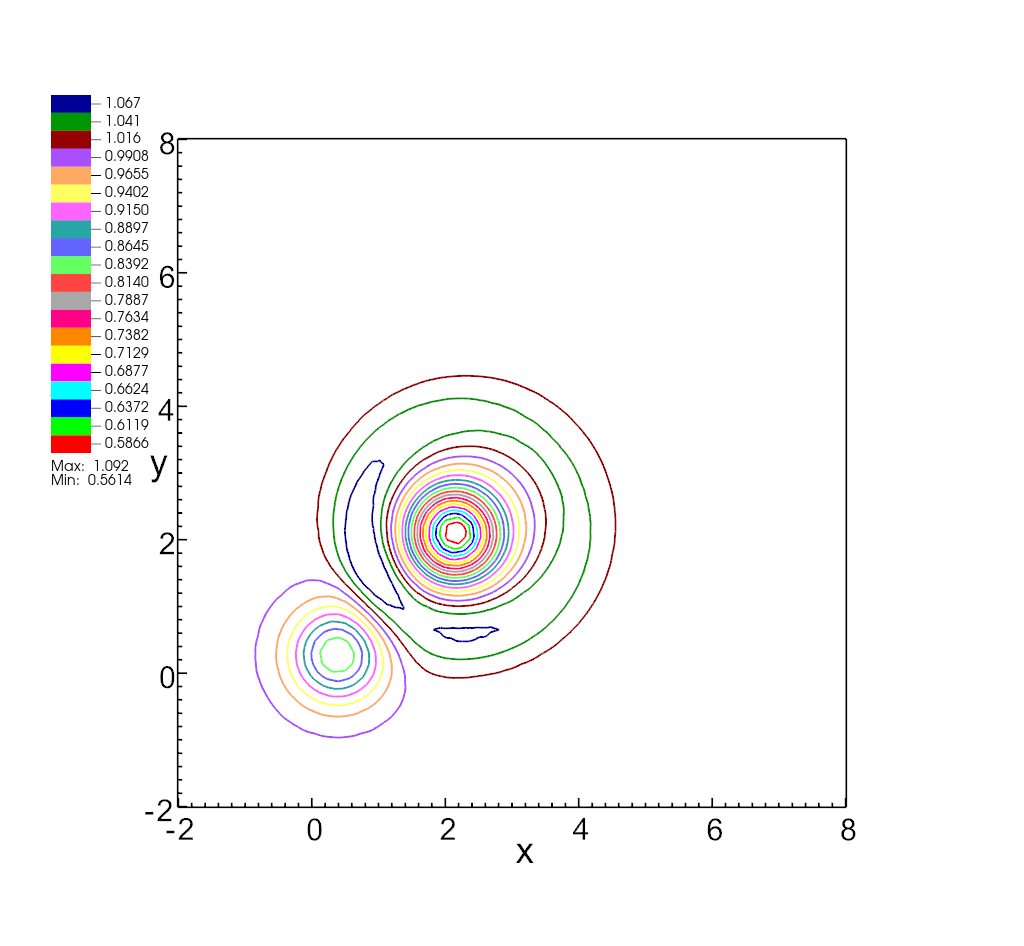}}\hspace*{0.5cm}
	\subfigure[$h_\sigma$ at $t=0.39$]{\includegraphics[trim=1.8cm 2.5cm 2.5cm 2.5cm,clip,width=5.0cm]{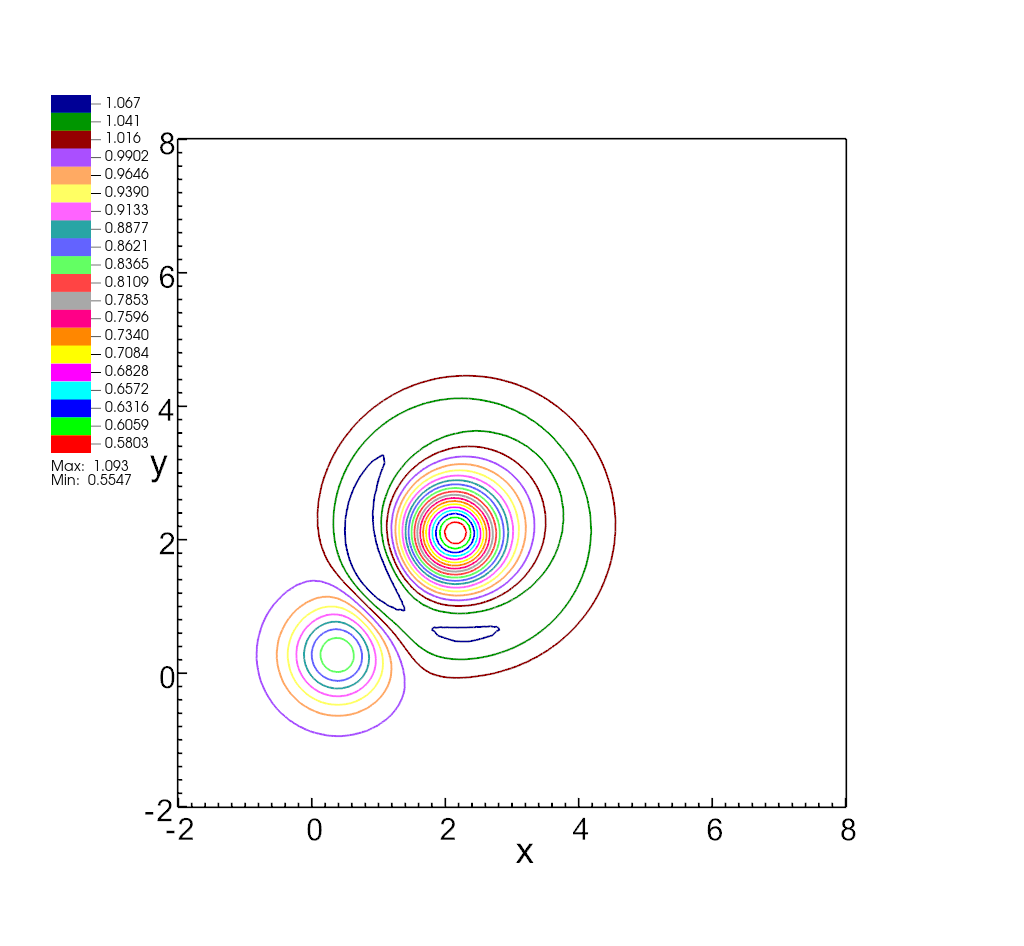}}}
\vskip1pt
\centerline{\subfigure[$\xbar{h}_E$ at $t=1$]{\includegraphics[trim=1.8cm 2.5cm 2.5cm 2.5cm,clip,width=5.0cm]{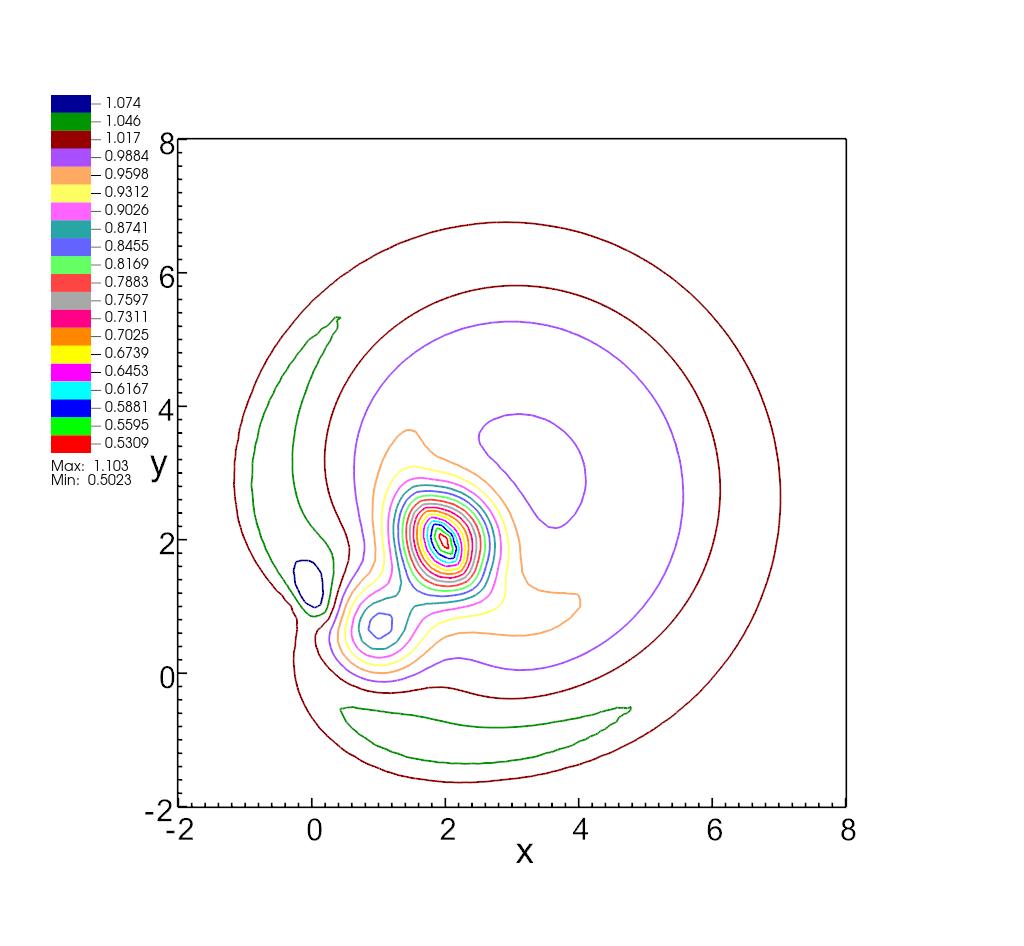}}\hspace*{0.5cm}
	\subfigure[$h_\sigma$ at $t=1$]{\includegraphics[trim=1.8cm 2.5cm 2.5cm 2.5cm,clip,width=5.0cm]{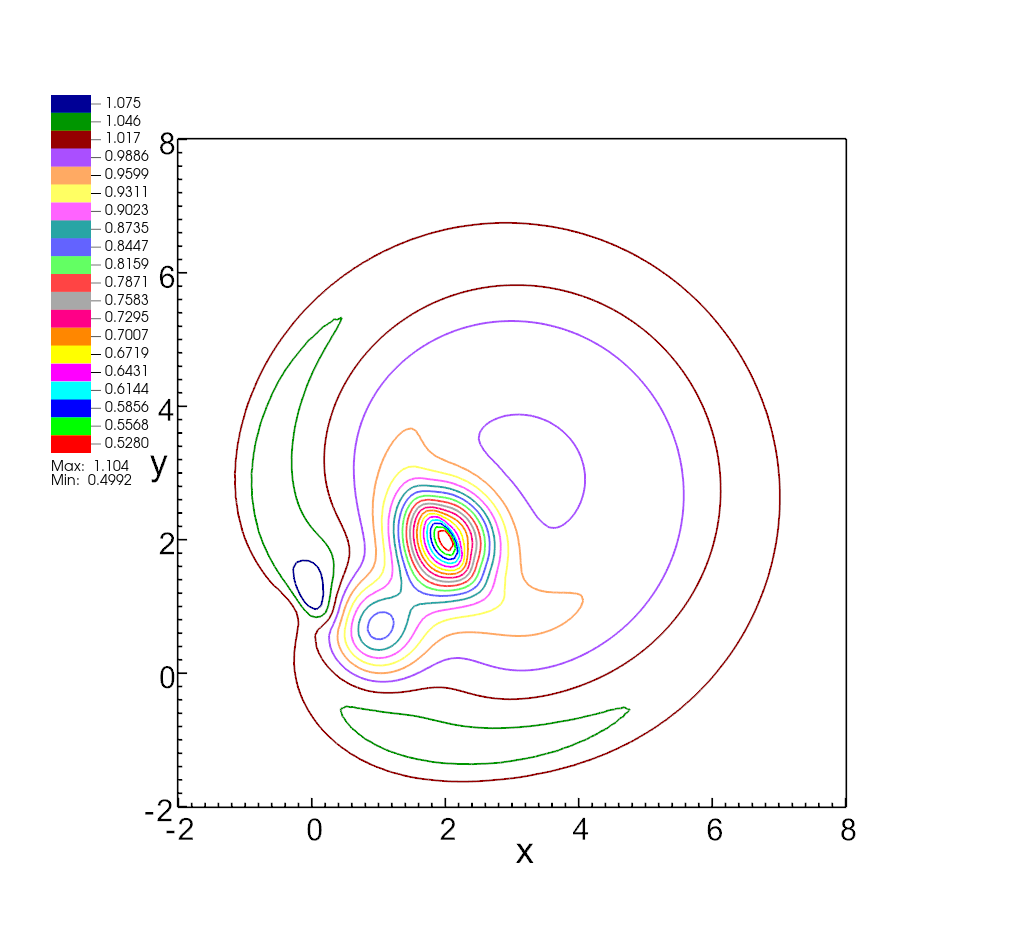}}}
\caption{\sf Example 2: Cell averages and point values of water depth at $t=0.39$ (top row) and $t=1$ (bottom row) computed by the WB PAMPA scheme over non-flat bottom topography.\label{Ex2_h1}}
\end{figure}


\subsubsection*{Example 3---Well-balanced Test: Flows over Three Humps}
In the third example, \bla{taken from \cite{DelGrosso2024}}, we verify the WB property of the proposed two schemes towards the ``lake at rest'' steady-state solution. The computational domain is $[0,40]\times[0,40]$ and the bottom topography is defined as:
\begin{equation*}
  Z(\mbf x)=\max\Big(0,1-\frac{1}{8}r_{10,11},1-\frac{3}{10}r_{10,31},1-\frac{4}{10}r_{27,20}\Big),
\end{equation*} 
where $r_{a,b}=\sqrt{(x-a)^2+(y-b)^2}$. The initial data satisfy a lake at rest solution, that is,
\begin{equation*}
  h(\mbf x,0)=4-Z(\mbf x),\quad \bm\nu(\mbf x,0)=\mbf 0,\quad \theta(\mbf x,0)=1. 
\end{equation*} 

We compute the solution until a final time $t=20$ on the triangular mesh consisting of $3718$ elements and $7597$ DoFs. In Figure \ref{Ex3_h}, we present the results of water depth with 20 isolines obtained by the WB PAMPA scheme. We can see that the initial profile of water depth is preserved. 

\begin{figure}[ht!]
\centerline{\subfigure[$\xbar{h}_E$ at $t=20$]{\includegraphics[trim=1.8cm 2.5cm 2.5cm 2.5cm,clip,width=5.0cm]{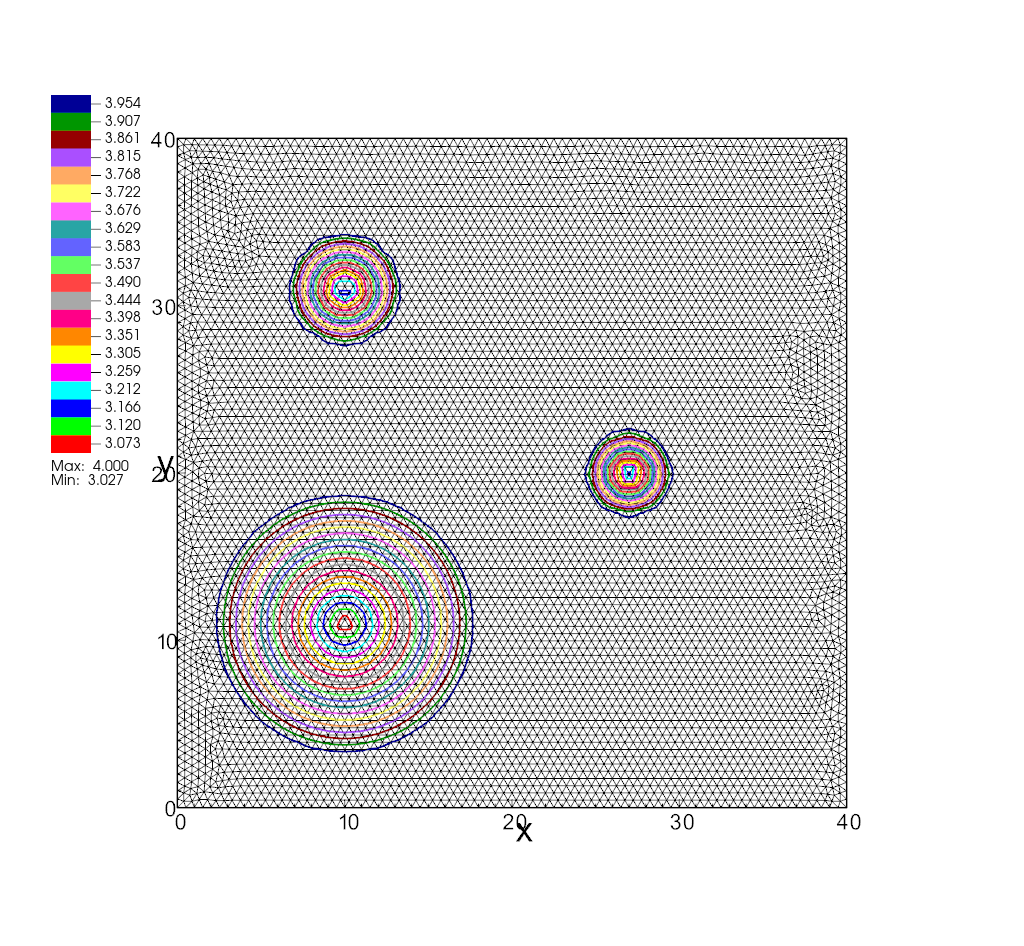}}\hspace*{0.5cm}
	\subfigure[$h_\sigma$ at $t=20$]{\includegraphics[trim=1.8cm 2.5cm 2.5cm 2.5cm,clip,width=5.0cm]{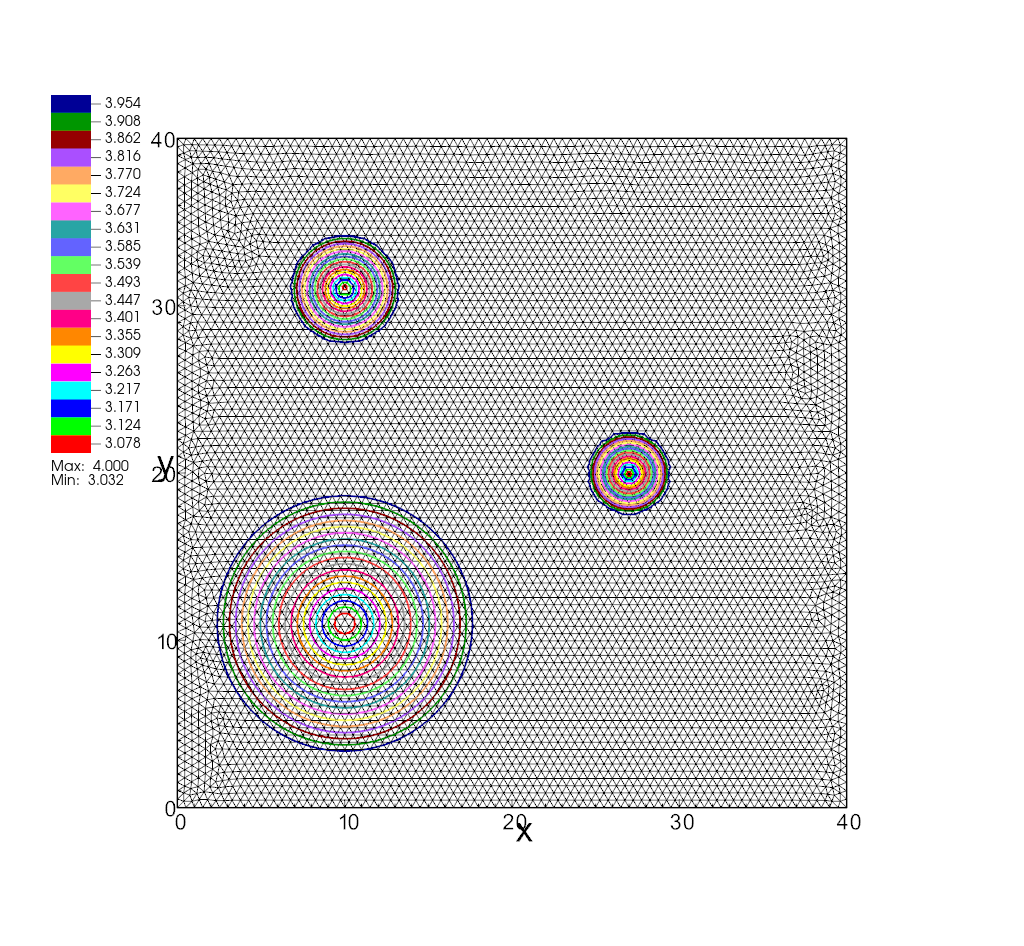}}}
\caption{\sf Example 3: Numerical solution (cell averages and point values of water depth) at $t=20$ computed by the WB PAMPA scheme.\label{Ex3_h}}
\end{figure}

Furthermore, in order to demonstrate that the lake at rest equilibria is indeed maintained up to round-off error, we compute the discrete $L^1$- and $L^\infty$-errors for the numerical solution (cell averages and point values of water depth $h$ and momentum $hu$ (similar behavior for $hv$ and omitted here)). \bla{For comparison, a non-WB version of the proposed PAMPA scheme is also tested. In this version, we use only the 3-point Gauss--Lobatto quadrature rule to compute the edge integral in \eqref{2.7}. While this quadrature rule maintains numerical accuracy, it fails to exactly evaluate the edge integral at the ``lake at rest'' state, rendering the scheme non-WB.} We show the obtained results in Tables \ref{Ex3:tab1} and \ref{Ex3:tab2}. As one can clearly see, all errors computed by the WB PAMPA scheme are within machine accuracy, which verifies its WB property. \bla{The non-WB scheme fails to maintain the errors within machine accuracy, further demonstrating the necessity of exactly evaluating the edge integral and employing \cref{alg:Gaussian_integrals}.}

\begin{table}[ht!]
\caption{\sf Example 3: $L^1$- and $L^\infty$-errors in $\mbf u_E$ for the ``lake at rest'' state obtained by WB PAMPA scheme and its non-WB counterpart.\label{Ex3:tab1}}
\begin{center}
\begin{tabular}{|c| c c| c c| }\hline
\multicolumn{1}{|c|}{\multirow{2}{*}{Schemes}}  &\multicolumn{2}{c|}{$h$} &\multicolumn{2}{c|}{$hu$} \\ \cline{2-5}
  & $L^1$-error   & $L^\infty$-error  & $L^1$-error  &$L^\infty$-error  \\ \hline
 \multicolumn{1}{|c|}{\multirow{1}{*}{WB}}  &$1.22\times10^{-15}$ & $8.84\times10^{-14}$ & $7.96\times10^{-14}$ & $9.16\times10^{-12}$  \\  \hline
 \multicolumn{1}{|c|}{\multirow{1}{*}{non-WB}}  &$1.86\times10^{-5}$ & $3.84\times10^{-4}$ & $6.35\times10^{-5}$& $2.56\times10^{-3}$\\ \hline
\end{tabular}
\end{center}
\end{table}

\begin{table}[ht!]
\caption{\sf Example 3: Same as in Table \ref{Ex3:tab1} but for the point values $\mbf u_\sigma$.\label{Ex3:tab2}}
\begin{center}
\begin{tabular}{|c| c c| c c|}\hline
\multicolumn{1}{|c|}{\multirow{2}{*}{Schemes}}  &\multicolumn{2}{c|}{$h$} &\multicolumn{2}{c|}{$hu$} \\ \cline{2-5}
  & $L^1$-error   & $L^\infty$-error  & $L^1$-error  &$L^\infty$-error  \\ \hline
 \multicolumn{1}{|c|}{\multirow{1}{*}{WB}}  &$1.20\times10^{-16}$ & $3.55\times10^{-15}$ & $7.37\times10^{-14}$& $5.61\times10^{-13}$ \\  \hline
 \multicolumn{1}{|c|}{\multirow{1}{*}{non-WB}}  &$1.13\times10^{-6}$ & $9.08\times10^{-4}$ & $4.25\times10^{-4}$& $1.79\times10^{-2}$ \\  \hline
\end{tabular}
\end{center}
\end{table}

\subsection*{Example 4---Small Perturbation of a Steady-State Solution}
In the fourth example, we perform an additional test to verify the WB property of the proposed schemes by imposing a perturbation into the stationary solution. Non well-balanced schemes often struggle to capture the correct propagation of a perturbation when its magnitude is smaller than the spatial discretization size. While mesh refinement can improve this issue, it is typically not a practically affordable solution. Therefore, well-balanced schemes are highly valuable. In what follows, we consider a non-flat bottom topography defined by
\begin{equation*}
  Z(\mbf x)=0.8e^{-50\Vert\mbf x-1\Vert^2},
\end{equation*}  
prescribed in a computational domain $[0,2]\times[0,2]$. The initial conditions are given by
\begin{equation*}
  h(\mbf x,0)=1-Z(\mbf x)+10^{-4}e^{-20\Vert\mbf x-0.8\Vert^2},\quad \bm\nu(\mbf x,0)=\mbf0,\quad \theta(\mbf x,0)=1.
\end{equation*}

We compute the numerical results using the WB PAMPA scheme and a non well-balanced scheme \bla{as described in the previous example. We} show the propagation of the perturbation in the cell averages of water depth $h$ at time $t=0.1$. Figure \ref{Ex4_h} shows the results obtained using a coarse mesh with 944 triangular elements and 1969 DoFs.  We can clearly see that the WB PAMPA scheme can accurately capture the propagation of small perturbations, while the non well-balanced scheme shows substantial differences. 

\begin{figure}[ht!]
\centerline{\subfigure[WB PAMPA]{\includegraphics[trim=0.2cm 1.5cm 2.5cm 2.5cm,clip,width=5.0cm]{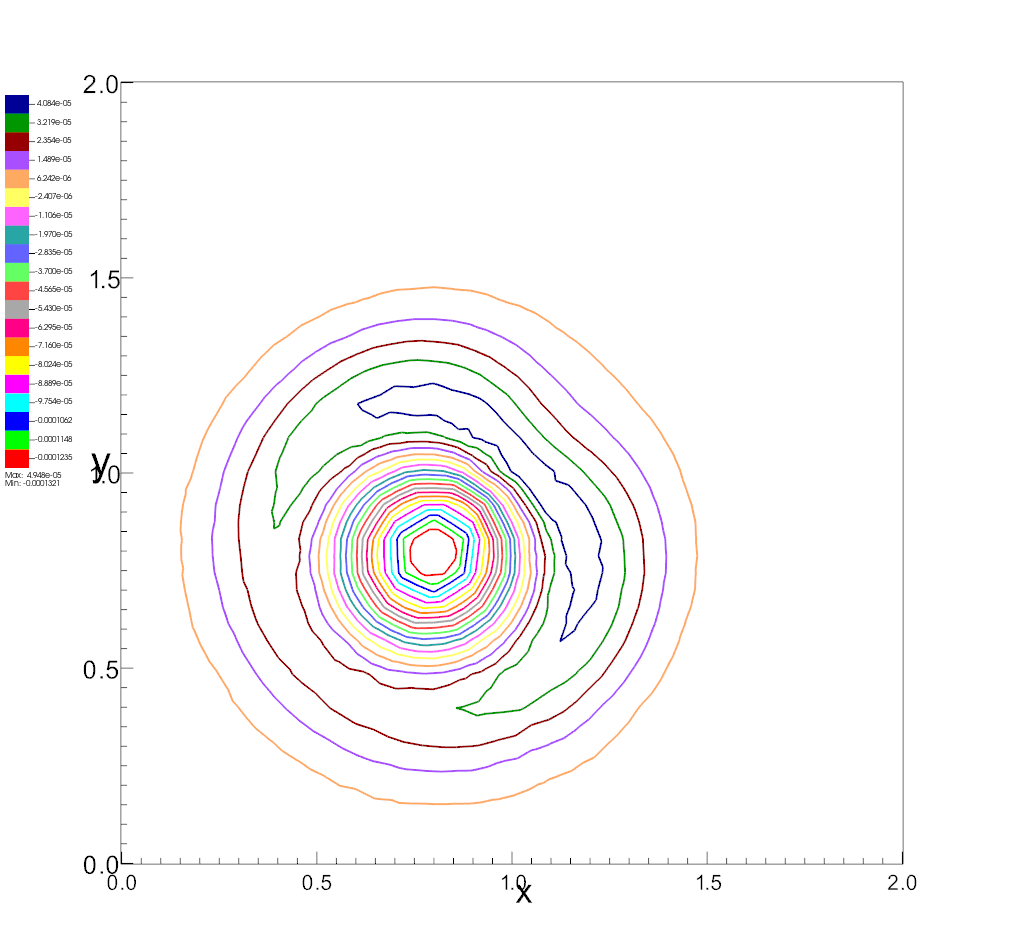}}\hspace*{0.5cm}
	\subfigure[non WB]{\includegraphics[trim=0.2cm 1.5cm 2.5cm 2.5cm,clip,width=5.0cm]{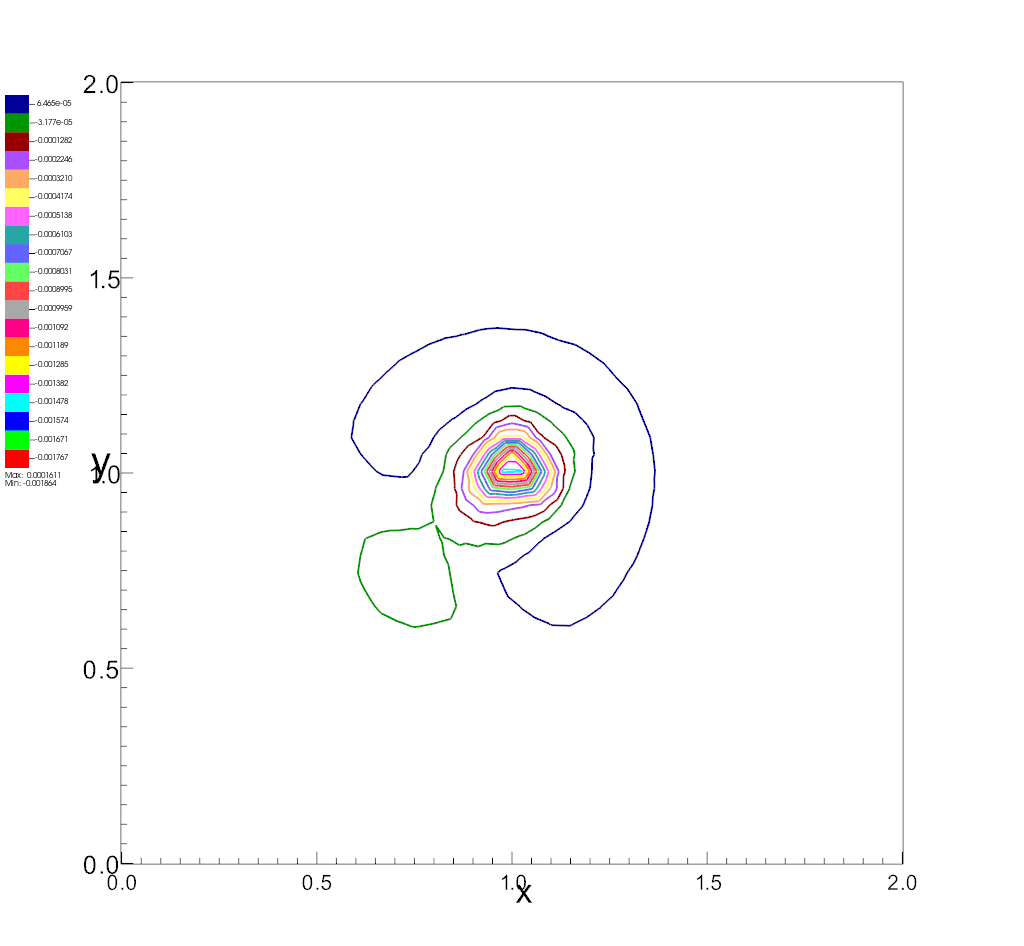}}}
\caption{\sf Example 4: Perturbation in the cell averages of water depth at $t=0.1$ computed by the WB PAMPA (left) and non well-balanced (right) schemes. A coarse mesh with 944 triangular elements and 1969 DoFs is used.\label{Ex4_h}}
\end{figure}

To further illustrate the effectiveness of WB schemes, we refine the mesh to 5824 elements and 11849 DoFs and plot the results in Figure \ref{Ex4_h2}. As one can observe, the result computed by the non well-balanced scheme now align more closely with those obtained using the WB PAMPA scheme. For the WB PAMPA scheme, the numerical solution remains consistent between the coarse and fine meshes. This also demonstrate the ability of WB schemes to accurately capture small perturbations with lower computational cost.

\begin{figure}[ht!]
\centerline{\subfigure[WB PAMPA]{\includegraphics[trim=0.2cm 1.5cm 2.5cm 2.5cm,clip,width=5.0cm]{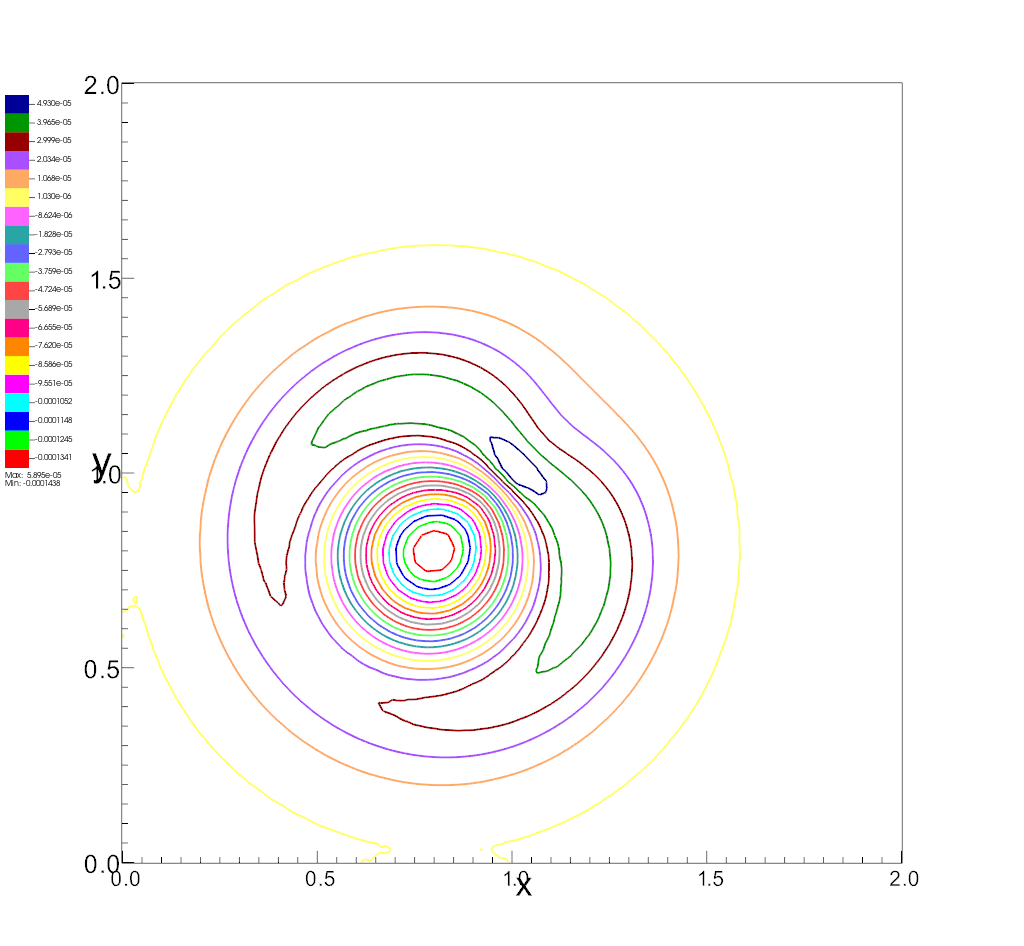}}\hspace*{0.5cm}
	\subfigure[non WB]{\includegraphics[trim=0.2cm 1.5cm 2.5cm 2.5cm,clip,width=5.0cm]{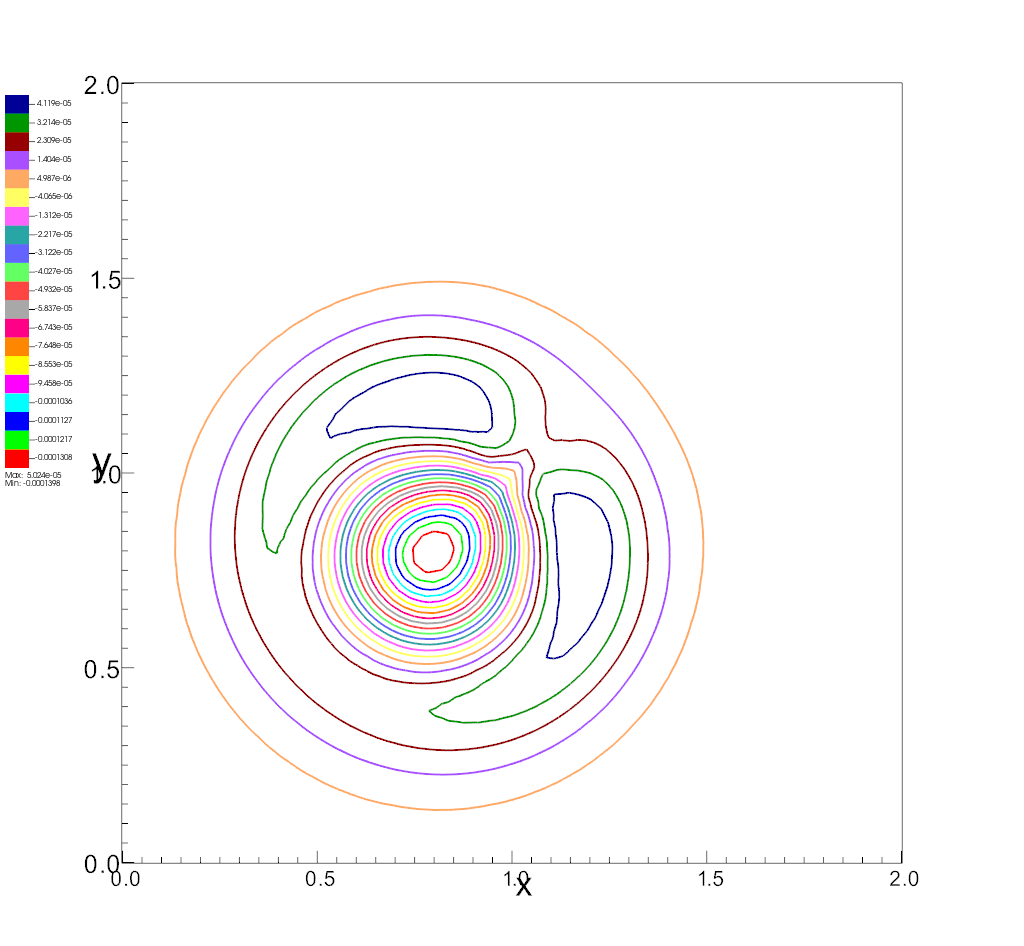}}}
\caption{\sf Example 4: Same as in Figure \ref{Ex4_h} but using a fine mesh with 5824 elements and 11849 DoFs.\label{Ex4_h2}}
\end{figure}

\subsection*{Example 5---Circular Dam-break Problem}
In the fifth example, we consider a circular dam-break problem on a flat bottom topography (i.e., $Z(\mbf x)=0$), which serves as a benchmark test case to assess the behaviour of numerical schemes. The computational domain is a square $[0,50]\times[0,50]$, divided into two regions by a cylindrical wall of radius $r_0=11$. The initial conditions are given by
\begin{equation*}
  h(\mbf x,0)=\left\{
  \begin{aligned}
  &10 &&\mbox{if}~\Vert\mbf x-25\Vert\leq r_0,\\
  &1  &&\mbox{otherwise},
  \end{aligned}\right. \quad \bm\nu(\mbf x,0)=\mbf 0,\quad \theta(\mbf x,0)=1.
\end{equation*}  

We run simulations using the WB PAMPA scheme on a triangular mesh with 23266 elements and 46933 DoFs. As time progresses from the initial state $t=0$, the water begins to drain, and the waves start to propagate radially and symmetrically. We plot the numerical solution of water depth at \vla{times $t=0.447$ and $t=0.69$ in the top row of Figures \ref{Ex5_h} and \ref{Ex2_h1}}. The studied scheme produce correct results, with the waves propagating uniformly and symmetrically. 

\begin{figure}[ht!]
\centerline{\subfigure[$\xbar{h}_E$ at $t=0.447$]{\includegraphics[trim=1.8cm 2.25cm 1.1cm 1.2cm,clip,width=5.0cm]{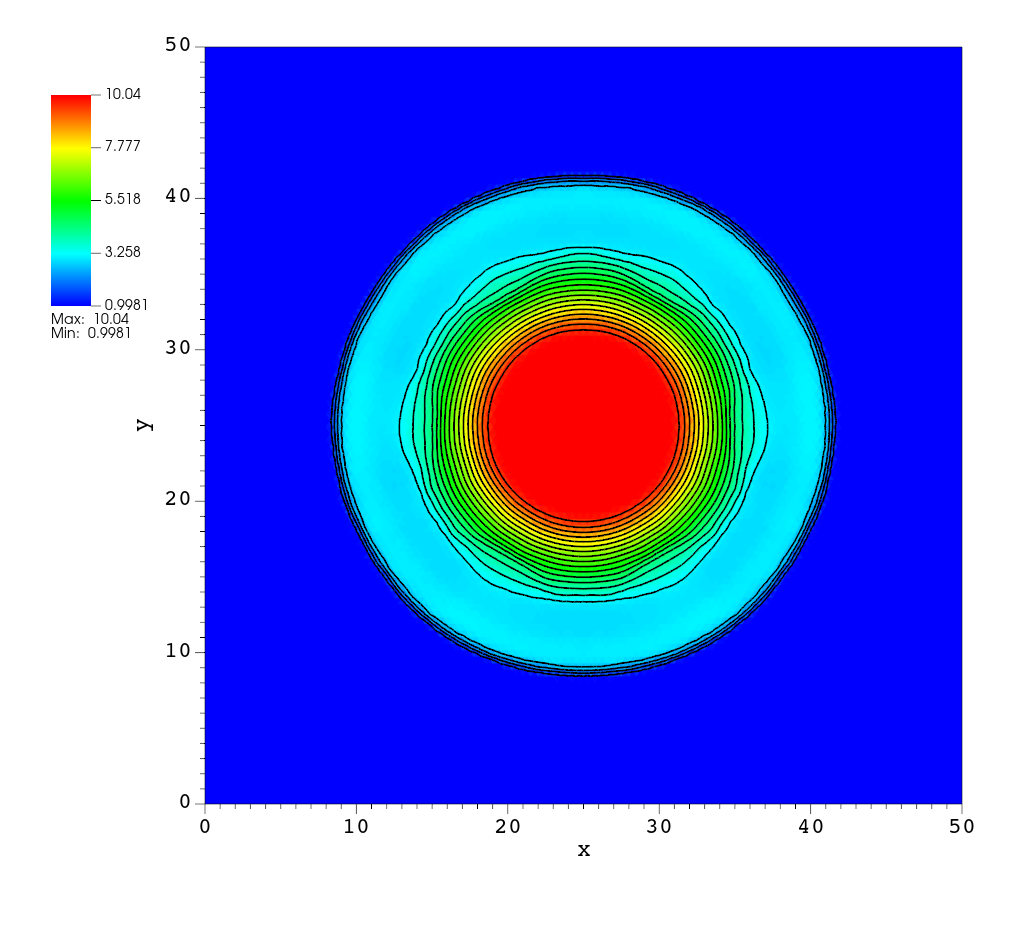}}\hspace*{0.15cm}
	\subfigure[$h_\sigma$ at $t=0.447$]{\includegraphics[trim=1.8cm 2.25cm 1.1cm 1.2cm,clip,width=5.0cm]{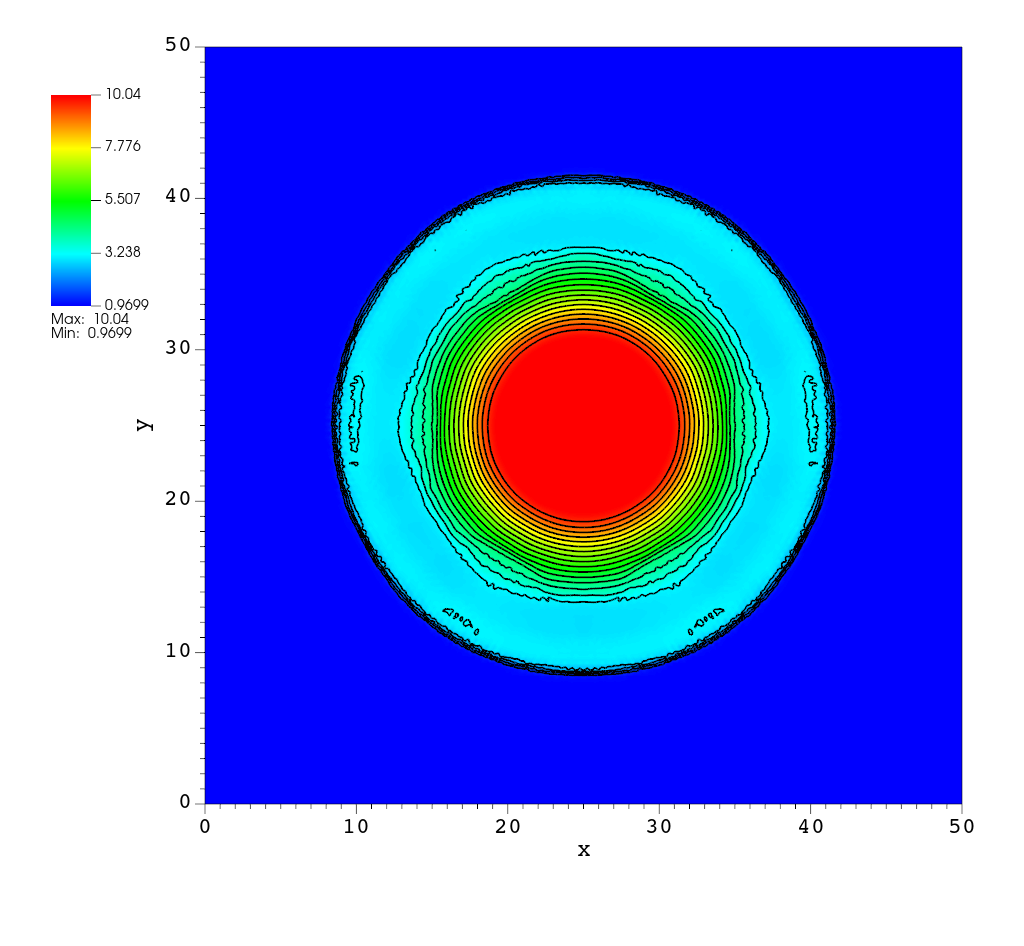}}}
\vskip5pt
\centerline{\subfigure[flag on elements at $t=0.447$]{\includegraphics[trim=1.8cm 2.25cm 1.1cm 1.2cm,clip,width=5.0cm]{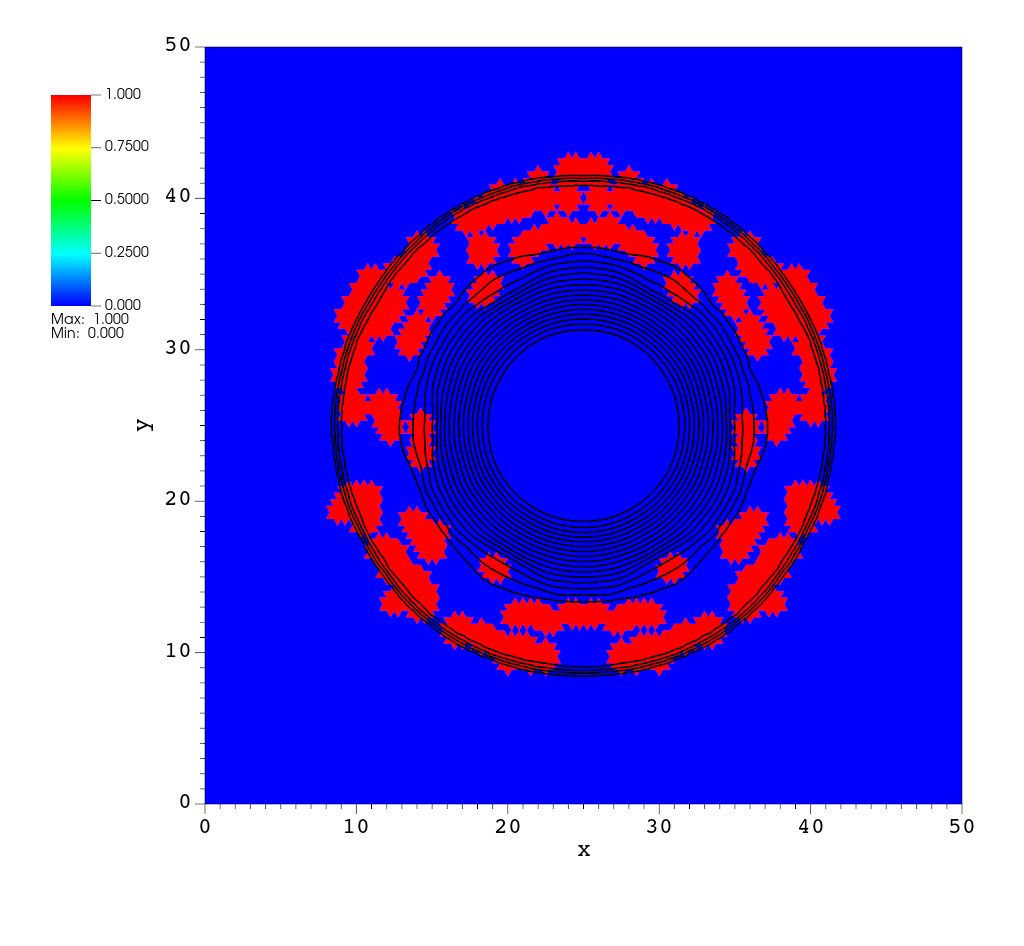}}\hspace*{0.15cm}
	\subfigure[flag on DoFs at $t=0.447$]{\includegraphics[trim=1.8cm 2.25cm 1.1cm 1.2cm,clip,width=5.0cm]{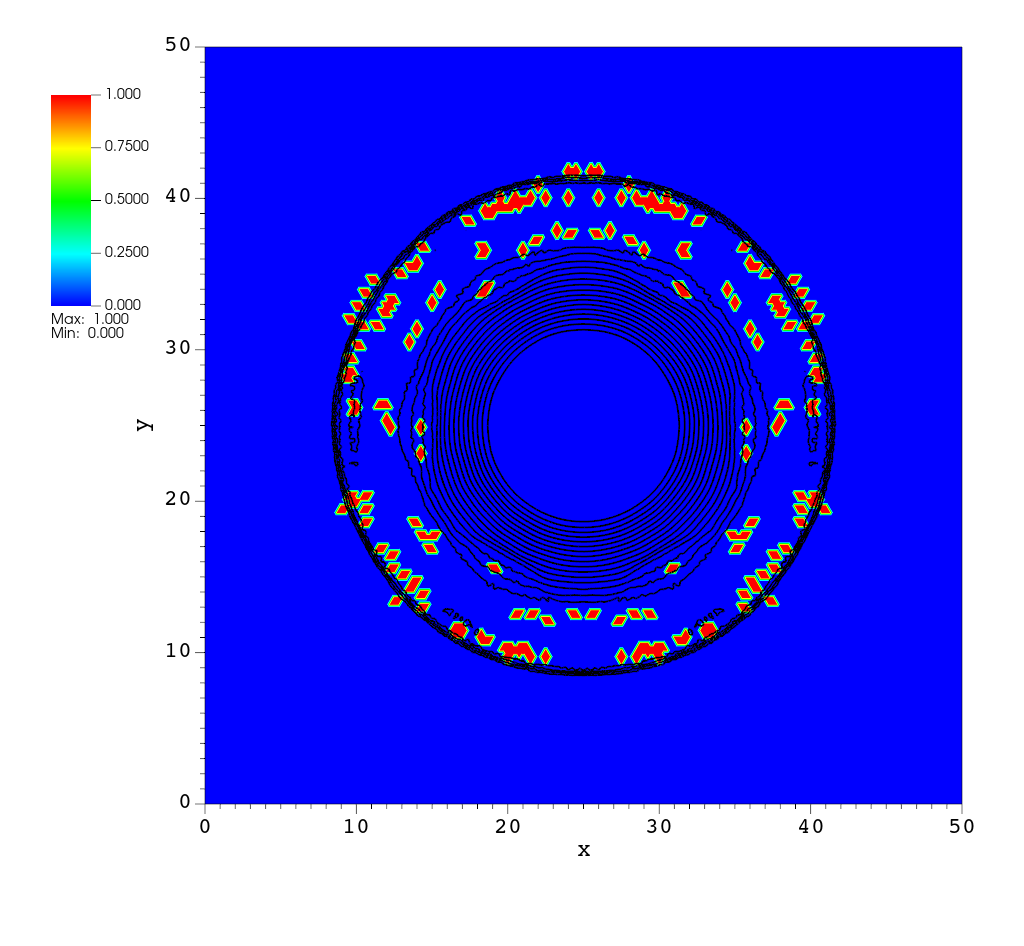}}}
\caption{\sf Example 5: Cell averages and point values of water depth (top row) and flags at the positions where the first-order scheme is used (bottom row) at $t=0.447$.\label{Ex5_h}}
\end{figure}

\begin{figure}[ht!]
\centerline{\subfigure[$\xbar{h}_E$ at $t=0.69$]{\includegraphics[trim=1.8cm 2.25cm 1.1cm 1.2cm,clip,width=5.0cm]{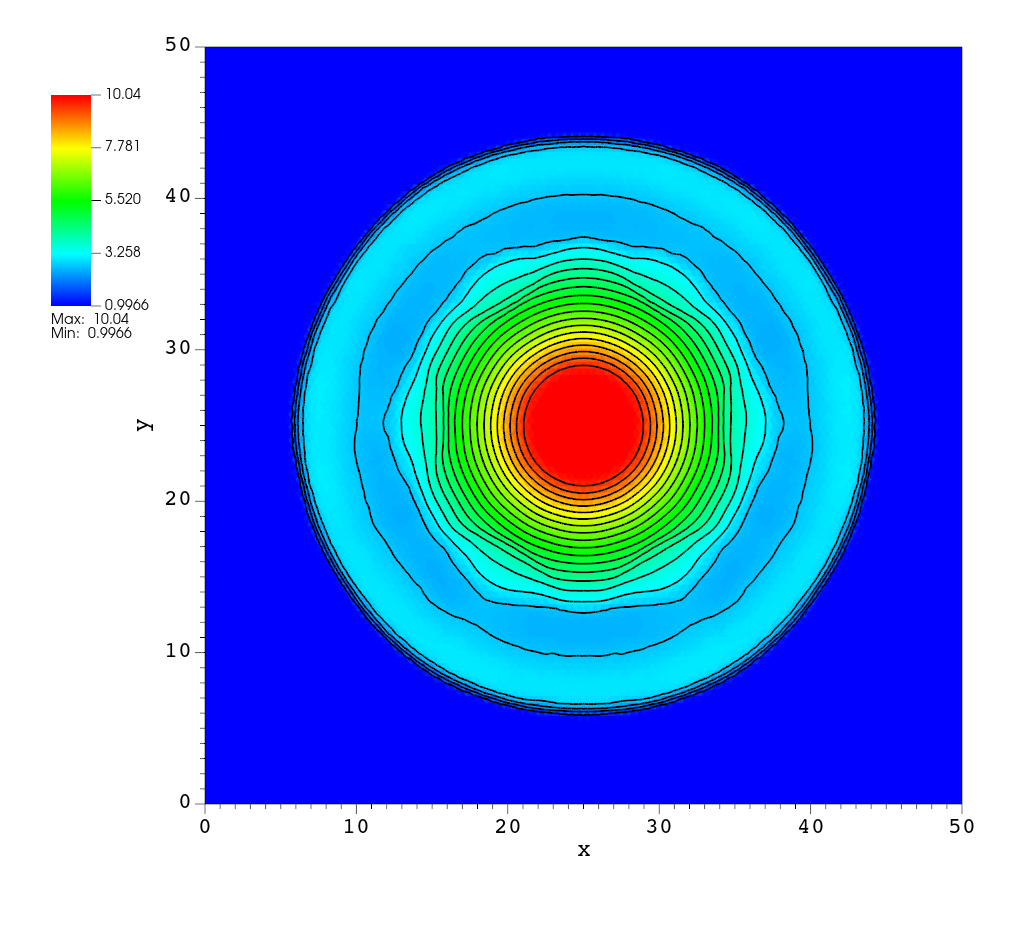}}\hspace*{0.15cm}
	\subfigure[$h_\sigma$ at $t=0.69$]{\includegraphics[trim=1.8cm 2.25cm 1.1cm 1.2cm,clip,width=5.0cm]{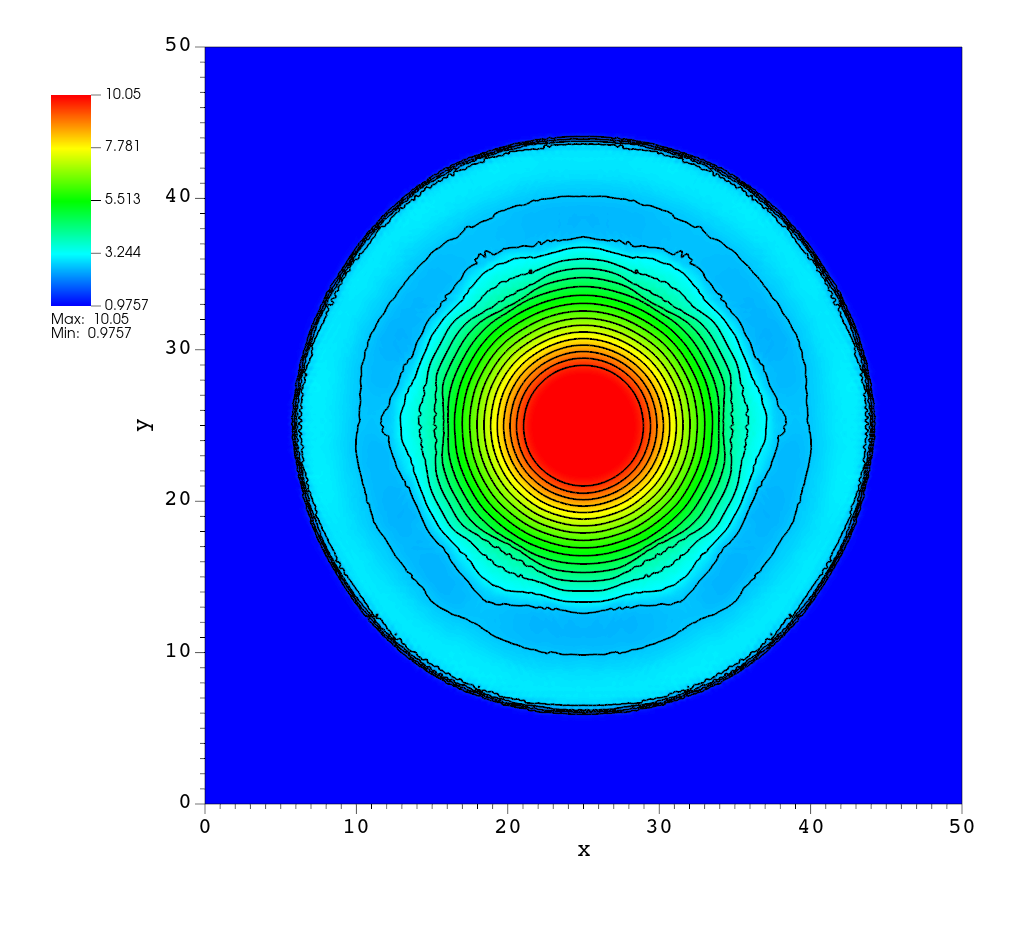}}}
\vskip5pt
\centerline{\subfigure[flag on elements at $t=0.69$]{\includegraphics[trim=1.8cm 2.25cm 1.1cm 1.2cm,clip,width=5.0cm]{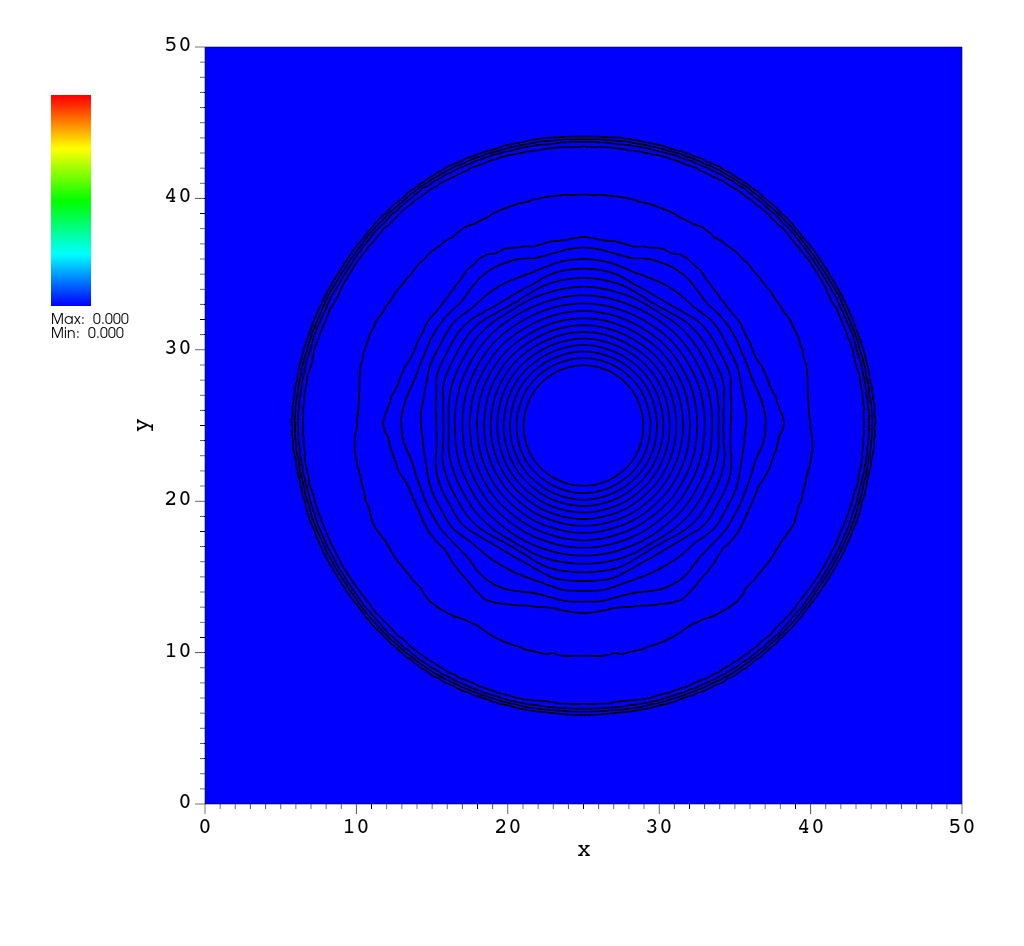}}\hspace*{0.15cm}
	\subfigure[flag on DoFs at $t=0.69$]{\includegraphics[trim=1.8cm 2.25cm 1.1cm 1.2cm,clip,width=5.0cm]{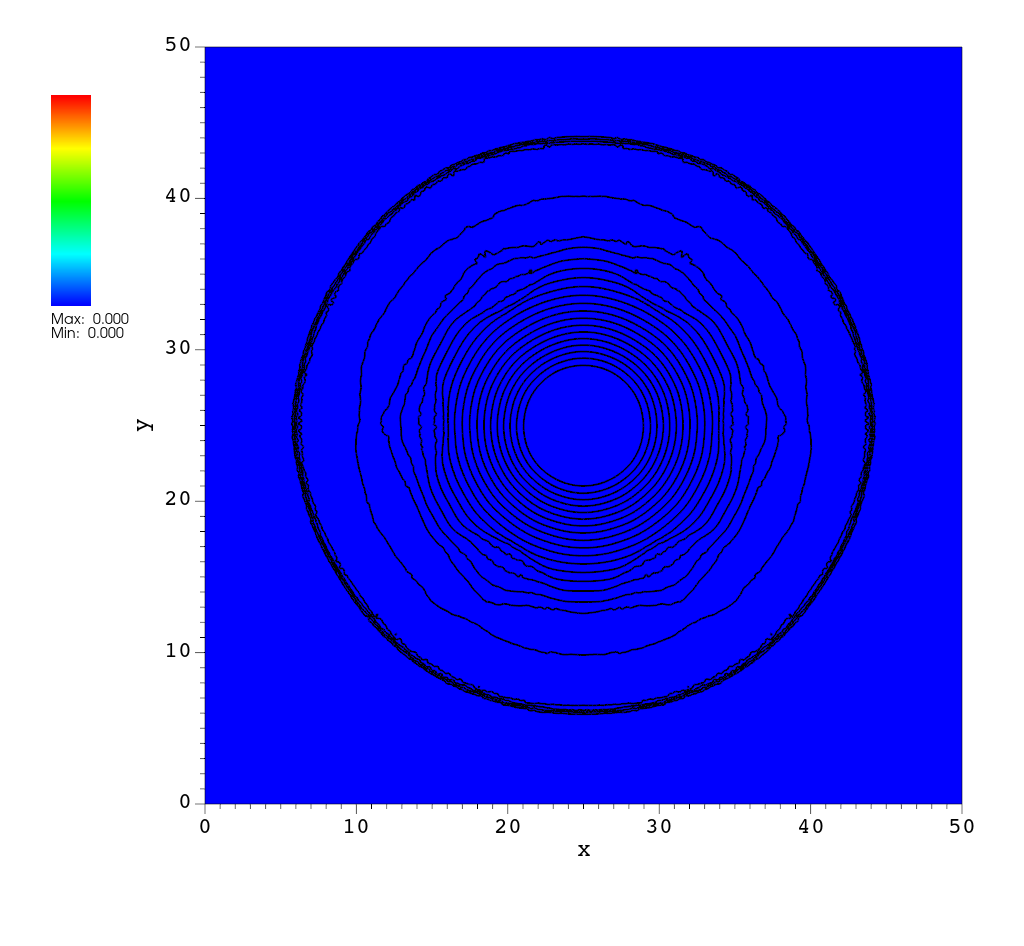}}}
\caption{\sf Example 5: Cell averages and point values of water depth (top row) and flags at the positions where the first-order scheme is used (bottom row) at $t=0.69$.\label{Ex5_h2}}
\end{figure}

\vla{In the bottom row of Figures \ref{Ex5_h} and \ref{Ex2_h1}, we also plot the regions where the first-order scheme is activated at the corresponding time. From the results at $t=0.447$, it is evident that the first-order schemes for average and point values are activated in a few elements around the discontinuities. We can also observe that at the final time $t=0.69$, the limiter is not active for both average and point values evolution, enough numerical dissipation has been produced by the scheme. The behavior of the limiter changes in time and in space, in Figure \ref{Ex5_flag2} we show how the number of limiters changes in time.}

\begin{figure}[ht!]
\centerline{\includegraphics[trim=0.02cm 0.04cm 0.5cm 0.2cm,clip,width=6.0cm]{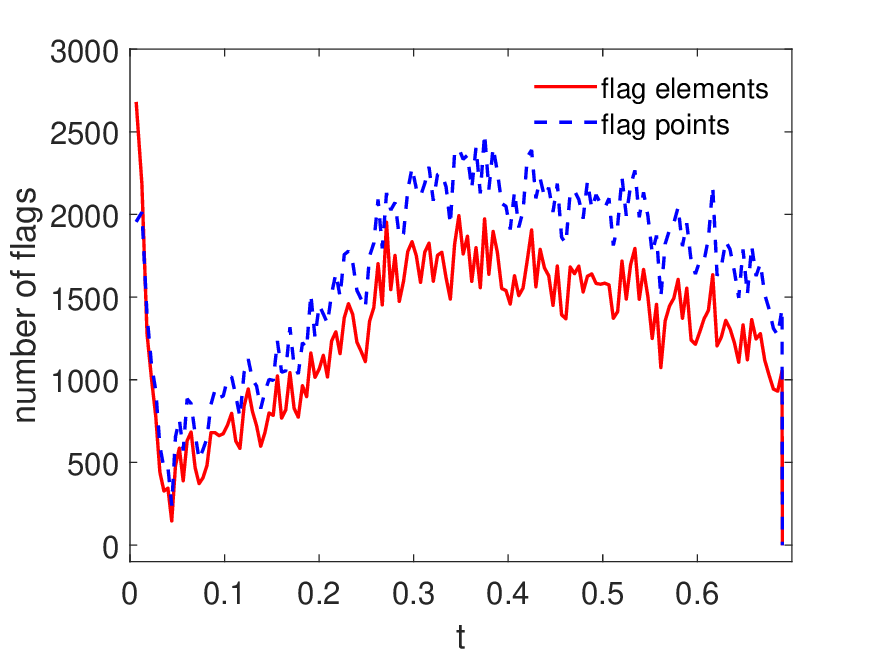}}
\caption{\sf Example 5: Number of flagged elements and points as a function of time $t$.\label{Ex5_flag2}}
\end{figure}

\vla{\subsection*{Example 6---Isobaric steady state}
In the sixth example, we consider a flat bottom topography (i.e., $Z(\mbf x)=0$) and the following initial condition:
\begin{equation*}
  (h,\bm\nu,\theta)^T(\mbf x,0)=\left\{
  \begin{aligned}
  &(2e^{-0.05\Vert\mbf x-30\Vert^2},\mbf 0, e^{0.1\Vert\mbf x-30\Vert^2})^T,&&\Vert\mbf x-30\Vert^2\leq 16\\
  &(2e^{-0.8},\mbf 0, e^{1.6})^T, && \Vert\mbf x-30\Vert^2>16,
  \end{aligned}\right.
\end{equation*}
which satisfies the isobaric steady state \eqref{1.2b}. We compute the solution up to $t=2$ using triangular meshes with $3718$ elements and $7597$ DoFs over the domain $[0,40]\times[0,40]$.  From \cref{alg:Gaussian_integrals} and \cref{prop:WB}, we know that the PAMPA scheme is WB if the 3-point Gauss-Lobatto quadrature rule is applied to compute the edge integral \eqref{2.7} on the local plateau bottom region. In contrast, the non-WB scheme is set directly using the 5-point Gauss--Legendre quadrature rule for the edge integral. The results, reported in Tables \ref{Ex6:tab1} and \ref{Ex6:tab2}, clearly demonstrate that the WB PAMPA scheme preserves the isobaric equilibrium within machine accuracy, whereas the non-WB scheme fails. This further confirms the effectiveness of the adapted quadrature rule selection procedure outlined in Algorithm \ref{alg:Gaussian_integrals}.
 
\begin{table}[ht!]
\caption{\sf Example 6: $L^1$- and $L^\infty$-errors in $\mbf u_E$ for the isobaric state obtained by WB PAMPA scheme and its non-WB counterpart.\label{Ex6:tab1}}
\begin{center}
\begin{tabular}{|c| c c| c c| }\hline
\multicolumn{1}{|c|}{\multirow{2}{*}{Schemes}}  &\multicolumn{2}{c|}{$h$} &\multicolumn{2}{c|}{$h\theta$} \\ \cline{2-5}
  & $L^1$-error   & $L^\infty$-error  & $L^1$-error  &$L^\infty$-error  \\ \hline
 \multicolumn{1}{|c|}{\multirow{1}{*}{WB}}  &$2.52\times10^{-18}$ & $4.44\times10^{-16}$ & $2.32\times10^{-17}$ & $9.77\times10^{-15}$  \\  \hline
 \multicolumn{1}{|c|}{\multirow{1}{*}{non-WB}}  &$3.62\times10^{-5}$ & $4.33\times10^{-4}$ & $2.42\times10^{-5}$& $5.05\times10^{-3}$\\ \hline
\end{tabular}
\end{center}
\end{table}

\begin{table}[ht!]
\caption{\sf Example 6: Same as in Table \ref{Ex6:tab1} but for point values $\mbf u_\sigma$.\label{Ex6:tab2}}
\begin{center}
\begin{tabular}{|c| c c| c c| }\hline
\multicolumn{1}{|c|}{\multirow{2}{*}{Schemes}}  &\multicolumn{2}{c|}{$h$} &\multicolumn{2}{c|}{$h\theta$} \\ \cline{2-5}
  & $L^1$-error   & $L^\infty$-error  & $L^1$-error  &$L^\infty$-error  \\ \hline
 \multicolumn{1}{|c|}{\multirow{1}{*}{WB}}  &$2.33\times10^{-17}$ & $4.44\times10^{-16}$ & $1.49\times10^{-16}$ & $1.78\times10^{-15}$  \\  \hline
 \multicolumn{1}{|c|}{\multirow{1}{*}{non-WB}}  &$5.64\times10^{-5}$ & $1.06\times10^{-3}$ & $2.42\times10^{-5}$& $5.05\times10^{-3}$\\ \hline
\end{tabular}
\end{center}
\end{table}

To further demonstrate the effectiveness of the WB PAMPA scheme in handling isobaric equilibrium, we study the propagation of a small perturbation. To this end, a small perturbation of $0.005e^{-0.2\Vert\mbf x-20\Vert^2}$ is superimposed onto the initial water depth. Figure \ref{Ex6_h1} displays the difference between the numerically computed and background (steady-state) water depth averages at time $t=4.3$, when the perturbation begins interacting with the non-zero water depth and temperature region. Outside this region, the WB PAMPA and non-WB schemes exhibit similar behavior. However, as the wavefront enters, the non-WB scheme produces unphysical oscillations, whereas the WB PAMPA scheme remains free of spurious artifacts ahead of the wavefront. This distinction is also evident at time $t=6$, as shown in Figure \ref{Ex6_h2}. These differences persist despite the use of a fine mesh, highlighting the robustness of the WB PAMPA scheme. 

\begin{figure}[ht!]
\centerline{\subfigure[WB PAMPA]{\includegraphics[trim=0.2cm 1.5cm 2.0cm 1.0cm,clip,width=5.0cm]{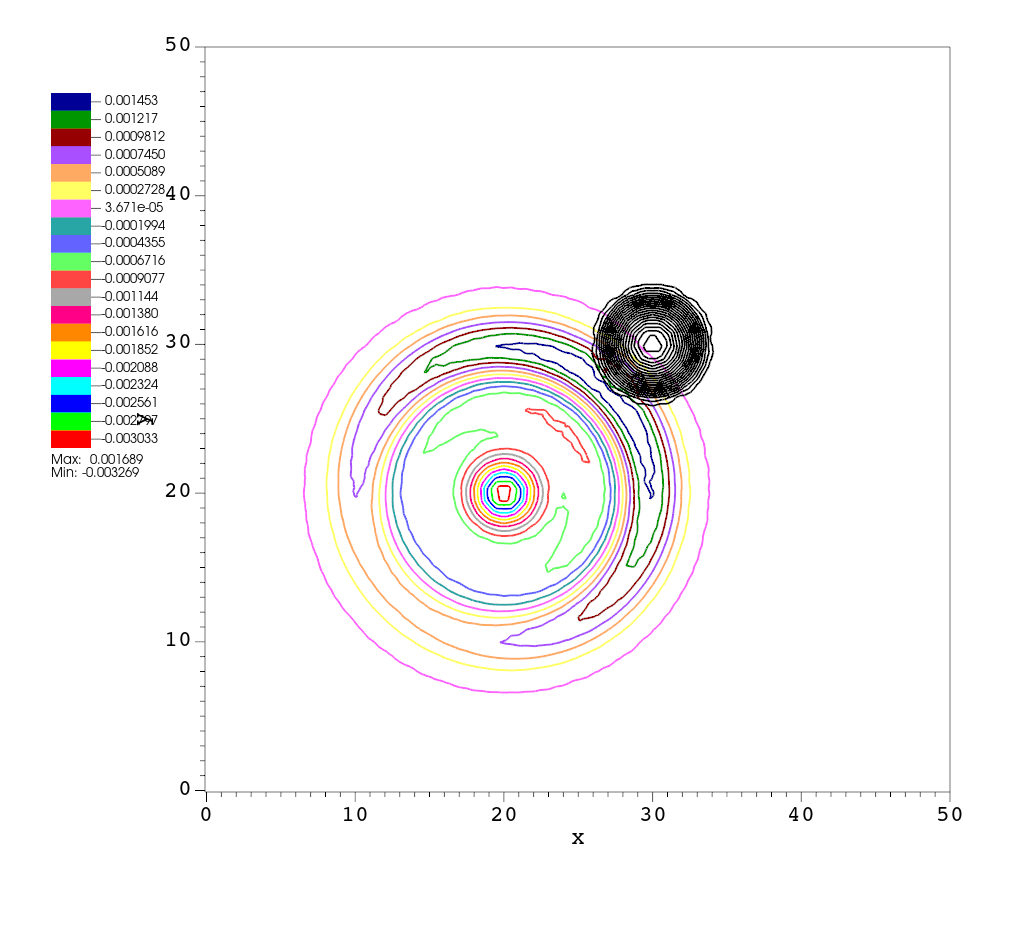}}\hspace*{0.5cm}
	\subfigure[non WB]{\includegraphics[trim=0.2cm 1.5cm 2.0cm 1.0cm,clip,width=5.0cm]{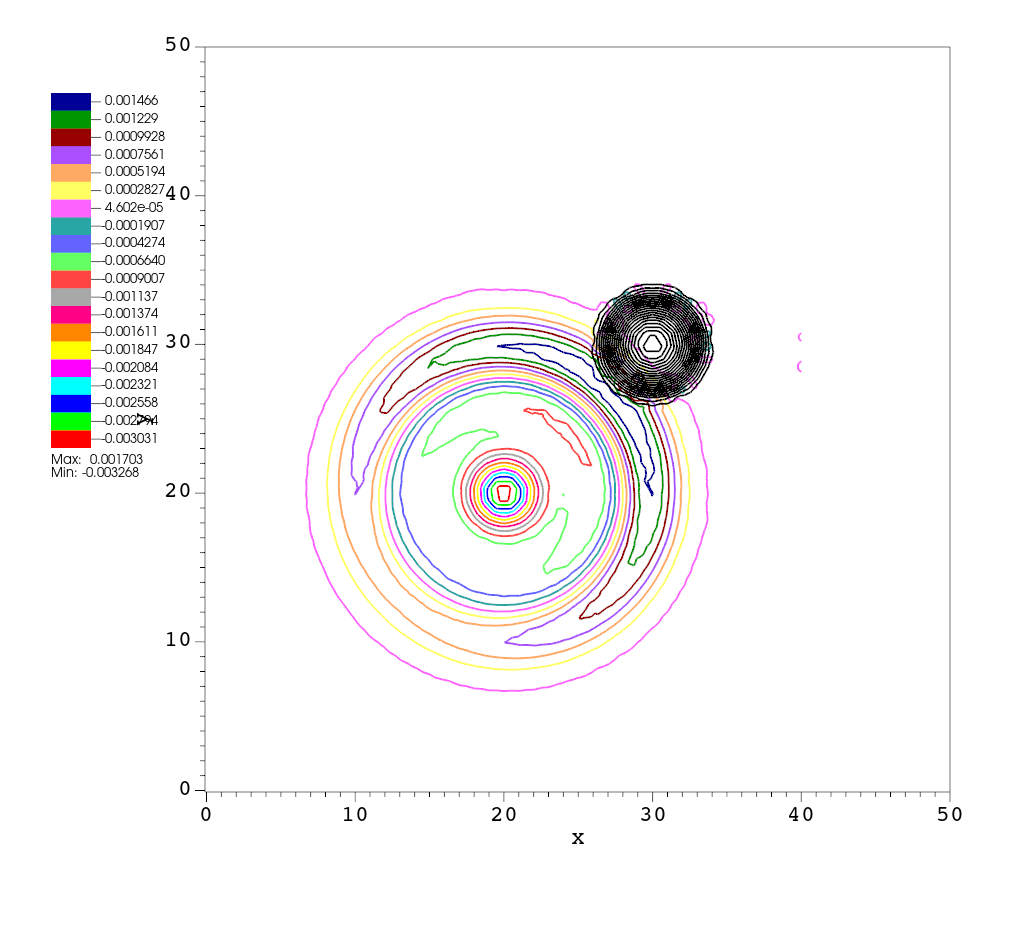}}}
\caption{\sf Example 6: Perturbation in the cell averages of water depth at $t=4.3$ computed by the WB PAMPA (left) and non well-balanced (right) schemes. The black contour lines stand for the initial non-zero values of $h$.\label{Ex6_h1}}
\end{figure}

\begin{figure}[ht!]
\centerline{\subfigure[WB PAMPA]{\includegraphics[trim=0.2cm 1.5cm 2.0cm 1.0cm,clip,width=5.0cm]{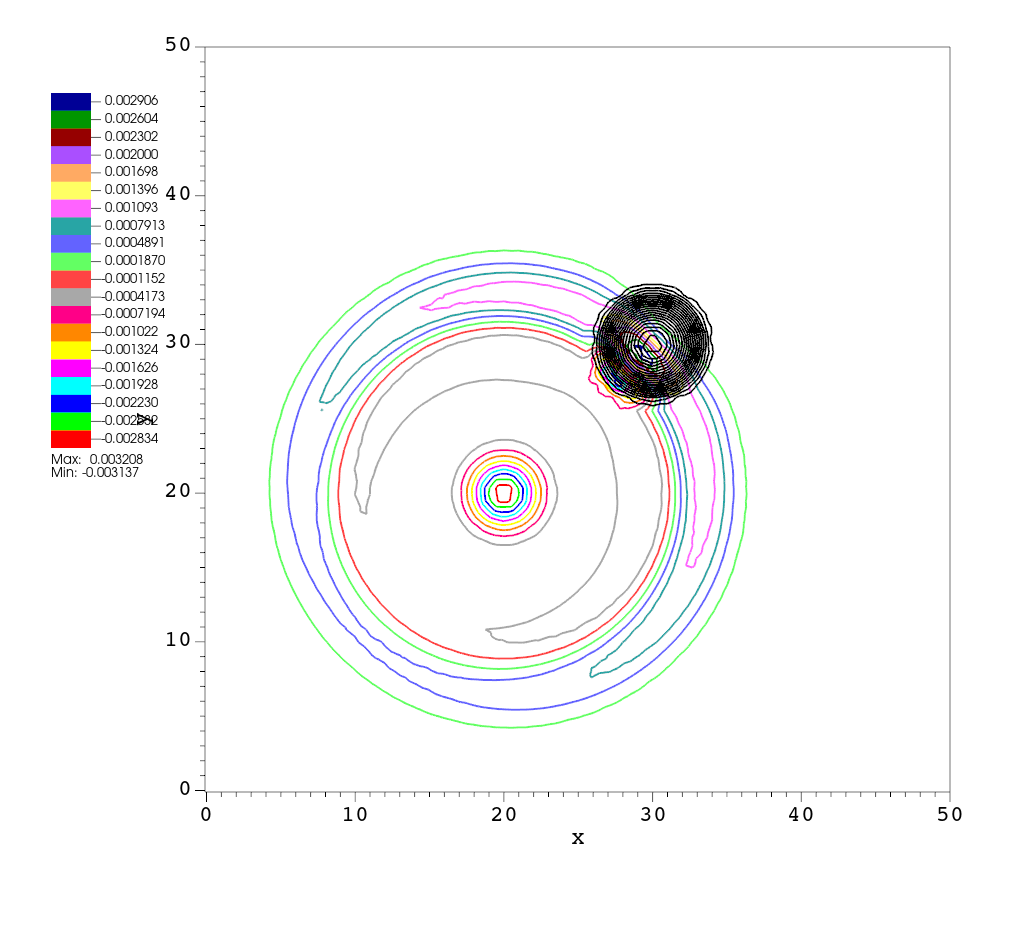}}\hspace*{0.5cm}
	\subfigure[non WB]{\includegraphics[trim=0.2cm 1.5cm 2.0cm 1.0cm,clip,width=5.0cm]{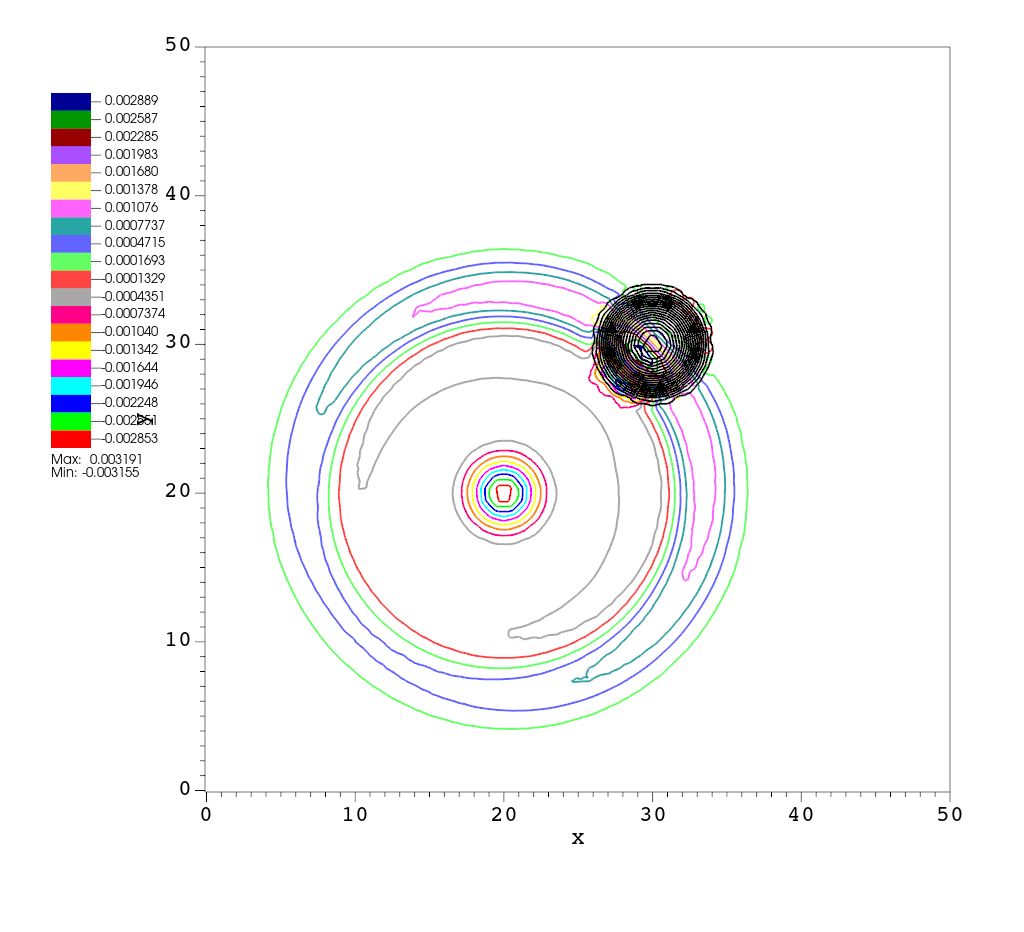}}}
\caption{\sf Example 6: Same as in Figure \ref{Ex6_h1} but for $t=6$.\label{Ex6_h2}}
\end{figure}

\subsection*{Example 7---Two isolated ``lake at rest'' states}
In the seventh example, taken from  \cite{CKL}, we consider the following initial data consisting of two ``lake at rest'' states connected through the temperature jump:
\begin{equation*}
  (h,\bm\nu,\theta)^T(\mbf x,0)=\left\{
  \begin{aligned}
  &(3-Z,\mbf 0, \frac{4}{3})^T,& &\Vert\mbf x\Vert\leq0.5,\\
  &(2-Z,\mbf 0,3)^T,& &\Vert\mbf x\Vert>0.5,
  \end{aligned}\right.
\end{equation*}
where $Z$ is a non-flat bottom topography given by
\begin{equation*}
  Z(\mbf x)=\left\{
  \begin{aligned}
  &0.5e^{-100\Vert\mbf x+0.5\Vert^2},& &x<0,\\
  &0.6e^{-100\Vert\mbf x-0.5\Vert^2},& &x>0.
  \end{aligned}\right.
\end{equation*}

The solution is computed using the WB PAMPA scheme on a triangular mesh with $9524$ elements and $18761$ DoFs up to a final time of $t=0.12$. Figure \ref{Ex7_thetap} presents the solutions of temperature and pressure, where we define $\theta_E:=\xbar{(h\theta)}_E/\xbar h_E$ and $p_E=\xbar{h}_E\xbar{(h\theta)}_E$. As shown, the results obtained with fewer computational cells are highly consistent with those computed using the complex interface tracking technique in \cite{CKL} and outperform their version without this technique.

\begin{figure}[ht!]
\centerline{\subfigure[$\theta_E$]{\includegraphics[trim=1.8cm 2.25cm 1.1cm 1.2cm,clip,width=5.0cm]{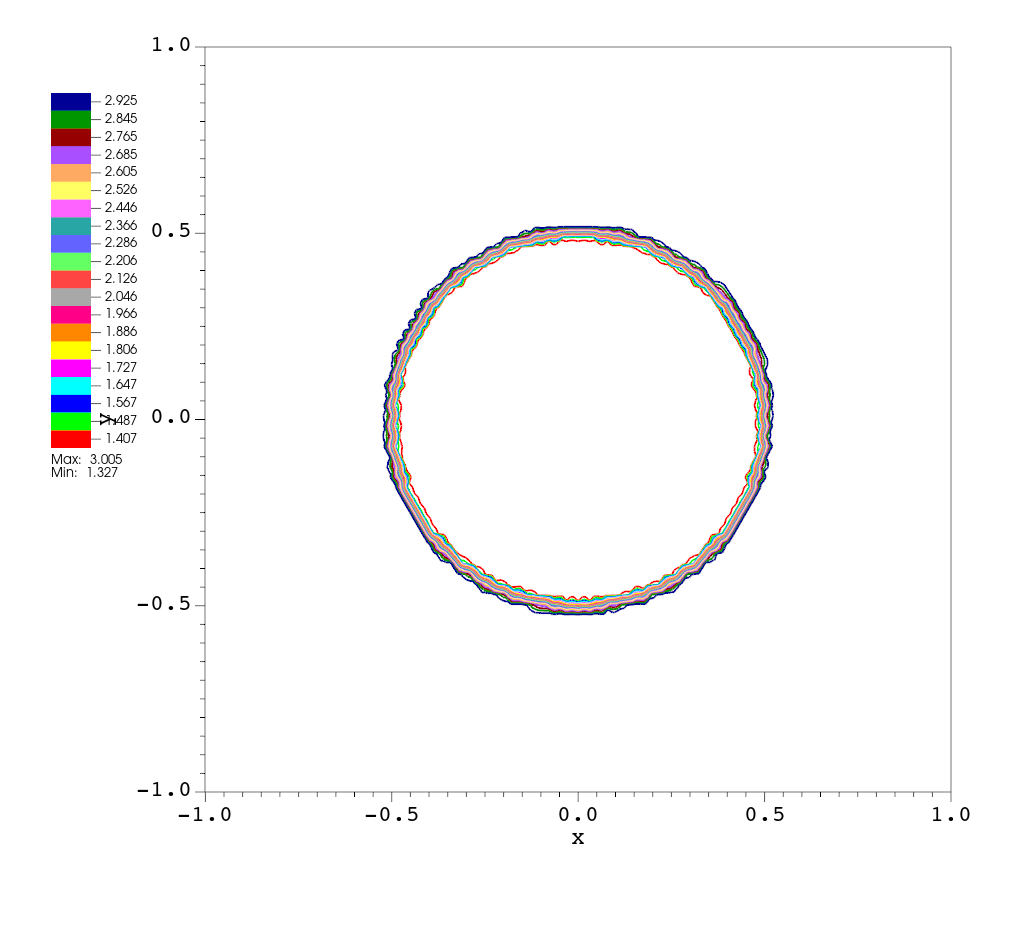}}\hspace*{0.15cm}
	\subfigure[$\theta_\sigma$]{\includegraphics[trim=1.8cm 2.25cm 1.1cm 1.2cm,clip,width=5.0cm]{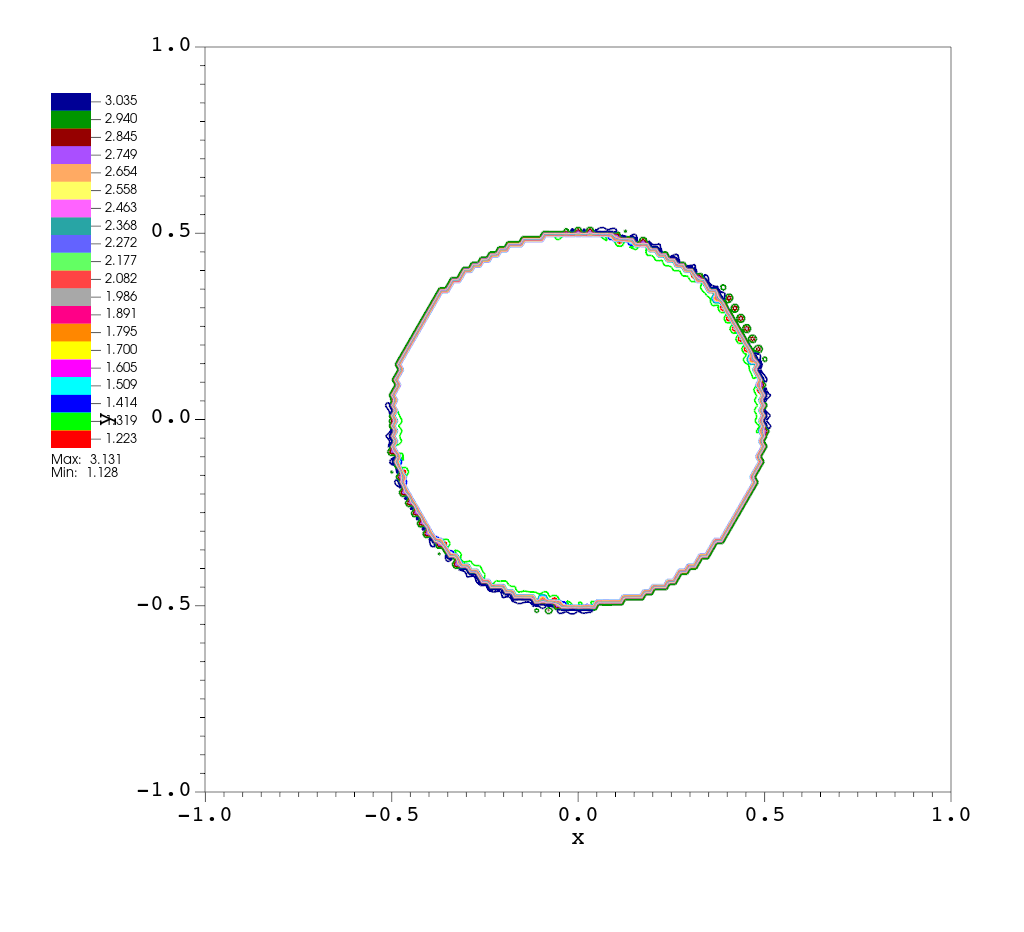}}}
\vskip5pt
\centerline{\subfigure[$p_E$]{\includegraphics[trim=1.8cm 2.25cm 1.1cm 1.2cm,clip,width=5.0cm]{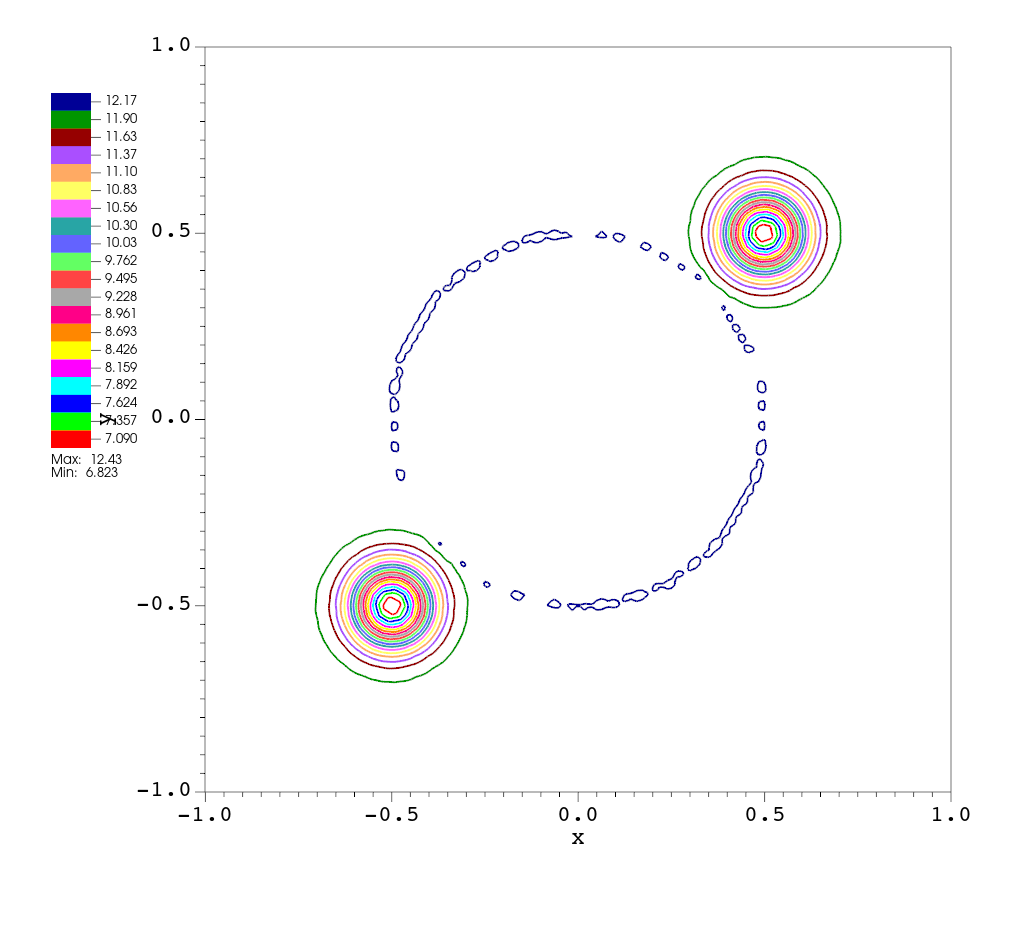}}\hspace*{0.15cm}
	\subfigure[$p_\sigma$]{\includegraphics[trim=1.8cm 2.25cm 1.1cm 1.2cm,clip,width=5.0cm]{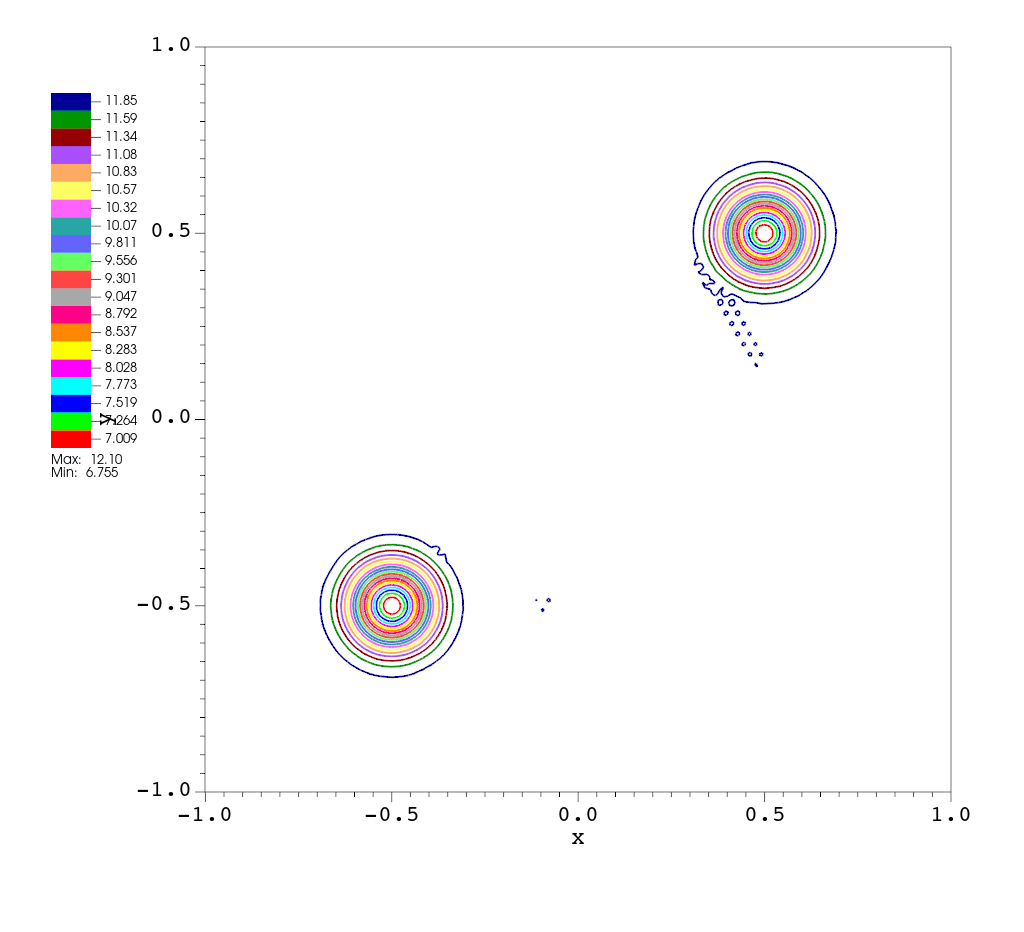}}}
\caption{\sf Example 7: Solutions of temperature (top row) and pressure (bottom row) at $t=0.12$.\label{Ex7_thetap}}
\end{figure}

\subsection*{Example 8---Small perturbation of two isolated ``lake at rest'' states}
In the eighth example, taken from \cite{CKL}, we use the same setting as in Example 7 and take the following initial condition:
\begin{equation*}
  (h,\bm\nu,\theta)^T(\mbf x,0)=\left\{
  \begin{aligned}
  &(3.1-Z,\mbf 0, \frac{4}{3})^T,& &0.1\leq\Vert\mbf x\Vert\leq0.3,\\
  &(3-Z,\mbf 0, \frac{4}{3})^T,& &0.3<\Vert\mbf x\Vert\leq0.5,\\
  &(2-Z,\mbf 0,3)^T,& &\Vert\mbf x\Vert>0.5,
  \end{aligned}\right.
\end{equation*}
where a perturbation is added inside a small annulus near the center of the computational domain. The results are presented in Figure \ref{Ex8_thetap}, where the appearance of numerical oscillations due to the sensitive of setting the MOOD criteria. A further study is needed to reduce these oscillations.

\begin{figure}[ht!]
\centerline{\subfigure[$\theta_E$]{\includegraphics[trim=1.8cm 2.25cm 1.1cm 1.2cm,clip,width=5.0cm]{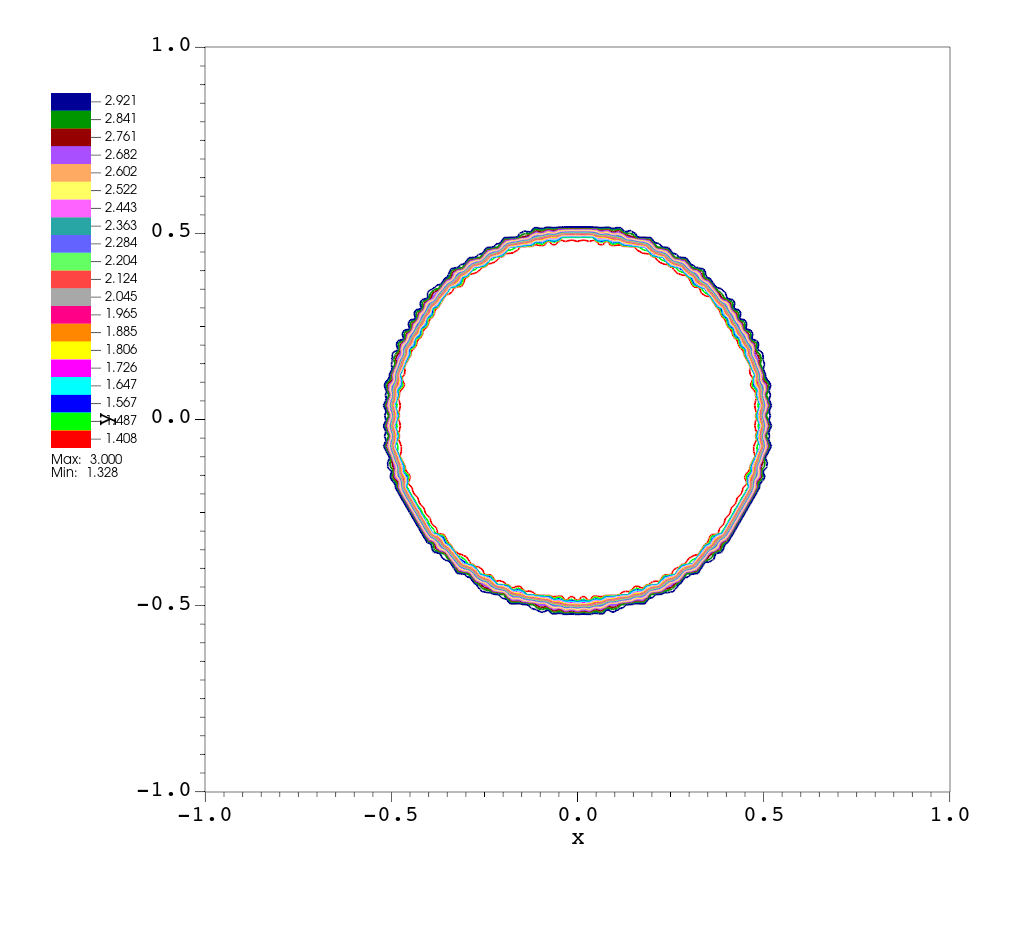}}\hspace*{0.15cm}
	\subfigure[$\theta_\sigma$]{\includegraphics[trim=1.8cm 2.25cm 1.1cm 1.2cm,clip,width=5.0cm]{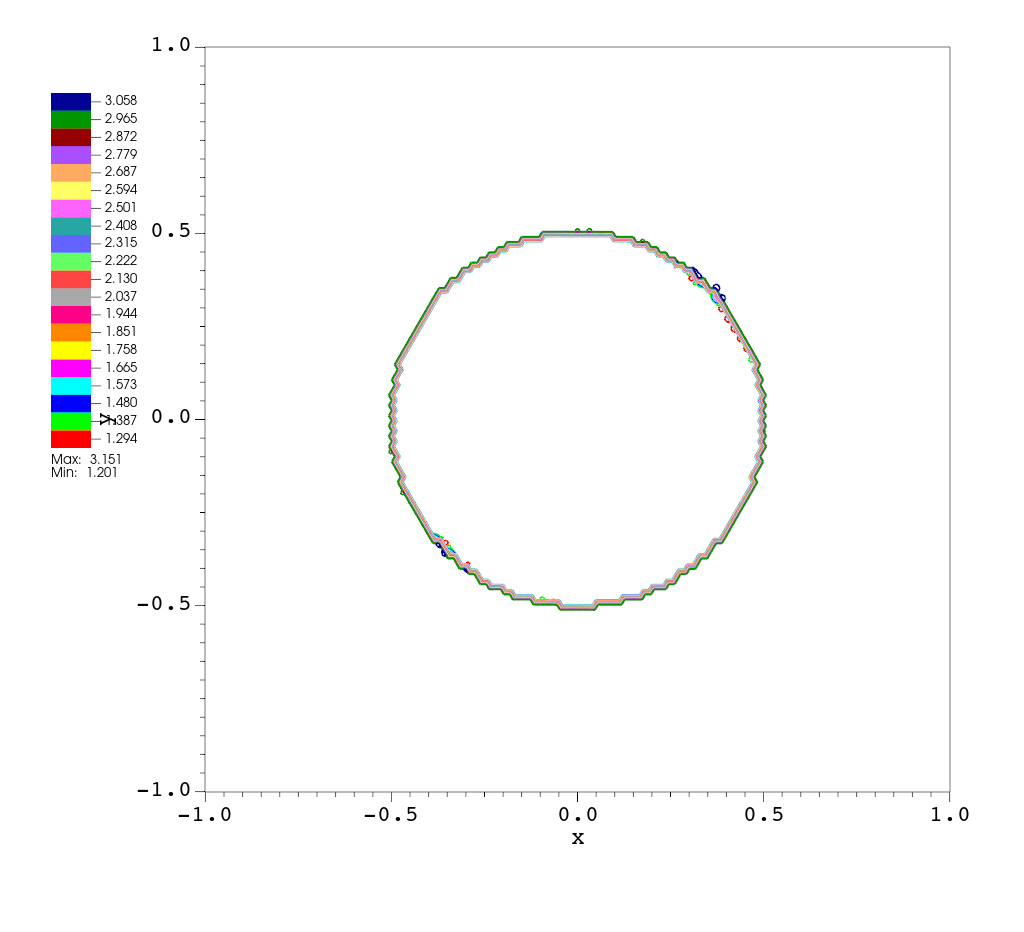}}}
\vskip5pt
\centerline{\subfigure[$p_E$]{\includegraphics[trim=1.8cm 2.25cm 1.1cm 1.2cm,clip,width=5.0cm]{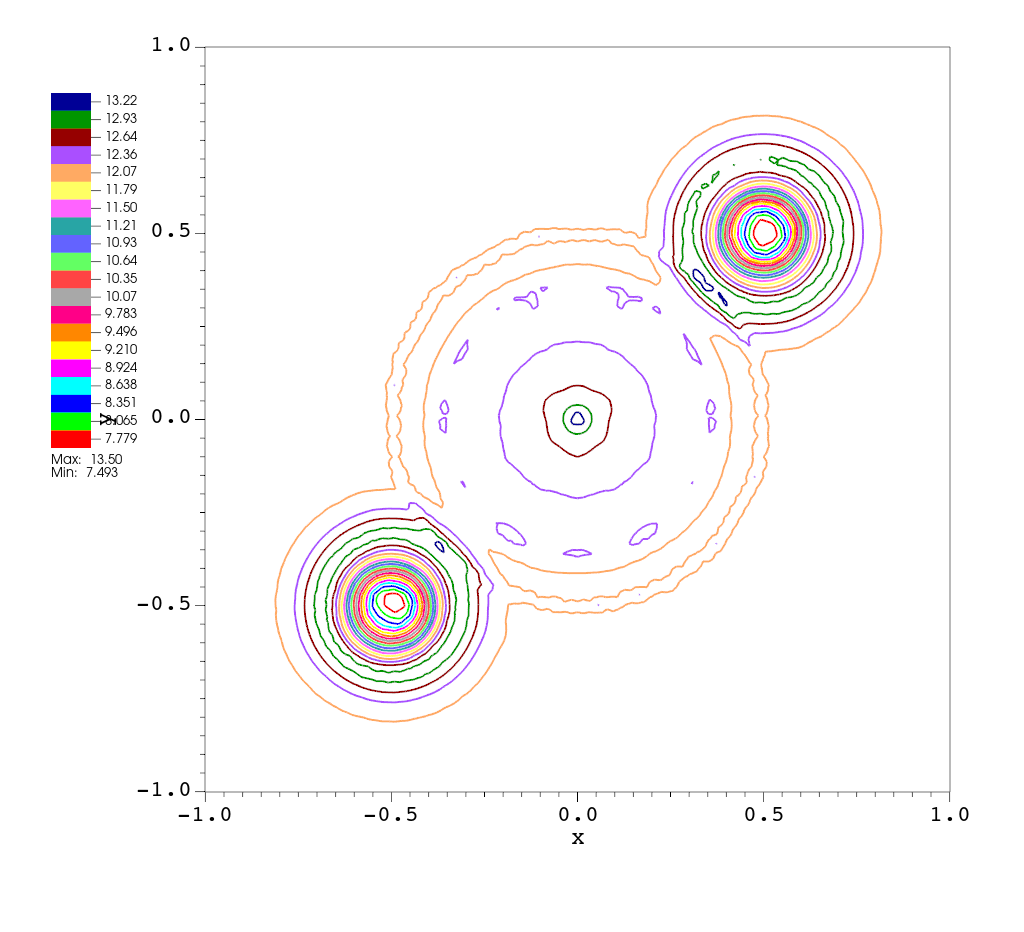}}\hspace*{0.15cm}
	\subfigure[$p_\sigma$]{\includegraphics[trim=1.8cm 2.25cm 1.1cm 1.2cm,clip,width=5.0cm]{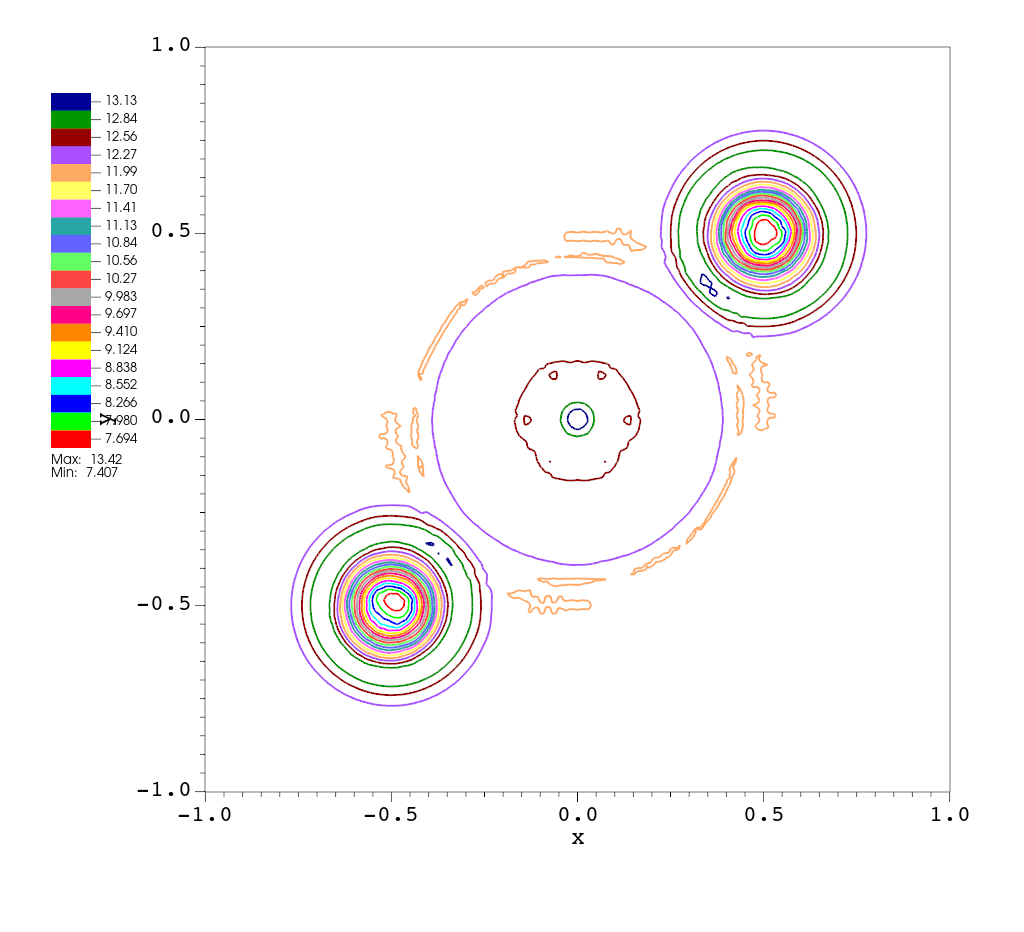}}}
\caption{\sf Example 8: Solutions of temperature (top row) and pressure (bottom row) at $t=0.05$.\label{Ex8_thetap}}
\end{figure}

\subsection*{Example 9---Radial dam break with non-constant temperature}
In the final example, we consider a radial dam break problem with a flat bottom topography (i.e., $Z(\mbf x)=0$). The initial condition is given by (taken from \cite{CKL}):
\begin{equation*}
  (h,\bm\nu,\theta)^T(\mbf x,0)=\left\{
  \begin{aligned}
  &(2,\mbf 0, 1)^T,& &\Vert\mbf x\Vert\leq0.5,\\
  &(1,\mbf 0, 1.5)^T,& &\Vert\mbf x\Vert>0.5.
  \end{aligned}\right.
\end{equation*}
We compute the solution using the WB PAMPA scheme on a triangular mesh with $9524$ elements and $18761$ DoFs until a final time $t=0.15$. The results for temperature and pressure are plotted in Figure \ref{Ex9_thetap}, which comparable with those reported in \cite{CKL}.
\begin{figure}[ht!]
\centerline{\subfigure[$\theta_E$]{\includegraphics[trim=1.8cm 2.25cm 1.1cm 1.2cm,clip,width=5.0cm]{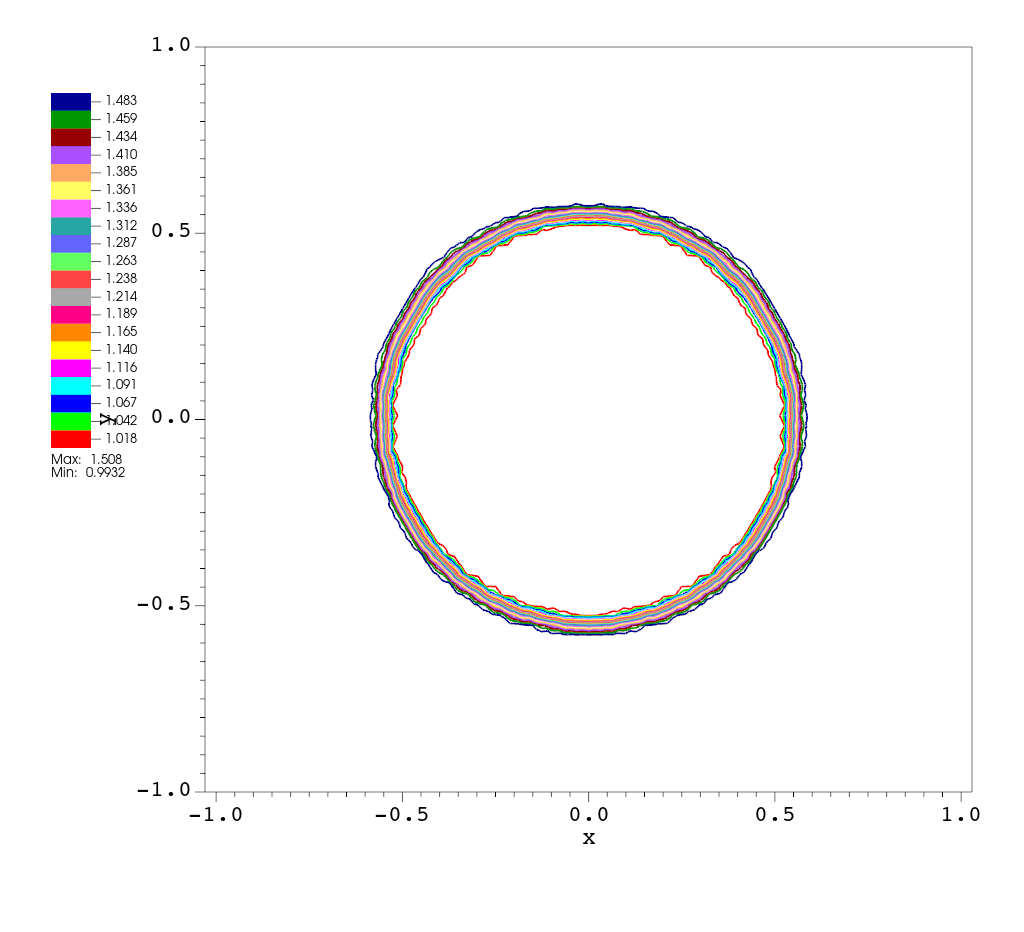}}\hspace*{0.15cm}
	\subfigure[$\theta_\sigma$]{\includegraphics[trim=1.8cm 2.25cm 1.1cm 1.2cm,clip,width=5.0cm]{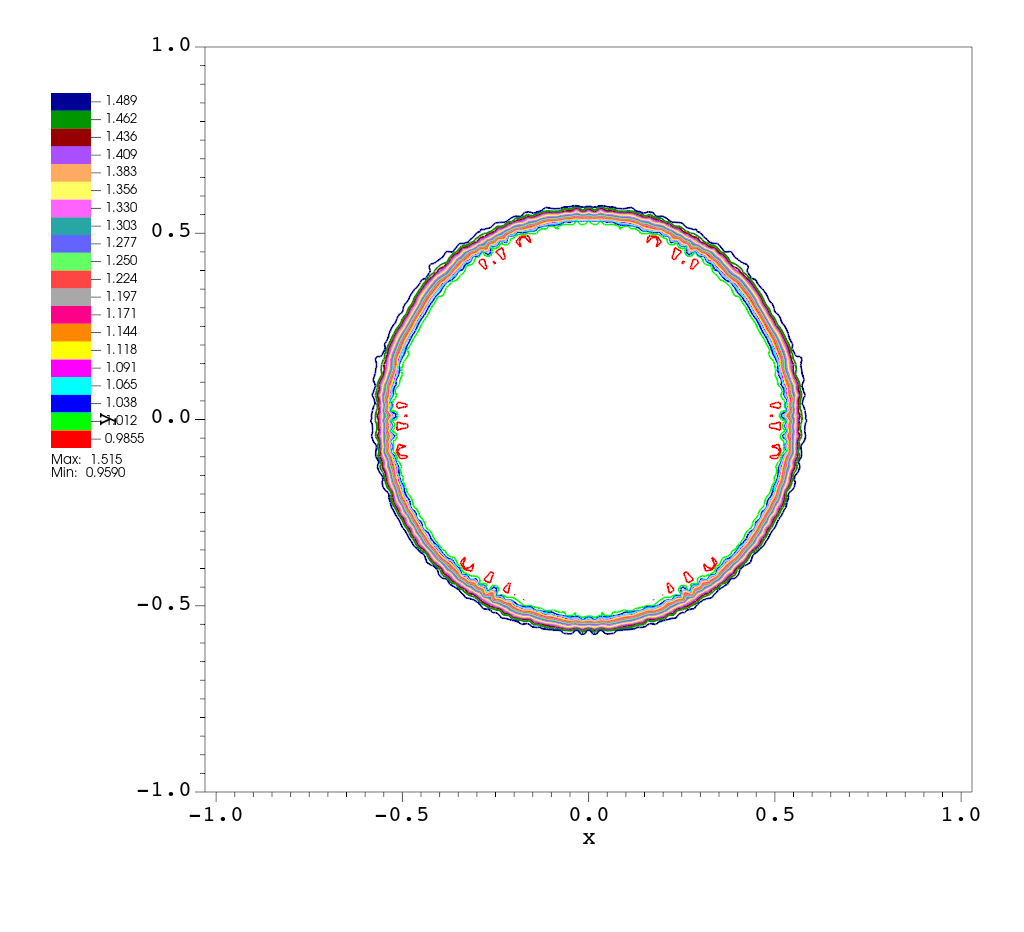}}}
\vskip5pt
\centerline{\subfigure[$p_E$]{\includegraphics[trim=1.8cm 2.25cm 1.1cm 1.2cm,clip,width=5.0cm]{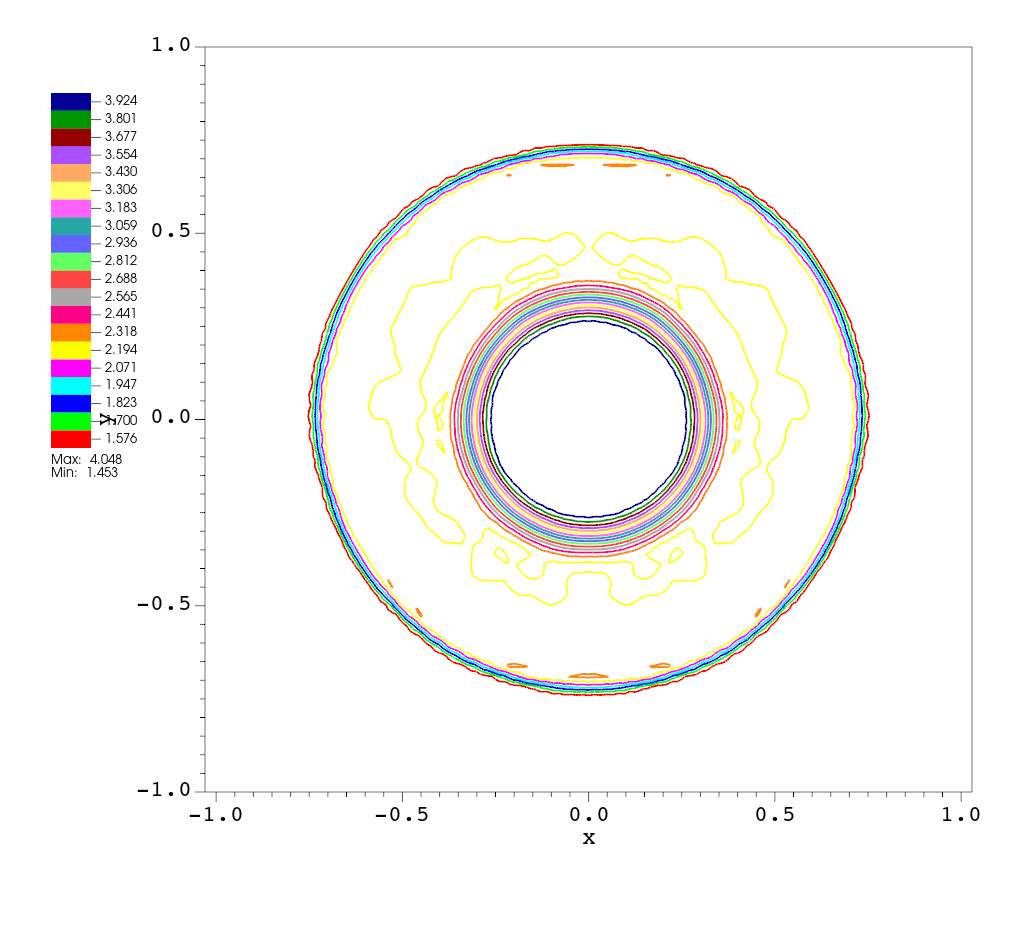}}\hspace*{0.15cm}
	\subfigure[$p_\sigma$]{\includegraphics[trim=1.8cm 2.25cm 1.1cm 1.2cm,clip,width=5.0cm]{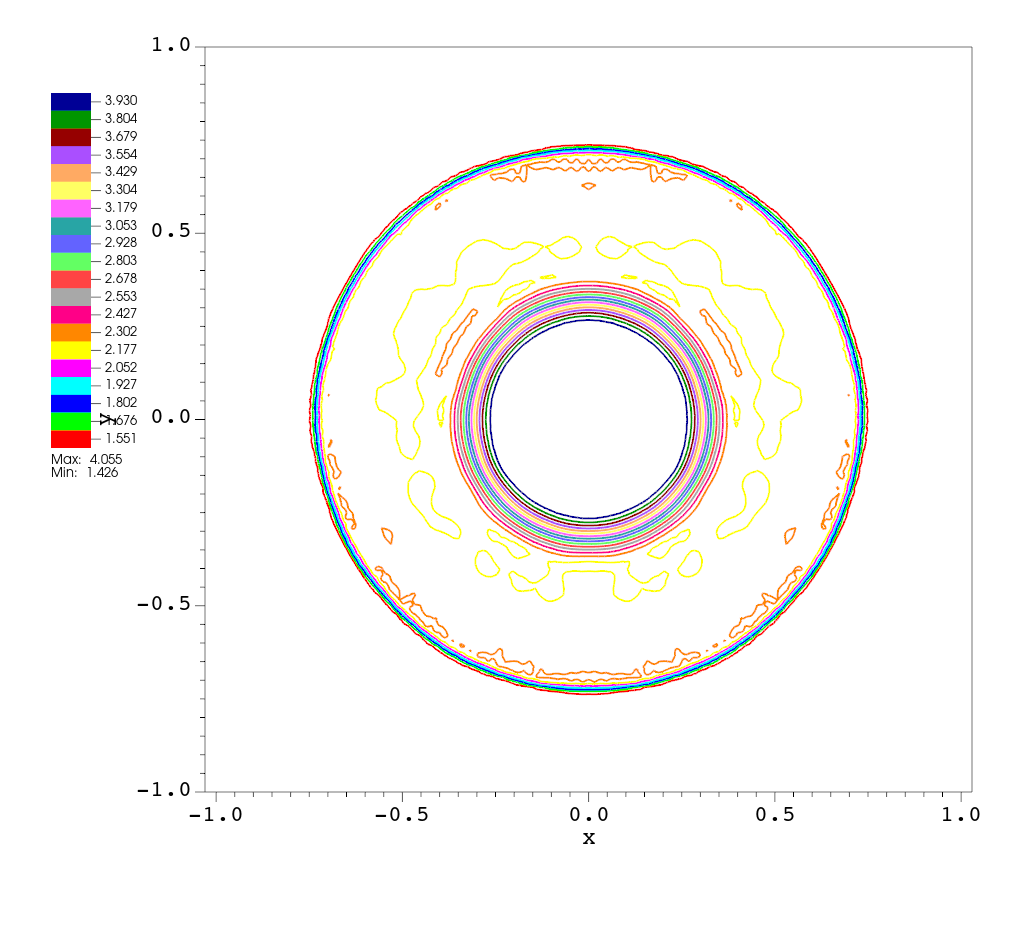}}}
\caption{\sf Example 9: Solutions of temperature (top row) and pressure (bottom row) at $t=0.15$.\label{Ex9_thetap}}
\end{figure}
}

\section{Conclusion}\label{sec5}
In this paper, we have presented a new well-balanced PAMPA method on unstructured triangular meshes for solving the two-dimensional \vla{shallow water equations with temperature gradients}. In the proposed method, the solution is a globally continuous representation of cell average within each triangular element, and point values located on the boundaries of these elements. The conservative property is recovered by evolving the cell averages in a conservative form, while there is great flexibility in handling the evolution of the point values, which is governed by a non-conservative formulation of the same PDEs. \vla{We have demonstrated that using standard conservative and primitive variables in the non-conservative form can preserve only the ``lake at rest'' equilibrium. In order to preserve the isobaric steady state simultaneously, we introduce pressure-momentum-temperature variables.} We have shown that the studied scheme based on the newly introduced variables are well-balanced. Additionally, we have demonstrated that the proposed schemes are positivity-preserving by using the MOOD paradigm and the positivity-preserving first-order schemes. Several numerical examples with smooth and discontinuous problems have been tested to illustrate the performance of the proposed well-balanced PAMPA method. The results are as expected. 

In future work, we plan to extend the proposed PAMPA method to include higher-order schemes for enhance accuracy, \bla{a-priori monolithic convex limiting approaches to further improve efficiency based on the one-dimensional work \cite{BP_PAMPA_1D}}, and applications in more realistic and complex hyperbolic models, such as the thermal rotating shallow water equations. In this model, \vla{Coriolis forces are introduced, leading to a new class of equilibrium of particular interest---the so-called (thermo) geostrophic equilibrium. Accurately capturing these steady states is both physically significant and computationally challenging}.

\section*{Acknowledgement}
I am deeply grateful to Prof. R\'{e}mi Abgrall for our fruitful discussions on the high-order spatial discretizations for non-conservative formulations. The author also sincerely thank the two anonymous Reviewers for their constructive comments and very careful reading of the manuscript.

\bibliographystyle{siamplain}
\bibliography{reference_YL}
\end{document}